\numberwithin{equation}{section}
\newtheorem{thm}{Theorem}[section]
\newtheorem{lem}[thm]{Lemma}
\newtheorem{prop}[thm]{Proposition}
\newtheorem{cor}[thm]{Corollary}
\theoremstyle{definition}
\newtheorem{defn}[thm]{Definition}
\newtheorem{remark}[thm]{Remark}
\newcommand\wg{\widetilde{g}}
\newcommand\wb{\widetilde{\beta}}
\newcommand\ove{\overline{e}}
\newcommand\ovh{\overline{h}}
\newcommand\ovH{\overline{H}}
\newcommand\col{\operatorname{col}}
\newcommand\row{\operatorname{row}}
\newcommand\ZZ{\mathbb{Z}}
\newcommand\NN{\mathbb{N}}
\newcommand\Par{\operatorname{Par}}
\newcommand\RPar{\operatorname{RPar}}
\newcommand\QPar{\operatorname{QPar}}
\newcommand\RPP{\operatorname{RPP}}
\newcommand\lm{{\lambda/\mu}}
\newcommand\wt{\operatorname{wt}}
\newcommand\R{\mathcal{R}}
\newcommand\oR{\overline{\R}}
\newcommand\EE{\mathcal{E}}
\newcommand\HH{\mathcal{H}}
\newcommand\owt{\overline{\wt}}
\title[Jacobi--Trudi formulas for flagged dual Grothendieck
polynomials]{Jacobi--Trudi formulas for flagged refined dual stable Grothendieck
  polynomials}
\author{Jang Soo Kim}
\thanks{} 
\address{Department of Mathematics,
Sungkyunkwan University (SKKU), Suwon, Gyeonggi-do 16419, South Korea}
\email{jangsookim@skku.edu}
\begin{document}

\begin{abstract}
  Recently Galashin, Grinberg, and Liu introduced the refined dual stable
  Grothendieck polynomials, which are symmetric functions in $x=(x_1,x_2,\dots)$
  with additional parameters $t=(t_1,t_2,\dots)$. The refined dual stable
  Grothendieck polynomials are defined as a generating function for reverse
  plane partitions of a given shape. They interpolate between Schur functions
  and dual stable Grothendieck polynomials introduced by Lam and Pylyavskyy in
  2007. Flagged refined dual stable Grothendieck polynomials are a more refined
  version of refined dual stable Grothendieck polynomials, where lower and upper
  bounds are given for the entries of each row or column. In this paper
  Jacobi--Trudi-type formulas for flagged refined dual stable Grothendieck
  polynomials are proved using plethystic substitution. This resolves a
  conjecture of Grinberg and generalizes a result by Iwao and
  Amanov--Yeliussizov.
\end{abstract}

\maketitle

\section{Introduction}

The (skew) Schur functions $s_{\lm}(x)$ are a central object in algebraic
combinatorics. They are symmetric functions in the variables $x=(x_1,x_2,\dots)$
and can be defined combinatorially as a generating function for semistandard
Young tableaux of shape $\lm$. The Jacobi--Trudi formula and its dual formula
express $s_{\lm}(x)$ as a determinant in terms of the complete homogeneous
symmetric functions $h_k(x)$ and the elementary symmetric functions $e_k(x)$,
respectively:
\begin{align}
  \label{eq:s=det h}
  s_{\lambda/\mu}(x) &= \det
  \left( h_{\lambda_i-\mu_j-i+j} (x) \right)_{1\le i,j\le \ell(\lambda)},\\
  \label{eq:s=det e}
  s_{\lambda'/\mu'}(x) &= \det
  \left( e_{\lambda_i-\mu_j-i+j} (x) \right)_{1\le i,j\le \ell(\lambda)},
\end{align}
where $\ell(\lambda)$ is the number of parts in $\lambda$ and $\lambda'$ is the
transpose of $\lambda$.

The row-flagged and column-flagged Schur functions
$s_{\lambda/\mu}^{\row(\alpha,\beta)}(x)$ and
$s_{\lambda/\mu}^{\col(\alpha,\beta)}(x)$ are defined as a generating function
for semistandard Young tableaux of shape $\lm$ in which entries in each row or
column have lower and upper bounds specified by $\alpha$ and $\beta$. Flagged
Schur functions were introduced by Lascoux and Sch\"utzenberger \cite{LS1982p}
in their study of Schubert polynomials. See \cite{Chen2002, Merzon_2015,
  Wachs_1985} and references therein for more details on flagged Schur
functions. Jacobi--Trudi formulas for flagged Schur functions were discovered by
Gessel \cite{Gessel_unpublished} and Wachs \cite{Wachs_1985}.

\begin{thm}\cite[Theorems~3.5 and 3.5*]{Wachs_1985}
\label{thm:wachs}
Let $\alpha=(\alpha_1,\dots,\alpha_n)$ and $\beta=(\beta_1,\dots,\beta_n)$ be
sequences of nonnegative integers and let $\lambda$ and $\mu$ be partitions with
at most $n$ parts.

If $\alpha_i\le \alpha_{i+1}$ and $\beta_i\le \beta_{i+1}$ whenever
$\mu_i<\lambda_{i+1}$, then
\begin{equation}
  \label{eq:wachs1}
s_{\lambda/\mu}^{\row(\alpha,\beta)}(x)  = \det \left(
    h_{\lambda_i-\mu_j-i+j}(x_{\alpha_j+1},\dots, x_{\beta_i})
  \right)_{1\le i,j\le n},
\end{equation}
where $(x_{\alpha_j+1},\dots, x_{\beta_i})$ is the empty list if $\beta_i \leq
\alpha_j$.

If $\alpha_i-\mu_i\le \alpha_{i+1}-\mu_{i+1}+1$ and $\beta_i-\lambda_i\le
\beta_{i+1}-\lambda_{i+1}+1$ whenever $\mu_i<\lambda_{i+1}$, then
\begin{equation}
 \label{eq:wachs2}
s_{\lambda'/\mu'}^{\col(\alpha,\beta)}(x)  = \det \left(
    e_{\lambda_i-\mu_j-i+j}(x_{\alpha_j+1},\dots, x_{\beta_i})
  \right)_{1\le i,j\le n}.
\end{equation}
\end{thm}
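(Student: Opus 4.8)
The plan is to prove both identities with the Lindström--Gessel--Viennot (LGV) lemma, reading each entry of the Jacobi--Trudi matrix as a weighted count of lattice paths and the determinant as a weighted count of non-intersecting path families. I will carry out \eqref{eq:wachs1} in full, and \eqref{eq:wachs2} will follow from the same method after passing to strict paths.

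First I would set up the path model for the complete homogeneous symmetric functions. Working in $\ZZ^2$ with unit North and East steps and weighting each East step in row $r$ by $x_r$, the function $h_m(x_{a+1},\dots,x_b)$ is the generating function for monotone paths with exactly $m$ East steps all of whose heights lie in $[a+1,b]$, because the heights of the East steps read left to right are precisely the weakly increasing length-$m$ sequences in $[a+1,b]$. Writing the entry exponent as $\lambda_i-\mu_j-i+j=(\lambda_i-i)-(\mu_j-j)$, I would place sources at $S_j=(\mu_j-j,\alpha_j+1)$ and sinks at $T_i=(\lambda_i-i,\beta_i)$, so that the weighted path count from $S_j$ to $T_i$ is exactly the $(i,j)$ entry $h_{\lambda_i-\mu_j-i+j}(x_{\alpha_j+1},\dots,x_{\beta_i})$. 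The LGV lemma then rewrites the determinant as $\sum_{\sigma\in\Sym_n}\operatorname{sgn}(\sigma)$ times the weighted count of non-intersecting families $P_i\colon S_i\to T_{\sigma(i)}$.

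Two things then remain: (i) that only $\sigma=\mathrm{id}$ admits a non-intersecting family, so every surviving term is positive; and (ii) that for $\sigma=\mathrm{id}$, recording the East-step heights of $P_i$ as the $i$-th row gives a weight-preserving bijection between non-intersecting families and semistandard tableaux of shape $\lm$ whose row $i$ entries lie in $[\alpha_i+1,\beta_i]$, i.e.\ the fillings counted by $s_{\lm}^{\row(\alpha,\beta)}(x)$; here weak increase along rows is built into the path model and strict increase down columns is exactly the non-intersecting condition on adjacent paths. I expect the crux to be the interaction between the \emph{vertical} staggering of the endpoints, given by the heights $\alpha_i+1$ and $\beta_i$, and the \emph{horizontal} order, given by the strictly decreasing sequences $\mu_i-i$ and $\lambda_i-i$. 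When $\mu_i\ge\lambda_{i+1}$ rows $i$ and $i+1$ of $\lm$ occupy disjoint columns and impose no mutual constraint, which is why no hypothesis is needed; but when $\mu_i<\lambda_{i+1}$ they share a column, the adjacent paths are forced close together, and a swap between them becomes horizontally feasible, so whether it is also forced to cross --- and whether the bijection in (ii) respects the bounds --- depends on the relative heights of the endpoints. Showing that the hypotheses $\alpha_i\le\alpha_{i+1}$ and $\beta_i\le\beta_{i+1}$ are exactly what rule out a non-crossing swap and make (ii) a bijection is the main work; without them a swapped family could survive and destroy the identity.

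Finally, for \eqref{eq:wachs2} I would run the parallel argument using the path model for the elementary symmetric functions, in which East-step heights are required to be \emph{strictly} increasing, so that $e_k(x_{a+1},\dots,x_b)$ replaces $h_k$ and non-intersecting families biject with the flagged fillings of $\lambda'/\mu'$ counted by $s_{\lambda'/\mu'}^{\col(\alpha,\beta)}(x)$. The strictness is most cleanly handled by the standard shear $i_k\mapsto i_k-(k-1)$ turning strict paths into weak ones; this shear shifts the endpoint heights by amounts depending on the horizontal positions, and it is precisely this shift, together with the off-by-one inherent in strict versus weak increase, that I expect to convert the hypotheses of the first part into the shifted inequalities $\alpha_i-\mu_i\le\alpha_{i+1}-\mu_{i+1}+1$ and $\beta_i-\lambda_i\le\beta_{i+1}-\lambda_{i+1}+1$. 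Checking that these are the faithful transports of $\alpha_i\le\alpha_{i+1}$ and $\beta_i\le\beta_{i+1}$ under the shear will be the concluding bookkeeping.
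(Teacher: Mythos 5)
The paper itself gives no proof of Theorem~\ref{thm:wachs}: it is quoted from Wachs \cite{Wachs_1985}, and the only arguments the paper actually carries out are for the $\wg$-generalizations (Theorems~\ref{thm:col_flag2} and~\ref{thm:row_flag}), which proceed by an entirely different method --- peeling off one cell of $\lm$ at a time in the order $\prec$ and showing that the determinant satisfies the same recurrence as the tableau generating function, via row operations coming from identities such as Lemma~\ref{lem:e_k[Z]}. Your Lindstr\"om--Gessel--Viennot plan is therefore a genuinely different route; it is the standard lattice-path proof of the flagged Jacobi--Trudi identities, essentially the argument associated with Gessel's unpublished manuscript cited in the introduction. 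What your route buys is a geometric reading of the flag hypotheses (endpoint heights versus horizontal staggering); what the paper's recurrence method buys is that it survives the $t$-deformation, where the entries $e_{\lambda_i-\mu_j-i+j}[X_{(\alpha_j,\beta_i]}+T_{\lambda_i-1}-T_{\mu_j}]$ mix two alphabets in a way that does not obviously admit a single path model.

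Two concrete caveats. First, your identification of the $(i,j)$ entry with a weighted path count breaks in degenerate cases: if $\lambda_i-i=\mu_j-j$ and $\beta_i\le\alpha_j$, the entry is $h_0$ of the empty list, which is $1$, whereas there is no monotone path from $(\mu_j-j,\alpha_j+1)$ to the strictly lower point $(\lambda_i-i,\beta_i)$, so the path count is $0$. Since the theorem does not assume $\alpha<\beta$, this really occurs (e.g.\ on the diagonal when $\lambda_i=\mu_i$ and $\beta_i\le\alpha_i$, where the flag condition on the empty row is vacuous but your model kills the determinant). You must first reduce to a nondegenerate configuration, in the spirit of the successive reductions at the start of the proof of Theorem~\ref{thm:flag e2}. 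Second, the two steps you yourself label ``the main work'' --- that under the stated hypotheses every non-identity permutation forces a crossing, and that identity families biject with the flagged tableaux respecting the bounds --- are exactly where the hypotheses $\alpha_i\le\alpha_{i+1}$ and $\beta_i\le\beta_{i+1}$ (for $\mu_i<\lambda_{i+1}$) are consumed, and they are announced rather than proved; likewise the bookkeeping transporting these hypotheses through the shear in the $e$-version of the argument. As written this is a sound and standard plan, but not yet a proof.
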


Lam and Pylyavskyy \cite{LP2007} introduced dual stable Grothendieck polynomials
$g_{\lm}(x)$, which originate from the $K$-theory of Grassmannians. They showed
that $g_{\lm}(x)$ is a generating function for reverse plane partitions of shape
$\lm$. The refined dual stable Grothendieck polynomials $\wg_{\lm}(x;t)$ are
power series in variables $x=(x_1,x_2,\dots)$ and $t=(t_1,t_2,\dots)$ introduced
by Galashin, Grinberg, and Liu \cite{GGL2016}. Similarly to dual stable
Grothendieck polynomials, $\wg_{\lm}(x;t)$ are defined as a generating function
for reverse plane partitions of shape $\lm$ with more refined weight system. The
refined dual stable Grothendieck polynomials interpolate between Schur functions
and dual stable Grothendieck polynomials. If $t_i=0$ for all $i$, then
$\wg_{\lm}(x;t)$ becomes the Schur function $s_{\lm}(x)$, and if $t_i=1$ for all
$i$, then $\wg_{\lm}(x;t)$ becomes the dual stable Grothendieck polynomial
$g_{\lm}(x)$.

The following theorem was conjectured by Grinberg \cite{Grinberg_conj} and
proved independently by Amanov and Yeliussizov \cite{AmanovYeliussizov}, and the
author \cite{Kim:JT}.

\begin{thm}\label{thm:JT}\cite{AmanovYeliussizov, Kim:JT}
For partitions $\lambda$ and $\mu$, we have
\[
  \widetilde{g}_{\lambda/\mu}(x;t) = \det
  \left( e_{\lambda'_i-\mu'_j-i+j}
    (x_1,x_2,\dots,t_{\mu'_j+1},t_{\mu'_j+2},\dots,t_{\lambda'_i-1})
  \right)_{1\le i,j\le \ell(\lambda')},
\]
where, if $\mu'_j+1>\lambda'_i-1$, the $(i,j)$ entry is defined to be
$e_{\lambda'_i-\mu'_j-i+j} (x_1,x_2,\dots)$.
\end{thm}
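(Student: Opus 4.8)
The plan is to prove the determinant identity combinatorially via the Lindström–Gessel–Viennot (LGV) lemma, reading each reverse plane partition by its columns. Recall that $\widetilde{g}_{\lm}(x;t)=\sum_{T}\owt(T)$, where $T$ ranges over reverse plane partitions of shape $\lm$ and $\owt(T)$ is the product over all cells $(i,j)$ of $t_i$ when $T(i,j)=T(i+1,j)$ (the entry agrees with the one directly below it) and of $x_{T(i,j)}$ otherwise. First I would record the generating function of a single column. Column $j$ occupies rows $\mu'_j+1,\dots,\lambda'_j$, and its entries form a weakly increasing sequence of length $L_j=\lambda'_j-\mu'_j$.

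The key computation is that this column generating function equals $e_{L_j}(x_1,x_2,\dots,t_{\mu'_j+1},\dots,t_{\lambda'_j-1})$, i.e.\ the diagonal entry of the asserted matrix. To see this, decompose the column into its maximal constant runs: each run contributes the variable $x_v$ for its bottom cell (whose value $v$ differs from the entry below, or is the bottom of the column) and a factor $t_r$ for each of its remaining cells, located at rows $r$. The bottom cells carry the distinct values of the column in strictly increasing order, so they range over a term of $e_d(x_1,x_2,\dots)$ where $d$ is the number of runs, while the repeated cells occupy distinct rows in $\{\mu'_j+1,\dots,\lambda'_j-1\}$ and range over a term of $e_{L_j-d}(t_{\mu'_j+1},\dots,t_{\lambda'_j-1})$. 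Since the two choices are independent, summing over $d$ gives $\sum_{d} e_d(x)\,e_{L_j-d}(t_{\ldots})=e_{L_j}(x_1,x_2,\dots,t_{\mu'_j+1},\dots,t_{\lambda'_j-1})$, as claimed.

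Next I would set up the LGV model. Using the totally ordered alphabet $x_1<x_2<\cdots<t_{\mu'_j+1}<\cdots$ one encodes column $j$ as a monotone lattice path $P_j$ from a source $A_j$ to a sink $B_j$, where vertical steps advance down the column, horizontal $x$-steps record the bottom cells of runs (new values), and horizontal $t$-steps record repeats. The sources and sinks are placed with the usual diagonal shift by $j$ so that the path sum from $A_i$ to $B_j$ equals $e_{\lambda'_i-\mu'_j-i+j}(x_1,x_2,\dots,t_{\mu'_j+1},\dots,t_{\lambda'_i-1})$, the general $(i,j)$ entry; here the lower limit $\mu'_j+1$ comes from the sink and the upper limit $\lambda'_i-1$ from the source. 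Two adjacent paths $P_j,P_{j+1}$ are non-intersecting precisely when the entries satisfy $T(r,j)\le T(r,j+1)$ in every row $r$, so non-intersecting families are in weight-preserving bijection with reverse plane partitions of shape $\lm$. The LGV lemma then yields $\det\big(e_{\lambda'_i-\mu'_j-i+j}(\dots)\big)=\sum_{T}\owt(T)=\widetilde{g}_{\lm}(x;t)$.

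The main obstacle is the non-intersection step: unlike the Schur case, columns of a reverse plane partition are only weakly increasing, so the path model must carry two kinds of horizontal steps, and I must verify that non-intersection reproduces exactly the row inequalities of an RPP together with the correct $t$-weights, with special attention to the interaction of repeated entries at the skew boundary. A natural alternative is the plethystic route suggested by the title: one would try to realize the mixed alphabet $x_1,x_2,\dots,t_{\mu'_j+1},\dots,t_{\lambda'_i-1}$ as a plethystic substitution into the ordinary dual Jacobi–Trudi determinant \eqref{eq:s=det e} for Schur functions. The difficulty there is that this alphabet depends on both the row index $i$ and the column index $j$ through its truncation bounds, so no single global substitution suffices; making the column-dependent (flagged) substitution compatible with the determinant is exactly the subtlety that the plethystic proof must overcome.
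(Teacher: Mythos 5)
Your single-column computation is correct and does explain the diagonal entries: a column occupying rows $\mu'_j+1,\dots,\lambda'_j$ is equivalent to a free choice of a $(\lambda'_j-\mu'_j)$-subset of the alphabet $\{x_1,x_2,\dots\}\cup\{t_{\mu'_j+1},\dots,t_{\lambda'_j-1}\}$ (the values give the $x$-letters, the repeat rows give the $t$-letters), whence $e_{\lambda'_j-\mu'_j}$ of that alphabet. The fatal problem is exactly the step you flag as "the main obstacle": it cannot be repaired in the model you describe. In any lattice-path realization where $e_k$ of a linearly ordered alphabet is encoded by the levels of $k$ east steps, non-intersection of two adjacent paths amounts to a (weak or strict, possibly shifted) componentwise domination between the two sorted letter subsets, and no linear order on the mixed alphabet makes this coincide with the RPP row inequalities. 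Concretely, for two adjacent length-$2$ columns in rows $1,2$, a constant column of value $v$ is the subset $\{x_v,t_1\}$ and a column with entries $u<v$ is $\{x_u,x_v\}$. The valid RPP $\left(\begin{smallmatrix}1&1\\1&2\end{smallmatrix}\right)$ forces $t_1\prec x_2$ in the order (else $\{x_1,t_1\}\not\le\{x_1,x_2\}$, and it also rules out strict domination since it needs $x_1\preceq x_1$), while the valid RPP $\left(\begin{smallmatrix}1&2\\2&2\end{smallmatrix}\right)$ forces $x_1\prec t_1$; so necessarily $x_1\prec t_1\prec x_2$. But then the non-RPP filling $\left(\begin{smallmatrix}3&2\\3&5\end{smallmatrix}\right)$ has sorted subsets $(t_1,x_3)\le(x_2,x_5)$, so its paths would be non-intersecting even though $T(1,1)=3>2=T(1,2)$. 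Hence non-intersecting families are not in weight-preserving bijection with RPPs, and the determinant you would get from LGV is not $\wg_{\lm}(x;t)$ by this argument. This is precisely why the known combinatorial proofs of this identity do not proceed by naive LGV; a bijective route would need either paths permitted to overlap with corrective weights or a sign-reversing involution tailored to the $t$-steps, neither of which is supplied.

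Your closing remark about plethysm correctly identifies why a single global substitution into \eqref{eq:s=det e} cannot work (the alphabet depends on both $i$ and $j$), but the paper does not overcome this by massaging the Schur determinant. Instead it proves the stronger column-flagged identity of Theorem~\ref{thm:col_flag2} by a completely different mechanism: the determinant $E^{\alpha,\beta}_{\lambda,\mu}$ and the generating function $\oR^{\alpha,\beta}_{\lambda,\mu}(\emptyset)$ are shown to satisfy the same recurrence obtained by filling the cells of $\lm$ one at a time in the order $\prec$ (Propositions~\ref{prop:rec RPP R0} and~\ref{prop:main e}, with the row expansions of Lemma~\ref{lem:rec e} driving the induction), the two forms of the matrix entries are reconciled in Proposition~\ref{prop:equivalence}, and Theorem~\ref{thm:JT} then follows by taking $\alpha=(0^n)$, $\beta=(b^n)$ and letting $b\to\infty$. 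So beyond the correct one-column identity, your proposal is missing the essential idea rather than a routine verification.
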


Since there are two Jacobi--Trudi formulas for $s_{\lm}(x)$ in \eqref{eq:s=det
  h} and \eqref{eq:s=det e}, a natural question is whether there is a
Jacobi--Trudi formula for $\wg_\lm(x;t)$ in terms of $h_k$'s. For the case of
dual stable Grothendieck polynomials, equivalently the case that all $t_i=1$,
Amanov and Yeliussizov \cite[Theorem~14]{AmanovYeliussizov}, and Iwao
\cite[Proposition~5.2]{iwao20:free_groth} found the following formula.

\begin{thm}\label{thm:JT2}
\cite{AmanovYeliussizov, iwao20:free_groth}
For partitions $\lambda$ and $\mu$, we have
\[
  g_\lm(x)=\widetilde{g}_{\lambda/\mu}(x;(1,1,\dots)) = \det
  \left( \phi^{i-j} h_{\lambda_i-\mu_j-i+j} (x)
  \right)_{1\le i,j\le \ell(\lambda)},
\]
where $\phi^k h_n = \sum_{i=0}^n \binom{k+i-1}{i} h_{n-i}$.
\end{thm}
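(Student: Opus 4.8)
The plan is to deduce the identity from Theorem~\ref{thm:JT} rather than from the combinatorics of reverse plane partitions directly. Setting $t_i=1$ in Theorem~\ref{thm:JT} already expresses $g_\lm(x)$ as the $e$-determinant
\[
  g_\lm(x)=\det\left(e_{\lambda'_i-\mu'_j-i+j}\bigl(x_1,x_2,\dots,\underbrace{1,\dots,1}_{\lambda'_i-\mu'_j-1}\bigr)\right)_{1\le i,j\le\ell(\lambda')},
\]
so it suffices to prove that the $\phi$-twisted $h$-determinant of Theorem~\ref{thm:JT2} equals this $e$-determinant. Both are flagged Jacobi--Trudi determinants, and the equality is an instance of the transpose (dual) Jacobi--Trudi duality carrying an extra flag. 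First I would record the plethystic dictionary that makes the two flags comparable: from $\sum_n(\phi^k h_n)z^n=(1-z)^{-k}\sum_n h_n z^n$ one reads off $\phi^k h_n=h_n(x_1,x_2,\dots,1^k)$, i.e.\ $\phi^{i-j}$ adjoins $i-j$ variables equal to $1$ (formally removing $|i-j|$ of them when $i<j$); likewise the $(i,j)$ entry above is $e_{\lambda'_i-\mu'_j-i+j}$ evaluated with $\lambda'_i-\mu'_j-1$ variables equal to $1$. Thus both sides live in the common language of symmetric functions with a prescribed number of adjoined $1$'s.

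The algebraic engine is the mixed orthogonality obtained from $E(-z)H(z)=1$ after adjoining ones: writing $e^{[m]}_k$ and $h^{[m']}_k$ for the elementary and complete functions evaluated with $m$ and $m'$ extra $1$'s, one has
\[
  \sum_{k}(-1)^k e^{[m]}_k h^{[m']}_{n-k}=[z^n](1-z)^{m-m'}=(-1)^n\binom{m-m'}{n},
\]
which collapses to $\delta_{n,0}$ exactly when $m=m'$. I would then effect the transpose of the $\ell(\lambda')$-dimensional $e$-determinant into the $\ell(\lambda)$-dimensional $h$-determinant by a Lindström--Gessel--Viennot family of non-intersecting lattice paths, with Jacobi's complementary-minor theorem as its algebraic counterpart. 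In the path model the adjoined $1$'s become repeatable weight-$1$ steps whose admissible range at each endpoint is set by the flag; the reflection/complementation bijection that proves ordinary dual Jacobi--Trudi carries the $e$-paths for $\lambda'/\mu'$ to the $h$-paths for $\lambda/\mu$, and the off-diagonal discrepancies $\binom{m-m'}{n}$ forced by unequal flags reassemble, via the displayed identity, into precisely the operator $\phi^{i-j}$.

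The hard part will be the flag bookkeeping. Unlike the subscript $\lambda_i-\mu_j-i+j$, the flag $\phi^{i-j}$ is \emph{rank}-based: it depends on the positions $i,j$ within the partitions, not on the bead values $\lambda_i-i$ and $\mu_j-j$, so it cannot be encoded in a single position-indexed Toeplitz matrix and must be tracked through the transpose correspondence by hand (this is exactly why the path model, where flags are endpoint-height constraints indexed by path rank, is the more transparent vehicle). Concretely, one must match $i-j$ on the $h$-side with $\lambda'_i-\mu'_j-1$ on the $e$-side under the Maya-diagram complementation of $\{\lambda_i-i\}$ and $\{\,j-1-\lambda'_j\,\}$, getting the $-1$ shift and all signs right, and one must justify truncating the bi-infinite matrices to a finite window. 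As a consistency check, for $\lambda=(2)$ both determinants reduce to $h_2=e_1^2-e_2$, and for $\lambda=(1,1)$ both reduce to $e_1+e_2$, with the $\phi$-corrections on one side and the $t=1$ flags on the other producing the same symmetric function; securing this agreement for a general flag alignment is the crux of the argument.
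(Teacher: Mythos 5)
Your overall strategy --- deducing Theorem~\ref{thm:JT2} from Theorem~\ref{thm:JT} at $t_i=1$ via a flagged transpose duality --- is essentially the Amanov--Yeliussizov route, and it can be made to work when all $t_i=1$; but as written the proposal stops exactly at the step that constitutes the whole difficulty. Your own ``mixed orthogonality'' identity shows that when the numbers $m$ and $m'$ of adjoined $1$'s differ, the sequences $(-1)^k e^{[m]}_k$ and $h^{[m']}_k$ are \emph{not} mutually inverse under convolution, so Jacobi's complementary-minor theorem (equivalently, the LGV reflection) does not apply directly; and in the two determinants the flags genuinely differ --- $\lambda'_i-\mu'_j-1$ adjoined ones on the $e$-side (value-indexed) versus $i-j$ on the $h$-side (rank-indexed). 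You acknowledge this as ``the crux of the argument'' but supply no mechanism: one would need to conjugate by explicit unitriangular matrices that move all flags to a common reference before complementary minors can be invoked, and asserting that ``the off-diagonal discrepancies reassemble into precisely the operator $\phi^{i-j}$'' is a restatement of the theorem, not a proof of it. So there is a genuine gap.

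You have also missed the much shorter route the paper takes: Theorem~\ref{thm:JT2} is the specialization $t_i=1$ of the formula \eqref{eq:jt2} (Corollary~\ref{cor:row_flag}), not of Theorem~\ref{thm:JT}. Setting every $t_i=1$ turns $T_{i-1}-T_{j-1}$ into the integer $i-j$ inside the plethysm, and for any integer $k$ one has $h_n[X+k]=\sum_{a\ge 0}h_{n-a}(x)\,h_a[k]=\sum_{a\ge 0}\binom{k+a-1}{a}h_{n-a}(x)=\phi^k h_n$ (the case $k<0$ following from $h_a[-m]=(-1)^a e_a[m]=\binom{-m+a-1}{a}$). This yields the $\phi$-twisted $h$-determinant immediately, with no duality argument at all. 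Indeed, the paper remarks right after Corollary~\ref{cor:row_flag} that the involution underlying the duality route exists only when all $t_i$ equal $1$, which is precisely why it establishes the $h$-side formula independently of the $e$-side rather than by transposition.
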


In this paper we give a Jacobi--Trudi formula for $\wg_\lm(x;t)$ in terms of
$h_k$'s using plethystic substitution. We also give an equivalent version of
Theorem~\ref{thm:JT} using plethystic substitution. More generally, we prove
Jacobi--Trudi formulas for \emph{flagged} refined dual stable Grothendieck
polynomials $\wg_{\lambda'/\mu'}^{\col(\alpha,\beta)}(x;t)$ and
$\wg_{\lambda/\mu}^{\row(\alpha,\beta)}(x;t)$, which are generating functions
for reverse plane partitions in which each column and row has lower and upper
bounds given by $\alpha$ and $\beta$. See Section~\ref{sec:preliminaries} for
the precise definitions.

Our main results are the two Jacobi--Trudi-type formulas in the following
theorem.

\begin{thm} \label{thm:main} Let $\alpha=(\alpha_1,\dots,\alpha_n)$ and
  $\beta=(\beta_1,\dots,\beta_n)$ be sequences of nonnegative
  integers and let $\lambda$ and $\mu$ be partitions with at most $n$ parts. 

  If $\alpha_i\le \alpha_{i+1}+1$ and $\beta_i\le \beta_{i+1}+1$ whenever
  $\mu_i<\lambda_{i+1}$, then
  \begin{equation}
\label{eq:main1}
  \wg_{\lambda'/\mu'}^{\col(\alpha,\beta)}(x;t) = \det \left(
    e_{\lambda_i-\mu_j-i+j}[X_{(\alpha_j,\beta_i]}+T_{\lambda_i-1}-T_{\mu_j}]
  \right)_{1\le i,j\le n},
  \end{equation}
where $X_{(i,j]} = x_{i+1}+x_{i+2}+\dots+x_j$ for $i<j$ and $X_{(i,j]} = 0$ for
$i\ge j$, and $T_i=t_1+t_2+\dots+t_i$ for $i\ge1$ and $T_0=0$.

If $\alpha_i\le \alpha_{i+1}$ and $\beta_i\le \beta_{i+1}$ whenever
$\mu_i<\lambda_{i+1}$, then
\begin{equation}
\label{eq:main2}
\wg_{\lambda/\mu}^{\row(\alpha,\beta)}(x;t) = \det \left(
      h_{\lambda_i-\mu_j-i+j}[X_{(\alpha_j,\beta_i]}+T_{i-1}-T_{j-1}]
      \right)_{1\le i,j\le n}.
\end{equation}
\end{thm}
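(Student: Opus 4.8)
The plan is to prove the column formula~\eqref{eq:main1} first and then obtain the row formula~\eqref{eq:main2} by a parallel argument. The central idea is to convert the refined weight on reverse plane partitions into an ordinary flagged semistandard weight by a plethystic substitution, so that Wachs' dual flagged Jacobi--Trudi formula~\eqref{eq:wachs2} can be applied and its determinant entries then reinterpreted plethystically. Concretely, I would first set up, from the combinatorial definition in Section~\ref{sec:preliminaries}, a weight-preserving bijection between flagged reverse plane partitions of shape $\lambda'/\mu'$ and flagged column-strict tableaux in a totally ordered alphabet that interleaves the $x$-letters with the $t$-letters. In a column of a reverse plane partition the topmost occurrence of each value is ``new'' and carries an $x$-weight, while every cell equal to the one above it is a ``repeat'' carrying the weight $t_r$, where $r$ is its row. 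Promoting each repeat to a strictly larger letter drawn from the interleaved $t$-alphabet turns weakly increasing columns (with $t$-tracked repeats) into strictly increasing columns in the combined alphabet, and records the $x$- and $t$-weights as the content in this alphabet. This is precisely the plethystic statement that $\wg_{\lambda'/\mu'}^{\col(\alpha,\beta)}(x;t)$ is a flagged skew Schur function evaluated on the combined alphabet $X+T$, with flags induced by $\alpha$, $\beta$ and the shape.

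With this reduction in hand, the next step is to apply Wachs' formula~\eqref{eq:wachs2} to the resulting flagged Schur function in the combined alphabet. The $(i,j)$ determinant entry it produces is an elementary symmetric function in a window of the combined alphabet; I would check that this window is exactly the plethystic alphabet $X_{(\alpha_j,\beta_i]}+T_{\lambda_i-1}-T_{\mu_j}$ appearing in~\eqref{eq:main1}, that is, the $x$-window $x_{\alpha_j+1},\dots,x_{\beta_i}$ cut out by the flag together with the $t$-window carved out by the row range of the corresponding column. Because the $t$-letters are interleaved with the $x$-letters, the flag conditions $\alpha_i\le\alpha_{i+1}+1$ and $\beta_i\le\beta_{i+1}+1$ of~\eqref{eq:main1} are the images, under this interleaving, of the monotonicity hypotheses of~\eqref{eq:wachs2} for the combined flags; verifying this correspondence, and that it is compatible with the index shifts $-i+j$ carried by the determinant, is a delicate bookkeeping step.

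As a sanity check I would verify that taking $\alpha=(0,0,\dots)$ and $\beta$ large recovers the unflagged identity: then $X_{(\alpha_j,\beta_i]}$ becomes the full alphabet $x_1+x_2+\cdots$ and~\eqref{eq:main1} specializes to the plethystic reformulation of Theorem~\ref{thm:JT} applied to the shape $\lambda'/\mu'$. The row formula~\eqref{eq:main2} would then be proved by the same scheme, using the complete-homogeneous version~\eqref{eq:wachs1} of Wachs' formula and the row-flagged reverse plane partition model; it is reassuring that its flag hypotheses $\alpha_i\le\alpha_{i+1}$, $\beta_i\le\beta_{i+1}$ coincide verbatim with those of~\eqref{eq:wachs1}. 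Here the asymmetry of the refined weight (the $x$-weight is governed by columns, the $t$-weight by rows) is reflected in the different $t$-window $T_{i-1}-T_{j-1}$, so the interleaving and the attachment of repeat-letters to the shape must be redone in the row setting rather than transported formally from the column case.

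The hard part will be the two places where the plethystic and combinatorial pictures must be reconciled. The first is establishing the weight-preserving bijection to flagged tableaux in the interleaved alphabet and proving that it respects the flag conditions; the choice of the total order on $\{x_i\}\cup\{t_i\}$ is essential, and the flag bounds on the repeat-letters must be tied correctly to the row ranges of the columns. The second is handling the degenerate entries: empty $x$-windows (when $\beta_i\le\alpha_j$) and, more seriously, ``negative'' $t$-windows (when $\mu_j\ge\lambda_i$), where $T_{\lambda_i-1}-T_{\mu_j}$ must be read through the supersymmetric convention $e_m[A-B]=\sum_k(-1)^k e_{m-k}[A]h_k[B]$ and shown to agree with the boundary convention of Theorem~\ref{thm:JT}. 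I expect verifying that these conventions are consistent across all off-diagonal entries to be the main technical obstacle.
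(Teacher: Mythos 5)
Your plan breaks down at its very first step: the weight-preserving bijection to column-strict tableaux in a single interleaved totally ordered alphabet does not exist. Take one column of length $3$ (say $\lambda=(3)$, $\mu=\emptyset$, so $\lambda'/\mu'$ is a single column). The filling $(1,1,2)$ has weight $x_1x_2t_1$, and your promotion rule turns it into the letters $x_1,t_1,x_2$, which forces $x_1<t_1<x_2$ in the combined order; the filling $(2,2,3)$ has weight $x_2x_3t_1$ and forces $x_2<t_1<x_3$. These requirements contradict each other, so no total order on $\{x_i\}\cup\{t_j\}$ makes the promoted columns strictly increasing. (The identity $\wg_{(1^3)}(x;t)=e_3[X+T_2]=e_3(x_1,x_2,\dots,t_1,t_2)$ is true, but as a symmetric-function identity, not via a letter-by-letter bijection; this distinction becomes fatal once several columns must simultaneously satisfy the row-weakness and flag conditions.) The second step fails for a structural reason as well: in \eqref{eq:main1} the $t$-window $t_{\mu_j+1},\dots,t_{\lambda_i-1}$ of the $(i,j)$ entry is carved out by the \emph{positions} $\mu_j,\lambda_i$ coming from the shape, not by value bounds. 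For \eqref{eq:wachs2} to apply in a combined alphabet, each set $\{x_1,\dots,x_{\alpha_j}\}\cup\{t_1,\dots,t_{\mu_j}\}$ and each set $\{x_1,\dots,x_{\beta_i}\}\cup\{t_1,\dots,t_{\lambda_i-1}\}$ would have to be an initial segment of one fixed total order, hence these sets would have to form a chain under inclusion; since $\mu_j$ decreases in $j$ while $\alpha_j$ need not, they do not (e.g.\ $\alpha=(0,1)$, $\mu=(1,0)$ gives the incomparable sets $\{t_1\}$ and $\{x_1\}$). Relatedly, if $\wg_{\lambda'/\mu'}(x;t)$ were a flagged Schur function of a single combined alphabet it would be symmetric in the $t_i$, which it is not. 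Remark~\ref{rem:wachs} is a further warning sign: \eqref{eq:main1} is false under Wachs-type hypotheses, so it cannot be a formal specialization of \eqref{eq:wachs2}.

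The paper takes an entirely different route: after reducing to the case $\alpha,\beta\in\QPar_n$ (resp.\ $\RPar_n$), $\alpha<\beta$, $\mu<\lambda$, it shows that the determinant $E^{\alpha,\beta}_{\lambda,\mu}$ and the generating function $\oR^{\alpha,\beta}_{\lambda,\mu}(\emptyset)$ satisfy the same recurrence obtained by filling the cells of $\lm$ one at a time in the order $\prec$ (Propositions~\ref{prop:rec RPP R0} and \ref{prop:main e}, and their analogues for \eqref{eq:main2}). If you wish to keep a Wachs-flavoured approach, you would need a new determinant lemma in which the $(i,j)$ entry carries its own alphabet $\{x_{\alpha_j+1},\dots,x_{\beta_i}\}\cup\{t_{\mu_j+1},\dots,t_{\lambda_i-1}\}$, with the two lower ends depending on $j$ and the two upper ends on $i$; proving such a lemma is essentially the content of Theorem~\ref{thm:col_flag} itself, so nothing is gained by routing through \eqref{eq:wachs2}.
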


Note that the assumption on $\alpha$ and $\beta$ in our formula \eqref{eq:main1}
is different from that in the formula \eqref{eq:wachs2}. In fact
\eqref{eq:main1} is not true under the assumptions for \eqref{eq:wachs2}, see
Remark~\ref{rem:wachs}.

The basic idea of proof of \eqref{eq:main1} and \eqref{eq:main2} is to show that
both sides of the equation satisfy the same recurrence relation. We also show
that \eqref{eq:main1} is equivalent to the following formula, which was
conjectured by Grinberg (private communication).

\begin{thm}\label{thm:col_flag3}
  Let $\alpha=(\alpha_1,\dots,\alpha_n)$ and $\beta=(\beta_1,\dots,\beta_n)$ be
  sequences of nonnegative integers and let $\lambda$ and $\mu$ be
  partitions with at most $n$ parts. If $\alpha_i\le \alpha_{i+1}+1$ and
  $\beta_i\le \beta_{i+1}+1$ whenever $\mu_i<\lambda_{i+1}$, then
\begin{equation}
\wg_{\lambda'/\mu'}^{\col(\alpha,\beta)}(x;t)  = \det \left(
    e_{\lambda_i-\mu_j-i+j}(x_{\alpha_j+1},\dots, x_{\beta_i},t_{\mu_j+1},\dots,t_{\lambda_i-1})
  \right)_{1\le i,j\le n}.
\end{equation}
\end{thm}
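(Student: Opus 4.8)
The plan is to deduce Theorem~\ref{thm:col_flag3} from \eqref{eq:main1}, which may be assumed. Since the two theorems have identical left-hand sides, it suffices to prove that the two determinants agree. Write $E_{ij}=e_{\lambda_i-\mu_j-i+j}[X_{(\alpha_j,\beta_i]}+T_{\lambda_i-1}-T_{\mu_j}]$ for the $(i,j)$ entry in \eqref{eq:main1} and $E'_{ij}=e_{\lambda_i-\mu_j-i+j}(x_{\alpha_j+1},\dots,x_{\beta_i},t_{\mu_j+1},\dots,t_{\lambda_i-1})$ for the $(i,j)$ entry in Theorem~\ref{thm:col_flag3}. The first step is to unwind the plethystic substitution. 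By definition $e_k[Y]=e_k(y_1,y_2,\dots)$ whenever $Y=y_1+y_2+\cdots$ is an honest sum of variables; and when $\lambda_i-1\ge\mu_j$ one has $T_{\lambda_i-1}-T_{\mu_j}=t_{\mu_j+1}+\cdots+t_{\lambda_i-1}$, so the alphabet $X_{(\alpha_j,\beta_i]}+T_{\lambda_i-1}-T_{\mu_j}$ is exactly the list $x_{\alpha_j+1},\dots,x_{\beta_i},t_{\mu_j+1},\dots,t_{\lambda_i-1}$ and hence $E_{ij}=E'_{ij}$. Thus a discrepancy can occur only when $\lambda_i\le\mu_j$.

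I would next localize the remaining discrepancies. If $\lambda_i\le\mu_j$ and $\lambda_i-\mu_j-i+j\le0$, then both entries equal $0$ (negative index) or $1$ (index $0$), so $E_{ij}=E'_{ij}$ again. The only positions left satisfy $\lambda_i\le\mu_j$ and $\lambda_i-\mu_j-i+j>0$, which forces $j>i$; but $\mu\subseteq\lambda$ gives $\mu_j\le\mu_i\le\lambda_i$ for $j>i$, so in fact $\lambda_i=\mu_j$. Call such positions \emph{exceptional}. For an exceptional position, $\mu_i\le\lambda_i=\mu_j\le\mu_i$ forces $\mu_i=\lambda_i$, and applying the same reasoning along the interval shows $\lambda_k=\mu_k=\lambda_i$ for all $k\in[i,j]$. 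Hence every exceptional position $(i,j)$ has both indices inside one maximal \emph{empty block} $B=[a,b]$ of rows on which $\lambda$ and $\mu$ agree and are constant.

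The \textbf{main step} is to show that exceptional entries cannot influence the determinant. I would fix a maximal empty block $B=[a,b]$ and record two vanishing patterns coming from the index $\lambda_i-\mu_j-i+j$ being negative: for $i\in B$ one has $E_{ij}=0$ whenever $j<i$ (using $\mu_j\ge\lambda_i$ there), and for $i>b$ one has $E_{ij}=0$ whenever $j\le b$ (using $\lambda_i\le\lambda_b=\mu_j$ and $i>j$). These patterns say precisely that, with respect to the partition $[1,a-1]\cup B\cup[b+1,n]$ of the index set, the matrix is block upper triangular. Consequently its determinant factors as $\det M_{[1,a-1]}\cdot\det M_{B}\cdot\det M_{[b+1,n]}$, where $M_B$ is the principal submatrix on the rows and columns in $B$. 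Since the $(i,j)$ entry of $M_B$ is $e_{j-i}[\,\cdots\,]$, the block $M_B$ is upper unitriangular, so $\det M_B=1$ independently of its strictly upper entries, and those strictly upper entries are exactly the exceptional ones of this block.

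Finally, a given exceptional entry at position $(p,q)$ lies in a unique maximal empty block $B$, and in the factorization around $B$ it occurs only as a strictly upper entry of the unitriangular factor $M_B$, whose determinant is $1$ regardless of its value; the other two factors do not involve position $(p,q)$. Therefore $\det(E_{ij})$ does not depend on that entry. Changing the exceptional entries one at a time, each in its own block, transforms the matrix $(E_{ij})$ into $(E'_{ij})$ without ever altering the determinant, which gives $\det(E_{ij})=\det(E'_{ij})$ and hence Theorem~\ref{thm:col_flag3}. I expect the only delicate point to be the bookkeeping of the two vanishing patterns that yield the block-triangular form; everything else is a direct reading of the plethystic definition together with the unitriangularity of the diagonal blocks.
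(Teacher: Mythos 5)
Your argument is correct and is essentially the paper's own proof (Proposition~\ref{prop:equivalence}): both reduce the statement to Theorem~\ref{thm:col_flag2} and then show the two determinants agree by localizing the discrepant entries to positions $(i,j)$ with $j>i$ and $\lambda_i=\mu_i=\cdots=\mu_j=\lambda_j$, which are then forced out of every nonzero term of the determinant expansion --- you via a block factorization around the flat block with a unitriangular middle factor, the paper via the second claim of Lemma~\ref{lem:det=det1det} that every nonzero term must use the pinned $(j,j)$ entry. The one point to add is the case $\mu\not\subseteq\lambda$, which the hypotheses do not exclude and in which your localization step (which uses $\mu_j\le\mu_i\le\lambda_i$) does not apply; there both determinants vanish outright by the triangularity argument of Lemma~\ref{lem:det=0}, as the paper notes.
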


Note that Theorem~\ref{thm:JT} follows from Theorem~\ref{thm:col_flag3}. As a
corollary of Theorem~\ref{thm:main} we obtain two Jacobi--Trudi formulas for the
refined dual stable Grothendieck polynomials.

\begin{cor}\label{cor:main}
Let   $\lambda$ and $\mu$ be partitions with at most $n$ parts. Then
\begin{align}
  \label{eq:jt1}
\wg_{\lambda'/\mu'}(x;t)&= \det \left(
      e_{\lambda_i-\mu_j-i+j}[X+T_{\lambda_i-1}-T_{\mu_j}]
      \right)_{1\le i,j\le n},\\
  \label{eq:jt2}
\wg_{\lambda/\mu}(x;t)&= \det \left(
      h_{\lambda_i-\mu_j-i+j}[X+T_{i-1}-T_{j-1}]
      \right)_{1\le i,j\le n},
\end{align}
where $X=x_1+x_2+\cdots$ and $T_i=t_1+t_2+\dots+t_i$ for $i\ge1$ and $T_0=0$.
\end{cor}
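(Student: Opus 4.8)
The plan is to derive both identities directly from Theorem~\ref{thm:main} by specializing the flags and then letting the upper bound tend to infinity. For \eqref{eq:jt1} I would apply \eqref{eq:main1} with the constant flags $\alpha=(0,\dots,0)$ and $\beta=(N,\dots,N)$ for a positive integer $N$, and for \eqref{eq:jt2} I would apply \eqref{eq:main2} with the same flags. In both cases the lower bound $\alpha_j=0$ imposes no restriction (entries remain $\ge 1$) while the upper bound $\beta_i=N$ restricts the entries of the reverse plane partitions to $\{1,\dots,N\}$, so that sending $N\to\infty$ should recover the unflagged polynomials $\wg_{\lambda'/\mu'}(x;t)$ and $\wg_{\lambda/\mu}(x;t)$ on the left-hand sides.

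First I would verify that this choice satisfies the hypotheses of Theorem~\ref{thm:main}. Since $\alpha$ and $\beta$ are each constant, we have $\alpha_i=0\le 1=\alpha_{i+1}+1$ and $\beta_i=N\le N+1=\beta_{i+1}+1$ for every $i$, so the hypothesis of \eqref{eq:main1} holds; likewise $\alpha_i=0\le 0=\alpha_{i+1}$ and $\beta_i=N\le N=\beta_{i+1}$, so the hypothesis of \eqref{eq:main2} holds as well. Thus both flagged identities are available for every $N$. On the right-hand side, with $\alpha_j=0$ the plethystic alphabet becomes $X_{(\alpha_j,\beta_i]}=X_{(0,N]}=x_1+\dots+x_N$, and as $N\to\infty$ each entry $e_{\lambda_i-\mu_j-i+j}[X_{(0,N]}+\cdots]$ (respectively the $h$-entry) converges coefficientwise to the corresponding entry with $X_{(0,N]}$ replaced by $X=x_1+x_2+\cdots$, so the determinants converge to the right-hand sides of \eqref{eq:jt1} and \eqref{eq:jt2}.

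The step that requires the most care is the coefficientwise convergence of the flagged left-hand sides, which rests on the definition of $\wg^{\col}$ and $\wg^{\row}$ in Section~\ref{sec:preliminaries}: a fixed monomial in $x$ records exactly which entry values occur in a contributing reverse plane partition, hence bounds its largest entry by an integer depending only on that monomial. Consequently, for all sufficiently large $N$ the flag $\beta=(N,\dots,N)$ excludes none of the reverse plane partitions producing a given monomial, so the corresponding coefficients of $\wg_{\lambda'/\mu'}^{\col((0,\dots,0),(N,\dots,N))}(x;t)$ and $\wg_{\lambda/\mu}^{\row((0,\dots,0),(N,\dots,N))}(x;t)$ stabilize to those of $\wg_{\lambda'/\mu'}(x;t)$ and $\wg_{\lambda/\mu}(x;t)$ respectively; the accompanying $t$-weights are unaffected by $N$ once this stabilization occurs. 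Taking $N\to\infty$ in \eqref{eq:main1} and \eqref{eq:main2} then yields \eqref{eq:jt1} and \eqref{eq:jt2}, completing the proof.
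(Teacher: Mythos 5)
Your proposal is correct and is essentially the paper's own argument: the paper proves \eqref{eq:jt2} by setting $\alpha=(0^n)$, $\beta=(b^n)$ in Theorem~\ref{thm:row_flag} and letting $b\to\infty$ (noting $\wg^{\row((0^n),(b^n))}_{\lm}(x;t)=\wg_{\lm}((x_1,\dots,x_b,0,0,\dots);t)$), and obtains \eqref{eq:jt1} from \eqref{eq:main1} in the same way. Your verification of the flag hypotheses and the coefficientwise stabilization just makes explicit what the paper leaves implicit.
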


It can also be shown that the formula \eqref{eq:jt1} is equivalent to
Theorem~\ref{thm:JT}. Note that Corollary~\ref{cor:main} reproves the symmetry
of $\wg_{\lambda/\mu}(x;t)$ in the $x$ variables.

We note that Motegi and Scrimshaw \cite{Motegi} also proved \eqref{eq:jt2} using
difference operators.

The remainder of this paper is organized as follows. In
Section~\ref{sec:preliminaries} we give basic definitions. In
Section~\ref{sec:main} we restate our main results and give some remarks. In the
last two sections we prove the main results.

\medskip

\section{Preliminaries}
\label{sec:preliminaries}

In this section we give necessary definitions to prove the main results.

\subsection{Basic definitions}
\label{sec:basic-definitions}

Denote by $\NN$ the set of nonnegative integers. For $n\in\NN$, we denote
$[n]=\{1,2,\dots,n\}$. If $\alpha\in\NN^n$, the $i$th entry of $\alpha$ is
denoted by $\alpha_i$, i.e., $\alpha=(\alpha_1,\dots,\alpha_n)$. For
$\alpha,\beta\in\NN^n$, we write $\alpha<\beta$ (resp.~$\alpha\le \beta$) if
$\alpha_i<\beta_i$ (resp.~$\alpha_i\le\beta_i$) for all $1\le i\le n$.

An element $\alpha\in\NN^n$ is called a \emph{partition} if
$\alpha_1\ge\dots\ge\alpha_n$. Denote by $\Par_n$ the set of partitions in
$\NN^n$.

Let $\lambda\in\Par_n$. The \emph{Young diagram} of $\lambda$ is the set
$\{(i,j)\in\ZZ\times\ZZ: 1\le i\le n \mbox{ and } 1\le j\le \lambda_i \}$. We
will identify $\lambda$ with its Young diagram. Therefore a partition is
considered as a sequence of nonnegative integers and also as a set of pairs of
positive integers. Each element $(i,j)\in\lambda$ is called a \emph{cell}. The
Young diagram $\lambda$ will be visualized as an array of squares where we place
a square in row $i$ and column $j$ for each $(i,j)\in\lambda$ using the matrix
coordinates. The \emph{transpose} $\lambda'$ of $\lambda$ is the partition given
by $\lambda'=\{(j,i):(i,j)\in\lambda\}$. Note that if
$\lambda=(\lambda_1,\dots,\lambda_n)\in\Par_n$, then
$\lambda'\in \Par_{\lambda_1}$. See Figure~\ref{fig:yd}.

\begin{figure}
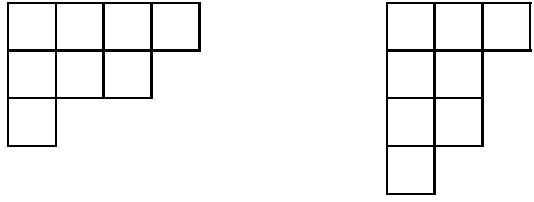

  \centering
  \ydiagram{4,3,1,0}  \qquad \qquad \qquad \ydiagram{3,2,2,1}
  \caption{The Young diagram of $\lambda=(4,3,1)$ on the left and its transpose
  $\lambda'=(3,2,2,1)$ on the right.}
  \label{fig:yd}
\end{figure}

Note that for two partitions $\lambda$ and $\mu$, we have $\mu\subseteq\lambda$
(as Young diagrams) if and only if $\mu\le \lambda$ (as elements in $\NN^n$). We
will mostly use the notation $\mu\subseteq\lambda$ since this emphasizes that
$\mu$ and $\lambda$ are Young diagrams.

For two partitions $\lambda$ and $\mu$ with $\mu\subseteq\lambda$, the
\emph{skew shape} $\lambda/\mu$ is the set-theoretic difference $\lambda-\mu$ of
their Young diagrams. In other words, if $\lambda,\mu\in\Par_n$ satisfy
$\mu\subseteq\lambda$, then $\lm$ is the set of pairs $(i,j)\in\ZZ\times\ZZ$
such that $1\le i\le n$ and $\mu_i+1\le j\le \lambda_i$. A \emph{reverse plane
  partition (RPP)} of shape $\lm$ is a filling of $\lm$ with positive integers
such that the entries are weakly increasing in each row and column. If $R$ is an
RPP of shape $\lm$, the $(i,j)$ entry of $R$ is denoted by $R(i,j)$. The
\emph{transpose} of $R$ is the RPP $R'$ of shape $\lambda'/\mu'$ given by
$R'(i,j)=R(j,i)$ for all $(i,j)\in\lambda'/\mu'$. See Figure~\ref{fig:rpp}.

\begin{figure}
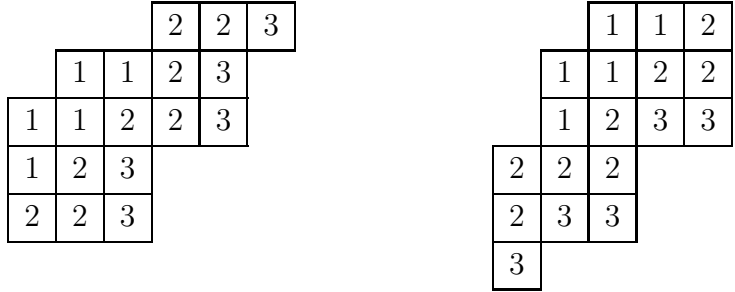

  \centering
\begin{ytableau}
\none & \none & \none & 2 & 2 & 3 \\
\none & 1 & 1 & 2 & 3\\
1 & 1 & 2 & 2 & 3\\
1 & 2 & 3\\
2 & 2 & 3\\
\none
\end{ytableau} \qquad\qquad\qquad
\begin{ytableau}
  \none & \none & 1 & 1 & 2\\
  \none & 1 & 1 & 2 & 2\\
  \none & 1 & 2 & 3 & 3\\
  2 & 2 & 2\\
  2 & 3 & 3\\
  3
\end{ytableau}
  \caption{An RPP $R$ of shape $(6,5,5,3,3)/(3,1)$ on the left and its transpose
    $R'$ on the right.}
  \label{fig:rpp}
\end{figure}

For $\lambda,\mu\in\NN^n$, the set of RPPs of shape $\lm$ is denoted by
$\RPP_{\lm}$. If $\mu\not\in\Par_n$, $\lambda\not\in\Par_n$, or
$\mu\not\subseteq\lambda$, then $\RPP_{\lm}$ is defined to be the empty set. For
$R\in\RPP_{\lm}$, define
\[
\wt(R)=\prod_{i\ge1}x_i^{a_i(R)} t_i^{b_i(R)},
\]
where $a_i(R)$ is the number of columns of $R$ containing an $i$ and $b_i(R)$ is
the number of cells $(i,j)$ such that $(i,j), (i+1,j)\in \lm$ and
$R(i,j)=R(i+1,j)$. For example, if $R$ is the RPP shown in Figure~\ref{fig:rpp}
on the left, then $\wt(R)=x_1^3x_2^5x_3^3t_1t_2^3t_3t_4^2$ and
$\wt(R')=x_1^3x_2^5x_3^5t_1^2t_2t_3t_4$.

Let $x=\{x_1,x_2,\dots\}$ and $t=\{t_1,t_2,\dots\}$ be sets of variables. For
$r\in\NN$ and $s\in\ZZ$, define
\[
X_{(r,s]}=x_{r+1}+x_{r+2}+\dots+x_{s},
\]
where empty sums are zero, i.e., $X_{(r,s]}=0$ if $r\ge s$. In other words,
$X_{(r,s]}$ is the sum of the variables $x_i$ for the integers $i$ in the
interval $(r,s]=\{u\in \mathbb{R}: r<u\le s\}$. We define $T_{(r,s]}$ in the
same way using the variables $t_i$. For integers $i$, we also define
\[
X_i =x_1+x_2+\dots+x_i, \qquad
T_i = t_1+t_2+\dots+t_i,
\]
where empty sums are zero, i.e., $X_i=T_i=0$ if $i\le 0$. Note that if $0\le
r\le s$, then $X_{(r,s]} = X_s-X_r$.

Let $z=\{z_i:i\in I\}$ be a set of variables, where $I\subseteq\NN$. The
\emph{elementary symmetric function} $e_n(z)$ and the \emph{complete homogeneous
  symmetric function} $h_n(z)$, for $n\ge1$, are defined by
\begin{align*}
  e_n(z) &= \sum_{i_1<\dots<i_n \:\:\mathrm{in}\:\: I} z_{i_1} \cdots z_{i_n},\\
  h_n(z) &= \sum_{i_1\le \dots\le i_n \:\:\mathrm{in}\:\: I} z_{i_1} \cdots z_{i_n}.
\end{align*}
We define $e_0(z)=h_0(z)=1$ and $e_k(z)=h_k(z)=0$ for $k<0$.
Note that $e_k(z)=0$ whenever $z=\{z_i:i\in I\}$ satisfies $\left|I\right|<k$.

\subsection{Flagged refined dual stable Grothendieck polynomial}

For $\lambda,\mu\in\NN^n$, the \emph{refined dual stable Grothendieck
  polynomial} $\wg_{\lambda/\mu}(x;t)$ is defined by
\[
  \wg_{\lambda/\mu}(x;t) =
\sum_{R\in \RPP_{\lambda/\mu}} \wt(R).
\]
The polynomials $\wg_\lm(x;t)$ were introduced by Galashin, Grinberg, and Liu
\cite{GGL2016}. They showed that $\wg_\lm(x;t)$ is symmetric in the variables
$x$ but not in the variables $t$.

For $\alpha,\beta,\lambda,\mu\in\NN^n$, define
$\RPP_{\lambda/\mu}^{\row(\alpha,\beta)}$ to be the set of RPPs $R$ of shape
$\lm$ such that $\alpha_i+1\le R(i,j)\le \beta_i$ for all $(i,j)\in\lm$.
Similarly, for $\alpha,\beta\in\NN^n$ and partitions $\lambda,\mu$ with
$\lambda',\mu'\in\NN^n$, define $\RPP_{\lambda/\mu}^{\col(\alpha,\beta)}$ to be
the set of RPPs $R$ of shape $\lm$ such that $\alpha_j+1\le R(i,j)\le \beta_j$
for all $(i,j)\in\lm$.

The \emph{row-flagged refined dual stable Grothendieck polynomial}
$\wg_{\lambda/\mu}^{\row(\alpha,\beta)}(x;t)$ and the \emph{column-flagged refined
  dual stable Grothendieck polynomial}
$\wg_{\lambda/\mu}^{\col(\alpha,\beta)}(x;t)$ are defined by
\begin{align*}
  \wg_{\lambda/\mu}^{\row(\alpha,\beta)}(x;t) &=
\sum_{R\in \RPP_{\lambda/\mu}^{\row(\alpha,\beta)}} \wt(R),\\
  \wg_{\lambda/\mu}^{\col(\alpha,\beta)}(x;t) &=
\sum_{R\in \RPP_{\lambda/\mu}^{\col(\alpha,\beta)}} \wt(R).
\end{align*}
For simplicity we will sometimes omit $(x;t)$ and write
$\wg_{\lambda/\mu}^{\row(\alpha,\beta)}$ and
$\wg_{\lambda/\mu}^{\col(\alpha,\beta)}$.

\subsection{Plethystic substitution}
\label{sec:plethysm}

Let $\Lambda=\Lambda_{\mathbb{Q}}$ denote the ring of symmetric functions with
rational coefficients. The \emph{power sum symmetric functions}
$p_k(x)=x_1^k+x_2^k+\cdots$ generate $\Lambda$ as a $\mathbb{Q}$-algebra. Let
$\mathbb{Q}[[a_1,a_2,\dots]]$ denote the ring of formal power series in
variables $a_1,a_2,\dots$ with rational coefficients. Once
$A\in\mathbb{Q}[[a_1,a_2,\dots]]$ is fixed, the \emph{plethystic substitution}
$f[A]$ for $f\in\Lambda$ is defined by the following rules:
\begin{itemize}
\item for $k\ge1$, $p_k[A]$ is obtained from $A$ by replacing each $a_i$ by
  $a_i^k$,
\item the map $f\mapsto f[A]$ is a ring homomorphism from $\Lambda$ to
  $\mathbb{Q}[[a_1,a_2,\dots]]$.
\end{itemize}

If $A=a_1+\dots+a_n$, then $p_k[A]=a_1^k+\dots+a_n^k=p_k(a_1,\dots,a_n)$, which
implies $f[A]=f(a_1,\dots,a_n)$ for all $f\in\Lambda$. We refer the reader to
\cite{Loehr_2010} for more details on plethystic substitution. We need the
following well known properties of the plethystic substitution.

\begin{prop}\label{prop:plethysm}
  Let $A,B\in \mathbb{Q}[[a_1,a_2,\dots]]$ and $f\in \Lambda$.
  Then 
\begin{align*}
  f[A+B] &=  \sum_{(f)}f_{(1)}[A] f_{(2)}[B],\\
  f[-A] &= (S(f))[A],
\end{align*}
where the Sweedler notation is used and $S$ is the antipode of the Hopf algebra
of symmetric functions. See \cite{grinberg14:hopf_algeb_combin} for more details
on the Sweedler notation and the antipode.
\end{prop}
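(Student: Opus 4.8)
The plan is to prove both identities by a single two-step strategy: show that each side, regarded as a function of $f\in\Lambda$, is a $\mathbb{Q}$-algebra homomorphism from $\Lambda$ into $\mathbb{Q}[[a_1,a_2,\dots]]$, and then, since $\Lambda=\mathbb{Q}[p_1,p_2,\dots]$ is generated as a $\mathbb{Q}$-algebra by the power sums, verify each equality only on the generators $p_k$. Throughout I write $\mathrm{ev}_A\colon\Lambda\to\mathbb{Q}[[a_1,a_2,\dots]]$ for the plethystic evaluation $f\mapsto f[A]$, which is a ring homomorphism by its defining property, and I record the two elementary facts that follow at once from the rule ``$p_k[A]$ is $A$ with each $a_i$ replaced by $a_i^k$'': namely $p_k[A+B]=p_k[A]+p_k[B]$ and $p_k[-A]=-p_k[A]$, both holding because the substitution $a_i\mapsto a_i^k$ is itself a ring endomorphism of $\mathbb{Q}[[a_1,a_2,\dots]]$ and hence additive.

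For the first identity, the left-hand side is the ring homomorphism $f\mapsto\mathrm{ev}_{A+B}(f)$. The right-hand side is the composite
\[
f\longmapsto m\bigl((\mathrm{ev}_A\otimes\mathrm{ev}_B)(\Delta f)\bigr),
\]
where $\Delta$ is the coproduct of $\Lambda$ and $m$ is multiplication in $\mathbb{Q}[[a_1,a_2,\dots]]$. This composite is a ring homomorphism because $\Delta$ is an algebra map (as $\Lambda$ is a bialgebra), $\mathrm{ev}_A\otimes\mathrm{ev}_B$ is an algebra map, and $m$ is an algebra map since $\mathbb{Q}[[a_1,a_2,\dots]]$ is commutative; note also that the Sweedler sum is finite for each $f$, so no convergence question arises. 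It then suffices to compare the two sides on $p_k$. The left side gives $p_k[A+B]=p_k[A]+p_k[B]$, while the right side, using the primitivity $\Delta(p_k)=p_k\otimes 1+1\otimes p_k$ together with $1[A]=1[B]=1$, also gives $p_k[A]+p_k[B]$. Agreeing on the generators, the two homomorphisms coincide.

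For the second identity, the left-hand side is $f\mapsto\mathrm{ev}_{-A}(f)$ and the right-hand side is $f\mapsto(\mathrm{ev}_A\circ S)(f)$. Both are ring homomorphisms: the first by definition, and the second because the antipode $S$ is an algebra homomorphism of $\Lambda$ (the Hopf algebra of symmetric functions is commutative, so $S$ is a genuine homomorphism rather than merely an anti-homomorphism). Checking on $p_k$, the left side gives $p_k[-A]=-p_k[A]$, and the right side gives $(S(p_k))[A]=(-p_k)[A]=-p_k[A]$, using the standard fact $S(p_k)=-p_k$ for the primitive element $p_k$. Hence the two homomorphisms agree on generators, which finishes the argument.

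The only step requiring genuine care is the verification that the right-hand sides are ring homomorphisms in $f$: this is precisely where the bialgebra axiom (that $\Delta$ respects products), the commutativity of the target ring (so that $m$ is an algebra map), and the commutativity of $\Lambda$ (so that $S$ is an algebra map) are used. Once these structural facts are granted, the reduction to power sums is immediate, since each subsequent computation needs only the primitivity of $p_k$ and the additivity of plethystic evaluation in the alphabet.
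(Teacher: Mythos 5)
Your proof is correct. Note that the paper itself gives no proof of this proposition: it is stated as a well-known property of plethystic substitution with a pointer to the Grinberg--Reiner reference, so there is no in-paper argument to compare against. Your argument is the standard one: since $\Lambda=\mathbb{Q}[p_1,p_2,\dots]$ is freely generated by the power sums and the plethystic evaluation $f\mapsto f[A]$ is by definition the unique ring homomorphism with the prescribed values on the $p_k$, it suffices to check that both sides of each identity are $\mathbb{Q}$-algebra homomorphisms in $f$ and that they agree on the $p_k$. You correctly identify the structural inputs needed for the homomorphism claims --- that $\Delta$ is an algebra map, that multiplication in the commutative target is an algebra map, and that $S$ is an algebra (not merely anti-) homomorphism because $\Lambda$ is commutative --- and the generator check reduces to the primitivity of $p_k$, the identity $S(p_k)=-p_k$, and the additivity of $A\mapsto p_k[A]$, all of which you verify. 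The argument is complete.
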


In this paper we only need to compute $f[A]$ and $f[A-B]$ when $f=e_k$ or
$f=h_k$, and both $A$ and $B$ are sums of variables taken from
$x=(x_1,x_2,\dots)$ and $t=(t_1,t_2,\dots)$. If $A$ and $B$ are any formal power
series with integer coefficients, since $Sh_k=(-1)^ke_k$ and $Se_k=(-1)^kh_k$,
we have
\begin{align}
  \label{eq:h_k[-A]}
  h_k[-A]&= (-1)^k e_k[A],\\
  \label{eq:e_k[-A]}
  e_k[-A]&= (-1)^k h_k[A],\\
  \label{eq:h_k}
  h_k[A-B]&= \sum_{i=0}^kh_{k-i}[A](-1)^i e_i[B],\\
  \label{eq:e_k}
  e_k[A-B]&= \sum_{i=0}^ke_{k-i}[A](-1)^i h_i[B].
\end{align}

\section{Main results}
\label{sec:main}

In this section we restate our main results, Theorem~\ref{thm:main} in the
introduction, as two separate theorems, Theorems~\ref{thm:col_flag2} and
\ref{thm:row_flag}, and prove their corollaries. The main results will be proved
in the next two sections.

The following theorem is Theorem~\ref{thm:col_flag3} in the introduction, which
is equivalent to one of the main results.

\begin{thm}\label{thm:col_flag}
  Let $\alpha,\beta\in\NN^n$ and $\mu,\lambda\in\Par_n$. 
If $\alpha_i\le \alpha_{i+1}+1$ and $\beta_i\le \beta_{i+1}+1$ whenever
$\mu_i<\lambda_{i+1}$, then
\[
\wg_{\lambda'/\mu'}^{\col(\alpha,\beta)}(x;t)  = \det \left(
    e_{\lambda_i-\mu_j-i+j}(x_{\alpha_j+1},\dots, x_{\beta_i},t_{\mu_j+1},\dots,t_{\lambda_i-1})
  \right)_{1\le i,j\le n}.
\]
\end{thm}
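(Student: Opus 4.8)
The plan is to prove the identity by showing that both sides satisfy the same recurrence in the column index, matching initial conditions. The combinatorial side $\wg_{\lambda'/\mu'}^{\col(\alpha,\beta)}(x;t)$ counts RPPs of shape $\lambda'/\mu'$ whose entries in column $j$ lie in $\{\alpha_j+1,\dots,\beta_j\}$; equivalently, transposing, we count RPPs of shape $\lambda/\mu$ whose entries in row $j$ are flagged. First I would set up this transposed picture carefully, because the weight $\wt(R)$ is phrased in terms of columns and vertical repeats, and I want the flagging hypotheses $\alpha_i\le\alpha_{i+1}+1$, $\beta_i\le\beta_{i+1}+1$ (imposed whenever $\mu_i<\lambda_{i+1}$) to translate into a clean peeling operation. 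The natural induction is on the last column of $\lambda$ (or first, depending on orientation): I would remove the entries of a single column, express the generating function as a sum over the admissible column fillings, and recognize the resulting factor as an elementary symmetric function in the alphabet $(x_{\alpha_j+1},\dots,x_{\beta_i},t_{\mu_j+1},\dots,t_{\lambda_i-1})$.

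The key structural observation is how the variables $t$ enter. In an RPP, a vertical repeat $R(i,j)=R(i+1,j)$ contributes a factor $t_i$, and the $x$-weight only records which \emph{columns} contain each value. The effect is that within a single column of height $h$, reading the entries from top to bottom, a strictly increasing step contributes an $x$-variable and an equal step contributes a $t$-variable; the generating function for a single column with entries bounded below by $\alpha_j+1$ and above by $\beta_i$, spanning rows that force the $t$-indices to run over $t_{\mu_j+1},\dots,t_{\lambda_i-1}$, is exactly $e_k$ evaluated in the mixed alphabet, where $k$ is the column length. This is the crucial lemma identifying the single-column generating function with an elementary symmetric function in the interleaved $x$- and $t$-variables, and I would prove it by a direct bijection between column fillings and the monomials of $e_k$.

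With the single-column identity in hand, the second step is the determinantal recursion. I would use the Lindström–Gessel–Viennot or a Dodgson-style cofactor expansion to show that the determinant on the right satisfies the same one-column peeling recurrence, with the off-diagonal entries $e_{\lambda_i-\mu_j-i+j}$ accounting for the lattice-path crossings that encode the row-strictness and column-weakness constraints of the RPP. The flagging hypotheses are precisely what guarantee that when two adjacent rows of the skew shape actually touch (the condition $\mu_i<\lambda_{i+1}$), the bounds vary slowly enough that the sign-reversing involution on intersecting path families goes through; this is the standard mechanism behind Wachs-type flagged Jacobi–Trudi formulas, and the convention for the $(i,j)$ entry when $\mu'_j+1>\lambda'_i-1$ handles the degenerate alphabet.

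The main obstacle, I expect, is the $t$-variables, specifically verifying that the vertical-repeat weights combine correctly across the whole tableau rather than column by column. Because $b_i(R)$ counts equalities between rows $i$ and $i+1$ globally, one must check that the single-column computations do not double-count or miss the $t$-contributions at column boundaries, and that the range $t_{\mu_j+1},\dots,t_{\lambda_i-1}$ in the $(i,j)$ entry correctly reflects where repeats are allowed once the shape is skew and flagged. I would isolate this bookkeeping as a separate lemma, likely proving the single-column identity first in full generality (arbitrary lower and upper bounds and arbitrary $t$-range) so that the determinantal step can treat each matrix entry as a black box. Once that lemma is established, the rest should reduce to the familiar flagged Jacobi–Trudi argument, with the hypotheses on $\alpha,\beta$ playing exactly the role they play in Theorem~\ref{thm:wachs}.
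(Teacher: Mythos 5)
Your single-column lemma is correct and is indeed the reason the matrix entries have the form they do: a column of $\lambda'/\mu'$ occupying rows $\mu_i+1,\dots,\lambda_i$ with values in $\{\alpha_i+1,\dots,\beta_i\}$ has generating function $e_{\lambda_i-\mu_i}(x_{\alpha_i+1},\dots,x_{\beta_i},t_{\mu_i+1},\dots,t_{\lambda_i-1})$, with strict steps contributing $x$-variables and repeats contributing position-indexed $t$-variables. Your overall plan of matching a peeling recurrence on both sides is also in the spirit of the paper's proof. But the step where you invoke a Lindstr\"om--Gessel--Viennot sign-reversing involution (or a Wachs-type path argument) to handle the determinant is a genuine gap, and I do not believe it can be repaired in the form you describe. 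The obstruction is exactly the feature you identified as the ``main obstacle'': the $t$-variables are indexed by \emph{row position}, not by entry value. When two lattice paths encoding columns intersect and you swap their tails, the multiset of $x$-steps is preserved but the rows at which repeats occur change, so the $t$-weight is not preserved; the standard involution is not weight-preserving and the cancellation argument breaks down. Moreover, your closing claim that the hypotheses on $\alpha,\beta$ ``play exactly the role they play in Theorem~\ref{thm:wachs}'' is contradicted by Remark~\ref{rem:wachs}: the conditions here ($\alpha_i\le\alpha_{i+1}+1$) are genuinely different from Wachs's ($\alpha_i-\mu_i\le\alpha_{i+1}-\mu_{i+1}+1$), and the formula is \emph{false} under Wachs's conditions, so the ``standard mechanism behind Wachs-type flagged Jacobi--Trudi formulas'' cannot be what makes this work.

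The paper circumvents all of this by never constructing an involution. It fixes a total order $\prec$ on the cells of $\lm$ (right-to-left by column, top-to-bottom within a column), and for each partial filling $R_0$ of an initial segment $\rho$ it defines a modified determinant $\EE^{\alpha,\beta}_{\lambda,\mu}(R_0)=\owt(R_0)\,E^{\alpha,B(R_0,\beta)}_{\lambda-\rho,\mu}(C(\rho))$, where the upper flags are replaced by the entries already placed and certain rows of the matrix are switched to a shifted variant $\ove$. It then shows that this determinant and the generating function $\oR^{\alpha,\beta}_{\lambda,\mu}(R_0)$ satisfy the identical one-cell-at-a-time recurrence (Proposition~\ref{prop:rec RPP R0}), the determinant side following from the elementary plethystic identity $e_k[Z]=e_k[Z-z]+z\,e_{k-1}[Z-z]$ applied to a single row together with multilinearity, plus vanishing lemmas (Lemmas~\ref{lem:equal rows} and~\ref{lem:s}) that terminate the telescoping. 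The flag hypotheses enter precisely in those vanishing lemmas, not through any path-crossing argument. Finally, the statement you are proving is deduced from the plethystic version (Theorem~\ref{thm:col_flag2}) by a separate determinant comparison (Proposition~\ref{prop:equivalence}), since the entries of the two matrices differ when $\lambda_i\le\mu_j$. To salvage your approach you would need either a weight-preserving involution compatible with position-indexed $t$-steps (which is the hard open part of your plan) or to replace the LGV step with an algebraic recurrence of the above kind.
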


The following theorem is the first main result in this paper. Using simple
determinant evaluation techniques we show that Theorem~\ref{thm:col_flag} is
equivalent to this theorem, see Proposition~\ref{prop:equivalence}.

\begin{thm} \label{thm:col_flag2}
  Let $\alpha,\beta\in\NN^n$ and $\mu,\lambda\in\Par_n$. 
If $\alpha_i\le \alpha_{i+1}+1$ and $\beta_i\le \beta_{i+1}+1$ whenever
$\mu_i<\lambda_{i+1}$, then
  \[
  \wg_{\lambda'/\mu'}^{\col(\alpha,\beta)}(x;t) = \det \left(
    e_{\lambda_i-\mu_j-i+j}[X_{(\alpha_j,\beta_i]}+T_{\lambda_i-1}-T_{\mu_j}]
  \right)_{1\le i,j\le n}.
\]
\end{thm}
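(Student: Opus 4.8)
The plan is to deduce Theorem~\ref{thm:col_flag2} from Theorem~\ref{thm:col_flag} by proving that the two determinants are literally equal as an identity of matrices; this is the content of Proposition~\ref{prop:equivalence}. Write $A_{ij}=e_{\lambda_i-\mu_j-i+j}[X_{(\alpha_j,\beta_i]}+T_{\lambda_i-1}-T_{\mu_j}]$ for the $(i,j)$ entry in Theorem~\ref{thm:col_flag2} and $B_{ij}=e_{\lambda_i-\mu_j-i+j}(x_{\alpha_j+1},\dots,x_{\beta_i},t_{\mu_j+1},\dots,t_{\lambda_i-1})$ for the corresponding entry in Theorem~\ref{thm:col_flag}. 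Since these use the same flag hypothesis, it suffices to establish the unconditional identity $\det(A_{ij})=\det(B_{ij})$.

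First I would compare the two matrices entrywise. Observe that $B_{ij}=e_{\lambda_i-\mu_j-i+j}[X_{(\alpha_j,\beta_i]}+T_{(\mu_j,\lambda_i-1]}]$, so the entries can differ only in the treatment of the $t$-part. When $\mu_j<\lambda_i$, the formal difference $T_{\lambda_i-1}-T_{\mu_j}$ is the genuine variable sum $t_{\mu_j+1}+\dots+t_{\lambda_i-1}$, the plethysm collapses to an ordinary evaluation, and $A_{ij}=B_{ij}$. When $\mu_j\ge\lambda_i$ and $i\ge j$, the degree $\lambda_i-\mu_j-i+j$ is negative and both entries vanish. The only genuine discrepancy arises when $i<j$ and $\mu_j\ge\lambda_i$; since $\mu_j\le\mu_i\le\lambda_i$ this forces $\lambda_i=\dots=\lambda_j=\mu_i=\dots=\mu_j$, a constant block corresponding to empty rows of $\lm$, and there the expansion \eqref{eq:e_k} shows that $A_{ij}$ and $B_{ij}$ differ only through the single extra variable $t_{\lambda_i}$ in the plethystic argument.

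The crux is to show that these discrepancies never affect the determinant. Put $c_i=\lambda_i-i$ and $d_j=\mu_j-j$; both are strictly decreasing, and every entry vanishes unless $c_i\ge d_j$. For a discrepancy position $(i,j)$ the row $i$ lies in the constant block, so $\lambda_i=\mu_i$, i.e.\ $c_i=d_i$. Then for any row $r>i$ and any column $s\le i$ strict monotonicity gives $c_r<c_i=d_i\le d_s$, so the $(r,s)$ entry is zero; hence the columns $\{1,\dots,i\}$ can be filled only by rows in $\{1,\dots,i\}$. Consequently, in any permutation $\sigma$ with $\sigma(i)=j>i$ (that is, one using a discrepancy position), the $i$ columns $\{1,\dots,i\}$ would have to be matched by the $i-1$ rows $\{1,\dots,i\}\setminus\{i\}$, which is impossible; such $\sigma$ contributes $0$ to both $\det(A_{ij})$ and $\det(B_{ij})$. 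Therefore the determinants agree and Theorem~\ref{thm:col_flag2} follows.

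I expect this final monotonicity step to be the main obstacle, since it is the only place where the two formulas genuinely diverge and where one must verify that the empty-block entries are determinant-irrelevant; everywhere else the matrices coincide on the nose. As a self-contained alternative that avoids assuming Theorem~\ref{thm:col_flag}, one could instead prove \eqref{eq:main1} directly by showing that both sides satisfy the same recurrence obtained by peeling off one column of $\lambda'/\mu'$ (the generating function of a single weakly increasing column, with repeats weighted by the $t$-variables, being precisely an elementary symmetric function in the relevant $x$'s and $t$'s), or by a Lindström--Gessel--Viennot argument; there the harder point would be the bookkeeping of the refined $t$-weight across the peeled boundary and checking that the hypotheses $\alpha_i\le\alpha_{i+1}+1$ and $\beta_i\le\beta_{i+1}+1$ are exactly what make the recurrence valid.
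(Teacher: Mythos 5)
Your main route is circular. Theorem~\ref{thm:col_flag} is not an available prior result: it is Grinberg's conjecture, and the paper establishes it only \emph{as a consequence} of Theorem~\ref{thm:col_flag2} together with Proposition~\ref{prop:equivalence} --- exactly the equivalence you invoke, but run in the opposite direction. Your entrywise comparison and the permutation argument (entries differ only when $i<j$ and $\mu_j\ge\lambda_i$, which forces $\lambda_i=\mu_i$; strict monotonicity of $\lambda_r-r$ and $\mu_s-s$ then shows that any permutation using such a position contributes $0$) is a correct and essentially faithful reconstruction of the proof of Proposition~\ref{prop:equivalence}. But that proposition only shows the two determinants are equal to \emph{each other}; it does not show that either equals $\wg_{\lambda'/\mu'}^{\col(\alpha,\beta)}(x;t)$. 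Without an independent proof of Theorem~\ref{thm:col_flag}, nothing has been proved.

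Your fallback sketch is the right direction --- the paper does prove Theorem~\ref{thm:col_flag2} by showing that both sides satisfy a common recurrence --- but it is far from a proof and the actual argument is more delicate than "peeling off one column." The paper peels off one \emph{cell} at a time in the order $\prec$ (down each column, columns right to left), and the determinant side must be augmented to a family $E^{\alpha,\beta}_{\lambda,\mu}(C)$ in which a subset $C$ records which rows have been started, with the entries in rows of $C$ replaced by the shifted versions $\ove^{\alpha,\beta}_{\lambda,\mu}(i,j)=e_{\lambda_i-i-\mu_j+j}[X_{(\alpha_j,\beta_i-1]}+T_{\lambda_i}-T_{\mu_j}]$; the single-cell step is Lemma~\ref{lem:rec e}, and matching the summation range $a\le k\le\wb_r$ forces the vanishing statements of Lemmas~\ref{lem:equal rows} and~\ref{lem:s} (where the hypothesis $\alpha\in\QPar_n$, i.e.\ $\alpha_i\le\alpha_{i+1}+1$, is used in an essential way). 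One also needs the preliminary reductions of Theorem~\ref{thm:flag e2} to reach the case $\mu<\lambda$, $\alpha<\beta$. None of this bookkeeping is present in your sketch, so as written the proposal has a genuine gap: the only fully argued step proves the equivalence of the two formulas, not the theorem itself.
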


Note that the $(i,j)$ entry of the matrix in Theorem~\ref{thm:col_flag} 
can be written as
\[
e_{\lambda_i-\mu_j-i+j}(x_{\alpha_j+1},\dots, x_{\beta_i},t_{\mu_j+1},\dots,t_{\lambda_i-1})
= e_{\lambda_i-\mu_j-i+j}[X_{(\alpha_j,\beta_i]}+T_{(\mu_j,\lambda_i-1]}].
\]
If we replace $T_{(\mu_j,\lambda_i-1]}$ by $T_{\lambda_i-1}-T_{\mu_j}$, we
obtain Theorem~\ref{thm:col_flag2}. However, unlike the $t$ variables, we cannot
replace $X_{(\alpha_j,\beta_i]}$ by $X_{\beta_i}-X_{\alpha_j}$.

The following theorem is the second main result, which is a dual version of
Theorem~\ref{thm:col_flag2}.

\begin{thm}\label{thm:row_flag}
  Let $\alpha,\beta\in\NN^n$ and $\mu,\lambda\in\Par_n$. 
If $\alpha_i\le \alpha_{i+1}$ and $\beta_i\le \beta_{i+1}$ whenever
$\mu_i<\lambda_{i+1}$, then
\[
\wg_{\lambda/\mu}^{\row(\alpha,\beta)}(x;t) = \det \left(
      h_{\lambda_i-\mu_j-i+j}[X_{(\alpha_j,\beta_i]}+T_{i-1}-T_{j-1}]
      \right)_{1\le i,j\le n}.
\]
\end{thm}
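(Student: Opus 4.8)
The plan is to prove Theorem~\ref{thm:row_flag} by induction on $n$, showing that both sides obey the same recurrence obtained by peeling off the top row of a reverse plane partition. This parallels (and dualizes) the argument for the column-flagged case in Theorem~\ref{thm:col_flag2}, and as a sanity check it degenerates correctly: when every $t_i=0$ we have $\wg\to s$ and $T_k=0$, so the $(i,j)$ entry becomes $h_{\lambda_i-\mu_j-i+j}(x_{\alpha_j+1},\dots,x_{\beta_i})$ and the statement collapses to Wachs's formula \eqref{eq:wachs1}, whose hypotheses $\alpha_i\le\alpha_{i+1}$ and $\beta_i\le\beta_{i+1}$ (for $\mu_i<\lambda_{i+1}$) are exactly those assumed here. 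The base case $n=1$ is immediate: an RPP of shape $\lambda_1/\mu_1$ with entries in $[\alpha_1+1,\beta_1]$ is just a weakly increasing row of length $\lambda_1-\mu_1$, it carries no vertical ties, and so is generated by $h_{\lambda_1-\mu_1}(x_{\alpha_1+1},\dots,x_{\beta_1})=h_{\lambda_1-\mu_1}[X_{(\alpha_1,\beta_1]}+T_0-T_0]$, which is the $1\times1$ determinant.

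For the inductive step I would fix the top row and sum over its fillings. Writing $\widehat\lambda=(\lambda_2,\dots,\lambda_n)$, $\widehat\mu=(\mu_2,\dots,\mu_n)$ and similarly $\widehat\alpha,\widehat\beta$, deleting row $1$ leaves an element of $\RPP_{\widehat\lambda/\widehat\mu}^{\row(\widehat\alpha,\widehat\beta)}$, whose hypotheses are inherited from those of $(\alpha,\beta,\lambda,\mu)$. The one genuinely new phenomenon is that the weight does not factor. Since the top entry is smallest in its column, for each column $c$ lying in both row $1$ and row $2$ the entry $R(1,c)$ is either strictly smaller than $R(2,c)$, contributing a fresh variable $x_{R(1,c)}$ to some $a_v(R)$, or equal to $R(2,c)$, contributing instead a factor $t_1$ from the vertical tie counted by $b_1$. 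Thus $\wt(R)$ equals a top-row contribution times $\wt(\widehat R)$, but the top-row contribution depends on row $2$ through these ties, so the sum over top rows is not simply $h_{\lambda_1-\mu_1}[\cdots]$ times the generating function of the deleted shape.

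The crux is to resolve this coupling by plethystic substitution. I would expand the determinant along its first row, giving
\[
\det(M)=\sum_{j=1}^{n}(-1)^{1+j}\,h_{\lambda_1-\mu_j+j-1}[X_{(\alpha_j,\beta_1]}-T_{j-1}]\,\det\!\big(M^{(1,j)}\big),
\]
where $M^{(1,j)}$ is the minor omitting row $1$ and column $j$, and then apply \eqref{eq:h_k} to write $h_k[X_{(\alpha_j,\beta_1]}-T_{j-1}]=\sum_{i\ge0}h_{k-i}(x_{\alpha_j+1},\dots,x_{\beta_1})\,(-1)^i e_i(t_1,\dots,t_{j-1})$. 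The signed factors $(-1)^i e_i(t_1,\dots,t_{j-1})$ are exactly an inclusion--exclusion over the columns in which the top row ties with lower rows, so that the alternating sum over $j$ cancels the overcounting produced by recording which lower rows the top row shares values with. Matching this expansion term by term against the combinatorial sum over top-row fillings -- using the flag hypotheses to guarantee that $[\alpha_1+1,\beta_1]$ and $[\alpha_2+1,\beta_2]$ overlap consistently whenever rows $1$ and $2$ share a column, and that $\widehat\alpha,\widehat\beta$ again satisfy the hypotheses -- reduces the identity to the $(n-1)\times(n-1)$ case, which holds by induction.

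The main obstacle I anticipate is precisely this bookkeeping of the vertical ties: showing that the plethystically shifted, alternating entries $h_k[X_{(\alpha_j,\beta_i]}+T_{i-1}-T_{j-1}]$ reassemble, after cofactor expansion, into the $t_1$-weighted tie pattern between the top row and the rest. Making this rigorous will likely require an auxiliary lemma identifying $h_k[X_{(r,s]}+T]$ as the generating function for a single row that may repeat, with $t$-weight, the values it shares with the row beneath it, together with a careful check that the inequalities $\alpha_i\le\alpha_{i+1}$ and $\beta_i\le\beta_{i+1}$ (for $\mu_i<\lambda_{i+1}$) are exactly what force the sign cancellations -- equivalently, the corresponding non-crossing condition -- to hold.
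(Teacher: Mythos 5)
Your proposal is a plan rather than a proof: the entire difficulty of the theorem is concentrated in the step you defer (``Making this rigorous will likely require an auxiliary lemma\dots together with a careful check\dots''), namely showing that the cofactor expansion along the first row, with the factors $(-1)^i e_i(t_1,\dots,t_{j-1})$ acting as an inclusion--exclusion over ties, reassembles into the tie-weighted sum over top-row fillings. Nothing in the proposal establishes this, and there are concrete obstructions to the route as described. First, the minor $M^{(1,j)}$ has rows indexed by $2,\dots,n$ but columns indexed by $\{1,\dots,n\}\setminus\{j\}$, so for $j>1$ it is \emph{not} of Jacobi--Trudi form for any skew shape $\widehat\lambda/\widehat\mu$; you cannot simply invoke the $(n-1)\times(n-1)$ case of the theorem on it. One would instead have to control the full alternating sum over $j$, which is essentially a sign-reversing-involution argument that the flags and the $t$-weights complicate considerably. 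Second, even for the honest sub-shape $(\lambda_2,\dots,\lambda_n)/(\mu_2,\dots,\mu_n)$, the inductive hypothesis produces entries with $T_{i-1}-T_{j-1}$ in the \emph{re-indexed} rows and columns, whereas the relevant submatrix of $M$ carries $T_i-T_j$; likewise a tie between rows $2$ and $3$ of the original RPP contributes $t_2$, but the standalone sub-shape would record it as $t_1$. So the lower part generates $\phi\bigl(\wg_{\widehat\lambda/\widehat\mu}^{\row(\widehat\alpha,\widehat\beta)}\bigr)$ for the shift operator $\phi\colon t_i\mapsto t_{i+1}$, not $\wg_{\widehat\lambda/\widehat\mu}^{\row(\widehat\alpha,\widehat\beta)}$ itself; your reduction ignores this shift.

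For comparison, the paper sidesteps the row-by-row coupling entirely by using a much finer decomposition: cells are added one at a time in the order $\prec$ (right to left by columns, top to bottom within a column), and Proposition~\ref{prop:H rec RPP R0} shows that the generating function $\R^{\alpha,\beta}_{\lambda,\mu}(R_0)$ over partial fillings and the determinant-based quantity $\HH^{\alpha,\beta}_{\lambda,\mu}(R_0)$ satisfy the identical one-cell recursion. On the determinant side this recursion is realized by elementary row operations: Lemma~\ref{lem:h=h+xh} peels a single $x$-variable out of one plethystic argument, Lemma~\ref{lem:H=H+tH} produces the factor $t_{r-1}$ exactly when the new entry ties with the cell above, and Lemma~\ref{lem:ss} kills the boundary term. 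The reduction to the case $\alpha<\beta$, $\mu<\lambda$, $\alpha,\beta\in\RPar_n$ is then done by block-factoring the determinant (with the shift $\phi^k$ appearing explicitly). If you want to pursue your top-row expansion, you would need to prove the auxiliary identity you only gesture at; as it stands, the proposal does not constitute a proof.
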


Theorems~\ref{thm:col_flag2} and~\ref{thm:row_flag} combined yield
Theorem~\ref{thm:main}. Note that, for the variables in the plethystic
substitution in Theorem~\ref{thm:row_flag}, there are four ways of choosing the
$x$ and $t$ variables from $\{X_{(\alpha_j,\beta_i]},X_{\beta_i}-X_{\alpha_j}\}$
and $\{T_{(j-1,i-1]},T_{i-1}-T_{j-1}\}$, respectively. In contrast to
Theorem~\ref{thm:col_flag2}, the choice in Theorem~\ref{thm:row_flag} is the
only one that gives a correct formula.

\begin{remark}\label{rem:wachs}
  Recall that in the formula \eqref{eq:wachs2} the assumption is
  $\alpha_i-\mu_i\le \alpha_{i+1}-\mu_{i+1}+1$ and $\beta_i-\lambda_i\le
  \beta_{i+1}-\lambda_{i+1}+1$ whenever $\mu_i<\lambda_{i+1}$. On the other hand
  the assumption in Theorem~\ref{thm:col_flag2} is $\alpha_i\le \alpha_{i+1}+1$
  and $\beta_i\le \beta_{i+1}+1$ whenever $\mu_i<\lambda_{i+1}$. We cannot replace
  the assumption in Theorem~\ref{thm:col_flag2} by that in \eqref{eq:wachs2}.
  For example, if $\lambda=(3,3)$, $\mu=(2)$, $\alpha=(2,0)$, and $\beta=(2,2)$,
  then $\wg_{\lambda'/\mu'}^{\col(\alpha,\beta)}(x;t)=0$ but
 \[
 \det \left(
    e_{\lambda_i-\mu_j-i+j}[X_{(\alpha_j,\beta_i]}+T_{\lambda_i-1}-T_{\mu_j}]
  \right)_{1\le i,j\le n} = \det
   \begin{pmatrix}
0 & x_1x_2t_1t_2 \\
1 & e_3(x_1,x_2,t_1,t_2)
   \end{pmatrix} \ne 0.
 \]
\end{remark}

\begin{remark}
  In Theorem~\ref{thm:row_flag} the assumption $\alpha_i\le \alpha_{i+1}$ and
  $\beta_i\le \beta_{i+1}$ whenever $\mu_i<\lambda_{i+1}$ is necessary. For
  example, if $\lambda=(2,2)$, $\mu=(1)$, $\alpha=(1,0)$, and $\beta=(1,1)$,
  then $\wg_{\lambda/\mu}^{\row(\alpha,\beta)}=0$ but
 \[
   \det \left(
     h_{\lambda_i-\mu_j-i+j}[X_{(\alpha_j,\beta_i]}+T_{i-1}-T_{j-1}]
      \right)_{1\le i,j\le n} = \det
   \begin{pmatrix}
0 & x_{1}^{3} - x_{1}^{2} t_{1} \\
1 & x_1^2
   \end{pmatrix} \ne 0.
 \]

 Moreover, the assumption $\lambda,\mu\in\Par_n$ is also necessary. If
 $\lambda=(1,1)$, $\mu=(0,1)$, $\alpha=(0,0)$, and $\beta=(1,1)$, then
 $\wg_{\lambda/\mu}^{\row(\alpha,\beta)}=0$ but
 \[
   \det \left(
     h_{\lambda_i-\mu_j-i+j}[X_{(\alpha_j,\beta_i]}+T_{i-1}-T_{j-1}]
      \right)_{1\le i,j\le n} = \det
   \begin{pmatrix}
x_1 & x_{1} - t_{1} \\
1 & 1
   \end{pmatrix} \ne 0.
 \]
\end{remark}

Let $(a^n)$ denote the sequence $(a,a,\dots,a)$ consisting of $n$ $a$'s. If we
set $\alpha=(0^n)$ and $\beta=(b^n)$ and let $b\to\infty$ in
Theorem~\ref{thm:row_flag}, we obtain the formula \eqref{eq:jt2} in the
introduction, which we state again.

\begin{cor}\label{cor:row_flag}
  For any $\lambda,\mu\in\Par_n$, we have
\[
\wg_{\lambda/\mu}(x;t)= \det \left(
      h_{\lambda_i-\mu_j-i+j}[X+T_{i-1}-T_{j-1}]
      \right)_{1\le i,j\le n}.
\]
\end{cor}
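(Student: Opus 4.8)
The plan is to obtain Corollary~\ref{cor:row_flag} as the specialization $\alpha=(0^n)$, $\beta=(b^n)$ of Theorem~\ref{thm:row_flag}, followed by letting $b\to\infty$. First I would verify the hypotheses of Theorem~\ref{thm:row_flag} for these choices. With $\alpha_i=0$ and $\beta_i=b$ for all $i$, we have $\alpha_i=\alpha_{i+1}$ and $\beta_i=\beta_{i+1}$, so $\alpha_i\le\alpha_{i+1}$ and $\beta_i\le\beta_{i+1}$ hold for every $i$ (in particular whenever $\mu_i<\lambda_{i+1}$), for any value of $b$. Since $X_{(\alpha_j,\beta_i]}=X_{(0,b]}$ here, Theorem~\ref{thm:row_flag} yields
\[
\wg_{\lambda/\mu}^{\row((0^n),(b^n))}(x;t)=\det\left(h_{\lambda_i-\mu_j-i+j}[X_{(0,b]}+T_{i-1}-T_{j-1}]\right)_{1\le i,j\le n}
\]
for every $b\ge 0$.

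Next I would identify both sides in the limit $b\to\infty$, understood as a limit of formal power series. On the left, because RPPs are fillings by positive integers, the condition $\alpha_i+1\le R(i,j)\le\beta_i$ becomes simply $R(i,j)\le b$, so $\RPP_{\lambda/\mu}^{\row((0^n),(b^n))}$ is the set of RPPs of shape $\lm$ with all entries at most $b$. These sets increase with $b$ and their union is $\RPP_{\lambda/\mu}$. Moreover, any RPP contributing a fixed monomial has all entries bounded by the largest index occurring in that monomial, so for $b$ large enough the coefficient of that monomial in $\wg_{\lambda/\mu}^{\row((0^n),(b^n))}(x;t)$ already equals its coefficient in $\wg_{\lambda/\mu}(x;t)$; hence the left-hand side converges coefficient-wise to $\wg_{\lambda/\mu}(x;t)$. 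On the right, $X_{(0,b]}=X_b=x_1+\cdots+x_b$ converges coefficient-wise to $X=x_1+x_2+\cdots$, so each entry $h_{\lambda_i-\mu_j-i+j}[X_b+T_{i-1}-T_{j-1}]$ stabilizes to $h_{\lambda_i-\mu_j-i+j}[X+T_{i-1}-T_{j-1}]$; as the determinant is a fixed polynomial in its entries, the right-hand side converges to the determinant in the statement.

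The argument is essentially a routine specialization, and there is no genuine obstacle. The only point requiring care is making the passage to the limit precise: one must read it as a coefficient-wise limit of formal power series and check the stabilization of each coefficient separately on the two sides, using that every monomial has bounded support on the left and that plethystic substitution with $X_b$ stabilizes to plethystic substitution with $X$ on the right.
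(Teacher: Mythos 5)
Your proposal is correct and follows essentially the same route as the paper: specialize Theorem~\ref{thm:row_flag} to $\alpha=(0^n)$, $\beta=(b^n)$, identify the left side with $\wg_{\lm}$ restricted to entries at most $b$, and let $b\to\infty$ coefficient-wise on both sides. The paper's proof is just a terser version of this same argument.
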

\begin{proof}
  By definition of $\wg_{\lambda/\mu}^{\row(\alpha,\beta)}(x;t)$ and
  Theorem~\ref{thm:row_flag},
\begin{align*}
  \wg_{\lambda/\mu}(x;t)
  &= \lim_{b\to\infty}\wg_{\lm}((x_1,\dots,x_b,0,0,\dots);t)\\
&= \lim_{b\to\infty}\wg^{\row((0^n),(b^n))}_{\lm}(x;t)\\
  &=\lim_{b\to\infty} \det \left(
      h_{\lambda_i-\mu_j-i+j}[X_{(0,b]}+T_{i-1}-T_{j-1}]
    \right)_{1\le i,j\le n}\\
  &=\det \left(
      h_{\lambda_i-\mu_j-i+j}[X+T_{i-1}-T_{j-1}]
      \right)_{1\le i,j\le n}.
\end{align*}
\end{proof}

Similarly, Theorem~\ref{thm:JT} follows from Theorem~\ref{thm:col_flag}, and
\eqref{eq:jt1} follows from \eqref{eq:main1}.

Theorem~\ref{thm:JT2} is the special case $t_i=1$ of
Corollary~\ref{cor:row_flag}. Amanov and Yeliussizov \cite{AmanovYeliussizov}
showed Theorem~\ref{thm:JT2} using Theorem~\ref{thm:JT} and an involution
$\tau:\Lambda\to\Lambda$ satisfying $\tau(\wg_{\lm}(x;t)) =
\wg_{\lambda'/\mu'}(x;t)$ when all $t_i$ are equal to $1$. Unfortunately there
is no such map for the general $t$. To see this suppose that there were an
algebra homomorphism $\psi:\Lambda\to\Lambda$ satisfying $\psi(\wg_{\lm}(x;t)) =
\wg_{\lambda'/\mu'}(x;t)$. Then it must satisfy
  \[
\psi(h_k(x))=\psi(\wg_{(k)}(x;t)) = \wg_{(1^k)}(x;t) = e_k[X+T_{k-1}]. 
  \]
  Since $h_k(x)=\wg_{(k)}(x;t)=\wg_{(k+1)/(1)}(x;t)$, we must also have
  \[
\psi(h_k(x))=\psi(\wg_{(k+1)/(1)}(x;t)) = \wg_{(1^{k+1})/(1)}(x;t) = e_k[X+T_{k}-T_1]. 
  \]
Since $e_k[X+T_{k-1}]\ne e_k[X+T_{k}-T_1]$, the map $\psi$ cannot exist.

\section{A proof of the Jacobi--Trudi formula for $\wg_{\lambda'/\mu'}^{\col(\alpha,\beta)}$}
\label{sec:proof-jacobi-trudi1}

In this section we prove the Jacobi--Trudi formula for
$\wg_{\lambda'/\mu'}^{\col(\alpha,\beta)}$ in Theorem~\ref{thm:col_flag2}. The
basic idea of the proof is to show that both sides of the equation satisfy the
same recurrence relation. We first introduce several definitions.

A \emph{diagram} is just a (finite) set of pairs $(i,j)$ of positive integers.
Similarly to Young diagrams we also visualize a diagram $\rho$ as an array of
squares where a square is placed in row $i$ and column $j$ for each
$(i,j)\in\rho$. For a diagram $\rho$, define $\rho_i$ to be the number of $j$'s
such that $(i,j)\in\rho$. The $k$th \emph{row} (resp.~\emph{column}) of $\rho$
is the set of cells $(i,j)\in\rho$ with $i=k$ (resp.~\textbf{$j=k$}). For two
diagrams $\sigma$ and $\rho$ with $\rho\subseteq\sigma$, denote by $\sigma-\rho$
their set-theoretic difference, which is also a diagram. If a diagram $\rho$ is
a Young diagram with at most $n$ rows, it is identified with the partition
$(\rho_1,\dots,\rho_n)$ as before. If $R$ is an RPP of shape $\lm$ and
$\rho\subseteq\lm$ is a diagram, the restriction of $R$ to $\rho$ is denoted by
$R|_\rho$. We extend the definition of an RPP of shape $\lm$ to an RPP of shape
$\rho$ for any diagram $\rho$ as follows. A \emph{reverse plane partition (RPP)}
of shape $\rho$ is a filling $R$ of $\rho$ with positive integers such that
$R(i,j)\le R(i',j')$ for all $(i,j),(i',j')\in\rho$ with $i\le i'$ and $j\le
j'$. The notation used for RPPs of shape $\lm$ will be extended to RPPs of shape
$\rho$ in the obvious way. For example, $\RPP_{\rho}$ is the set of RPPs of
shape $\rho$ and $\RPP^{\row(\alpha,\beta)}_{\rho}$ is the set of elements
$R\in\RPP_{\rho}$ with the additional condition that $\alpha_i+1\le R(i,j)\le
\beta_i$ for all $(i,j)\in\rho$.

Let $\mu,\lambda\in\Par_n$ with $\mu\subseteq\lambda$. We define a total order
$\prec$ on the cells in $\lm$ as follows: $(i,j) \prec (i',j')$ if and only if
$j>j'$ or $j=j'$ and $i<i'$. Note that by definition $(i,j) \prec (i',j')$
implies $(i,j)\ne (i',j')$. Denote by $(\lm)^{(m)}$ the set of the first $m$
cells in $\lm$ in the total order $\prec$. Note that $(\lm)^{(m)}$ is not
necessarily a skew shape, see Figure~\ref{fig:rpp2}.

\begin{defn}\label{defn:R0}
  Let $\alpha,\beta\in\NN^n$ and $\mu,\lambda\in\Par_n$ with
  $\mu\subseteq\lambda$. Let $\rho=(\lm)^{(m)}$ for some $0\le m\le|\lm|$ and
  let $R_0\in \RPP^{\row(\alpha,\beta)}_{\rho}$. Then we define
\begin{align*}
  \RPP^{\row(\alpha,\beta)}_{\lm}(R_0) 
  &=\{R\in \RPP^{\row(\alpha,\beta)}_{\lm}: R|_\rho = R_0\},\\
 C(\rho)&=\{1\le i\le n: \rho_i>0\},\\
B(R_0,\beta)&=(\wb_1,\dots,\wb_n),
\end{align*}
where $\wb_i$ is defined by
\[
    \wb_i=
\begin{cases}
  R_0(i,\lambda_i-\rho_i+1), & \mbox{if $i\in C(\rho)$},\\
  \beta_i, & \mbox{if $i\notin C(\rho)$}.
\end{cases}
\]
Note that if $i\in C(\rho)$, then $(i,\lambda_i-\rho_i+1)$ is the leftmost cell
in the $i$th row of $\rho$.
\end{defn}

One may consider an element in $\RPP^{\row(\alpha,\beta)}_{\lm}(R_0)$ as an RPP
in $\RPP^{\row(\alpha,\beta)}_\lm$ that can be obtained from $R_0$ by filling
the remaining cells in $(\lm) - \rho$. The motivation for introducing
$\RPP^{\row(\alpha,\beta)}_{\lm}(R_0)$ is to construct an RPP of shape $\lm$ by
filling the cells in $\lm$ one at a time with respect to the order of the cells
given by $\prec$. This will allow us to find a recurrence relation for a
generating function for restricted RPPs.

Note that each element $\wb_i$ in $B(R_0,\beta)$ acts as an upper bound for the
remaining entries in row $i$ for an RPP in
$\RPP^{\row(\alpha,\beta)}_{\lm}(R_0)$. For example, if $R_0$ is the RPP shown
in Figure~\ref{fig:rpp2}, and $\alpha=(0,0,1,1,2)$ and $\beta=(5,5,6,7,7)$, then
$C(\rho)=\{1,2,3,4\}$ and $B(R_0,\beta)=(3,1,3,4,7)$. Note also that if the
$i$th row of $(\lm)-\rho$ is empty, then the lower and upper bounds for the
entries in row $i$ are irrelevant.

\begin{figure}
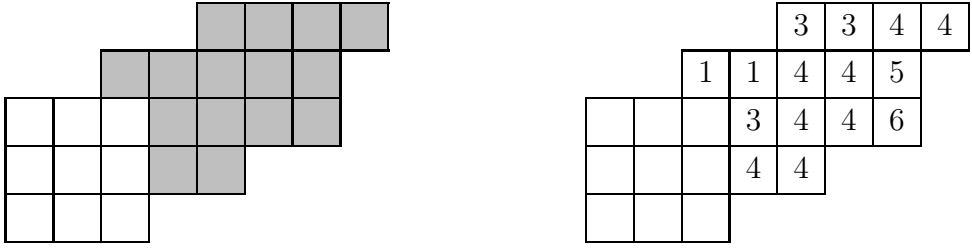

  \centering
\begin{ytableau}
\none & \none & \none &\none  & *(lightgray)  & *(lightgray) & *(lightgray) & *(lightgray)\\
\none & \none & *(lightgray) & *(lightgray) & *(lightgray) &  *(lightgray)& *(lightgray)\\
 &  &  & *(lightgray) & *(lightgray)&  *(lightgray)& *(lightgray)\\
 &  &  & *(lightgray) &*(lightgray) \\
 &  & \\
\none
\end{ytableau} \qquad\qquad\qquad
\begin{ytableau}
\none & \none & \none & \none &  3 & 3 & 4 & 4\\
\none & \none & 1 & 1 & 4 & 4 & 5\\
 &  &  & 3 &4 & 4 & 6\\
 &  &  & 4 & 4 \\
 &  & \\
\none
\end{ytableau}
\caption{The left diagram shows $\lm$ and $\rho=(\lm)^{(m)}$, where
  $\lambda=(8,7,7,5,3)$, $\mu=(4,2)$, $m=15$, and the cells in $\rho$ are the
  gray cells. The right diagram shows an RPP of shape $\rho$.}
  \label{fig:rpp2}
\end{figure}

For $R\in\RPP_{\lm}$, we define 
\[
\wt(R)=\prod_{i\ge1}x_i^{a_i(R)} t_i^{b_i(R)},
\]
where $a_i(R)$ is the number of columns containing an $i$ and $b_i(R)$ is the
number of cells $(i,j)$ such that $(i,j),(i+1,j)\in\lm$ and $R(i,j)=R(i+1,j)$.
We also define
\[
\owt(R)=\prod_{j\ge1}x_j^{\overline{a}_j(R)} t_j^{\overline{b}_j(R)},
\]
where $\overline{a}_j(R)$ is the number of rows containing a $j$ and
$\overline{b}_j(R)$ is the number of cells $(i,j)$ such that
$(i,j),(i,j+1)\in\lm$ and $R(i,j)=R(i,j+1)$.

Note that an RPP $R$ and its transpose $R'$ satisfy
$\owt(R)=\wt(R')$.

\begin{defn}\label{defn:R}
  For any $\alpha,\beta\in\NN^n$, $\mu,\lambda\in\Par_n$ with
  $\mu\subseteq\lambda$, and a fixed RPP $R_0$ of shape $\rho=(\lm)^{(m)}$,
  define
\begin{align*}
  \R^{\alpha,\beta}_{\lambda,\mu}(R_0) 
    &=\sum_{R\in \RPP^{\row(\alpha,\beta)}_{\lm}(R_0)} \wt(R),\\
  \oR^{\alpha,\beta}_{\lambda,\mu}(R_0) 
    &=\sum_{R\in \RPP^{\row(\alpha,\beta)}_{\lm}(R_0)} \owt(R).
\end{align*}
\end{defn}

Note that, by definition, 
\begin{align*}
  \wg_{\lambda/\mu}^{\row(\alpha,\beta)}(x;t) &= \R^{\alpha,\beta}_{\lambda,\mu}(\emptyset),\\
  \wg_{\lambda/\mu}^{\col(\alpha,\beta)}(x;t) &=\oR^{\alpha,\beta}_{\lambda',\mu'}(\emptyset),
\end{align*}
where $\emptyset$ is the unique filling of the empty diagram and we define
$\R^{\alpha,\beta}_{\lambda,\mu}(\emptyset)=\oR^{\alpha,\beta}_{\lambda,\mu}(\emptyset)=0$
if $\mu\not\subseteq\lambda$. In order to avoid using transposes in the proof,
instead of the latter equation above we will consider
\[
  \wg_{\lambda'/\mu'}^{\col(\alpha,\beta)}(x;t)
  =\oR^{\alpha,\beta}_{\lambda,\mu}(\emptyset).
\]

Definitions~\ref{defn:R0} and \ref{defn:R} will also be used in the next section.
We need one more definition for this section.

\begin{defn}\label{defn:E}
For $\alpha,\beta,\mu,\lambda\in\NN^n,C\subseteq[n]$, and $1\le i,j\le n$,
define
\begin{align*}
  e_{\lambda,\mu}^{\alpha,\beta}(i,j) &=
  e_{\lambda_i-i-\mu_j+j}[X_{(\alpha_j,\beta_i]}+T_{\lambda_i-1}-T_{\mu_j}],\\
  \ove_{\lambda,\mu}^{\alpha,\beta}(i,j) &=
  e_{\lambda_i-i-\mu_j+j}[X_{(\alpha_j,\beta_i-1]}+T_{\lambda_i}-T_{\mu_j}],\\
  e_{\lambda,\mu}^{\alpha,\beta}(C;i,j) &=
  \begin{cases}
   e_{\lambda,\mu}^{\alpha,\beta}(i,j), & \mbox{if $i\not\in C$},\\
   \ove_{\lambda,\mu}^{\alpha,\beta}(i,j), & \mbox{if $i\in C$},
  \end{cases}\\
  E_{\lambda,\mu}^{\alpha,\beta}(C) &=
  \det(e_{\lambda,\mu}^{\alpha,\beta}(C;i,j))_{1\le i,j\le n},\\
  E_{\lambda,\mu}^{\alpha,\beta} &=
  \det(e_{\lambda,\mu}^{\alpha,\beta}(i,j))_{1\le i,j\le n}.
\end{align*}
If $\mu\subseteq\lambda$, $\rho=(\lm)^{(m)}$, and
$R_0\in\RPP^{\row(\alpha,\beta)}_\rho$, we define
\begin{equation}
  \label{eq:defEE}
  \EE_{\lambda,\mu}^{\alpha,\beta}(R_0) = \owt(R_0) 
  E_{\lambda-\rho,\mu}^{\alpha,B(R_0,\beta)}(C(\rho)).
\end{equation}
\end{defn}
Note that
\[
E_{\lambda,\mu}^{\alpha,\beta}=E_{\lambda,\mu}^{\alpha,\beta}(\emptyset)
=\EE_{\lambda,\mu}^{\alpha,\beta}(\emptyset),
\]
where the second $\emptyset$ stands for the unique filling of 
the empty diagram $(\lm)^{(0)}$.

Using the notation above Theorem~\ref{thm:col_flag2} can be stated as
\[
E_{\lambda,\mu}^{\alpha,\beta} = \oR_{\lambda,\mu}^{\alpha,\beta}(\emptyset).
\]
Our strategy is to show that both sides of the above equation satisfy the same
recurrence relation.

We will frequently use the following lemmas, which can easily be proved using
elementary linear algebra.

\begin{lem}\label{lem:det=0}
  Let $A=(a_{i,j})_{1\le i,j\le n}$ be a matrix. If there is an integer $1\le
  k\le n$ such that $a_{i,j}=0$ for all $k\le i\le n$ and $1\le j\le k$, then
  $\det(A)=0$. Similarly, if there is an integer $1\le k\le n$ such that
  $a_{i,j}=0$ for all $1\le i\le k$ and $k\le j\le n$, then $\det(A)=0$.
\end{lem}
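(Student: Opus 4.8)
The plan is to expand the determinant via the Leibniz formula and to show that every single term vanishes, using a pigeonhole argument on the permutation indices. The key numerical observation is that in each case the prescribed zero block is ``too large'': the rows it occupies, together with the complement of the columns it occupies, cannot simultaneously support a permutation. I prefer the Leibniz expansion over a linear-dependence argument because $\det(A)=\sum_{\sigma\in\Sym_n}\sign(\sigma)\prod_{i=1}^{n}a_{i,\sigma(i)}$ is valid over any commutative ring, and our entries are power series in the $x$ and $t$ variables; this keeps the argument valid over the coefficient ring without having to pass to a field of fractions.

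For the first statement, suppose $a_{i,j}=0$ whenever $k\le i\le n$ and $1\le j\le k$. First I would observe that if a term $\prod_{i}a_{i,\sigma(i)}$ is nonzero, then for every $i\in\{k,k+1,\dots,n\}$ we must have $a_{i,\sigma(i)}\ne 0$, which forces $\sigma(i)\in\{k+1,\dots,n\}$. Thus $\sigma$ would have to map the $n-k+1$ distinct indices $k,\dots,n$ injectively into the set $\{k+1,\dots,n\}$ of size $n-k$, which is impossible. Hence every term of the Leibniz sum vanishes and $\det(A)=0$.

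For the second statement I would reduce to the first by transposition: if $a_{i,j}=0$ for all $1\le i\le k$ and $k\le j\le n$, then the transpose $A^{T}$ has a vanishing block in rows $k,\dots,n$ and columns $1,\dots,k$, so the first part gives $\det(A)=\det(A^{T})=0$. Alternatively one runs the same pigeonhole directly: a surviving term would force $\sigma(i)\in\{1,\dots,k-1\}$ for each $i\in\{1,\dots,k\}$, again mapping $k$ indices into a set of size $k-1$.

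I expect no genuine obstacle here; this is the ``elementary linear algebra'' the statement advertises. The only point requiring care is the index bookkeeping, namely tracking which of the two blocks has $n-k+1$ rows versus $k$ rows, and hence which pigeonhole count ($n-k+1$ into $n-k$, or $k$ into $k-1$) produces the contradiction.
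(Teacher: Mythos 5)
Your proof is correct. The paper does not actually supply a proof of this lemma (it is stated as following from ``elementary linear algebra''), so there is no argument to compare against; your Leibniz-formula pigeonhole argument is a complete and standard way to establish both claims, the index counts ($n-k+1$ rows forced into $n-k$ columns, respectively $k$ rows forced into $k-1$ columns) are right, and the reduction of the second statement to the first via $\det(A)=\det(A^{T})$ is valid. Your remark that the Leibniz expansion works over any commutative ring is also apt here, since the entries to which the lemma is applied are formal power series in the $x$ and $t$ variables.
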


\begin{lem}\label{lem:det=detdet}
  Let $A=(a_{i,j})_{1\le i,j\le n}$ be a matrix. If there is an integer $0\le
  k\le n$ such that $a_{i,j}=0$ for all $k+1\le i\le n$ and $1\le j\le k$, then
  $\det(A)= \det (a_{i,j})_{1\le i,j\le k} \det (a_{i,j})_{k+1\le i,j\le n}$.
  Similarly, if there is an integer $0\le k\le n$ such that
  $a_{i,j}=0$ for all $1\le i\le k$ and $k+1\le j\le n$, then
  $\det(A)= \det (a_{i,j})_{1\le i,j\le k} \det (a_{i,j})_{k+1\le i,j\le n}$.
\end{lem}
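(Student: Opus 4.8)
The plan is to prove the first statement by means of the Leibniz (permutation) expansion of the determinant, and then to deduce the second statement by transposition. Writing $A$ in block form as $\bigl(\begin{smallmatrix} B & C \\ 0 & D \end{smallmatrix}\bigr)$, where $B=(a_{i,j})_{1\le i,j\le k}$, $D=(a_{i,j})_{k+1\le i,j\le n}$, and the lower-left block vanishes by hypothesis, the goal is to show $\det(A)=\det(B)\det(D)$.

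First I would expand $\det(A)=\sum_{\sigma\in\Sym_n}\sign(\sigma)\prod_{i=1}^n a_{i,\sigma(i)}$ and discard the vanishing terms. A term is nonzero only if $a_{i,\sigma(i)}\ne 0$ for every $i$; in particular, for each $i\in\{k+1,\dots,n\}$ the hypothesis forces $\sigma(i)>k$. Thus any surviving $\sigma$ maps $\{k+1,\dots,n\}$ into itself, and since $\sigma$ is a bijection of $[n]$, it restricts to a permutation $\sigma_2$ of $\{k+1,\dots,n\}$ and correspondingly to a permutation $\sigma_1$ of $\{1,\dots,k\}$.

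The key step is then the multiplicativity of the sign under this decomposition: for such a $\sigma$ one has $\sign(\sigma)=\sign(\sigma_1)\sign(\sigma_2)$, since $\sigma$ is the product of the two commuting permutations obtained by extending $\sigma_1$ and $\sigma_2$ by the identity on the complementary block. Substituting and factoring the resulting sum over pairs $(\sigma_1,\sigma_2)$ gives
\[
\det(A)=\left(\sum_{\sigma_1\in\Sym_k}\sign(\sigma_1)\prod_{i=1}^k a_{i,\sigma_1(i)}\right)\left(\sum_{\sigma_2}\sign(\sigma_2)\prod_{i=k+1}^n a_{i,\sigma_2(i)}\right)=\det(B)\det(D),
\]
where in the second factor $\sigma_2$ ranges over the permutations of $\{k+1,\dots,n\}$. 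The second statement, in which the upper-right block vanishes, follows at once by applying the first statement to the transpose $A^{\mathsf{T}}$, using $\det A=\det A^{\mathsf{T}}$ together with $\det B=\det B^{\mathsf{T}}$ and $\det D=\det D^{\mathsf{T}}$; no separate argument is required.

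Since the result is entirely elementary, there is no genuine obstacle here; the only point that warrants care is the bookkeeping showing that a nonzero permutation term must preserve both diagonal blocks, and the accompanying factorization of the sign.
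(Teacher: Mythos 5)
Your proof is correct. The paper states this lemma without proof, remarking only that it can easily be proved by elementary linear algebra; your Leibniz-expansion argument (a surviving permutation must map $\{k+1,\dots,n\}$ into itself, hence preserves both diagonal blocks, the sign factors as $\sign(\sigma_1)\sign(\sigma_2)$, and the second claim follows by transposition) is exactly the standard elementary argument intended, so there is nothing substantive to compare.
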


\begin{lem}\label{lem:det=det1det}
  Let $A=(a_{i,j})_{1\le i,j\le n}$ be a matrix. Assume that there is
  an integer $1 \le k\le n$ such that $a_{i,j}= \chi(i=j=k)$ for all
  $k\le i\le n$ and $1\le j\le k$. Then,
  $\det(A)= \det (a_{i,j})_{1\le i,j\le k-1} \det (a_{i,j})_{k+1\le i,j\le n}$.
  Furthermore, each nonzero term in the expansion of $\det(A)$
  must contain the $(k,k)$ entry (which is $1$).
\end{lem}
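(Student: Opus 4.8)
The plan is to argue directly from the Leibniz expansion $\det(A) = \sum_{\sigma\in\Sym_n}\sign(\sigma)\prod_{i=1}^n a_{i,\sigma(i)}$, keeping track of which permutations $\sigma$ can contribute a nonzero term. The hypothesis states that in the block consisting of rows $k,\dots,n$ and columns $1,\dots,k$, every entry vanishes except $a_{k,k}=1$; the whole argument is just bookkeeping on where such a $\sigma$ is forced to send its indices.

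First I would show that any contributing $\sigma$ must fix $k$. For each row $i$ with $k+1\le i\le n$ the entries $a_{i,1},\dots,a_{i,k}$ all vanish, so $a_{i,\sigma(i)}\ne 0$ forces $\sigma(i)\ge k+1$. There are exactly $n-k$ such rows and $n-k$ columns in $\{k+1,\dots,n\}$, and $\sigma$ is a bijection, so $\sigma$ restricts to a bijection of $\{k+1,\dots,n\}$ onto itself. Consequently $\sigma(k)\le k$; but $a_{k,1}=\dots=a_{k,k-1}=0$, leaving only $\sigma(k)=k$. This already yields the ``furthermore'' claim that every nonzero term of the expansion uses the $(k,k)$ entry.

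Given $\sigma(k)=k$ together with $\sigma(\{k+1,\dots,n\})=\{k+1,\dots,n\}$, the permutation must also preserve $\{1,\dots,k-1\}$, hence splits as a pair $(\sigma_1,\sigma_2)$ of permutations of $\{1,\dots,k-1\}$ and $\{k+1,\dots,n\}$ with $\sign(\sigma)=\sign(\sigma_1)\sign(\sigma_2)$ and $a_{k,\sigma(k)}=a_{k,k}=1$. Summing $\prod_i a_{i,\sigma(i)}$ over all such $\sigma$ then factors into a product of two independent Leibniz sums, which are precisely $\det(a_{i,j})_{1\le i,j\le k-1}$ and $\det(a_{i,j})_{k+1\le i,j\le n}$, giving the claimed identity.

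There is no genuine obstacle here; the one point requiring care is the counting step that pins down $\sigma(k)=k$, which is exactly where the hypothesis on the \emph{full} bottom-left block (not merely the bottom-left triangle) is used. An alternative route would invoke Lemma~\ref{lem:det=detdet}: since rows $k+1,\dots,n$ meet columns $1,\dots,k$ only in zeros, that lemma gives $\det(A)=\det(a_{i,j})_{1\le i,j\le k}\det(a_{i,j})_{k+1\le i,j\le n}$, after which a cofactor expansion of the first factor along its last row $(0,\dots,0,1)$ collapses it to $\det(a_{i,j})_{1\le i,j\le k-1}$. I would present the permutation argument as the main proof, since it delivers both the factorization and the ``furthermore'' statement simultaneously.
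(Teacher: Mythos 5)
Your proof is correct. The paper states Lemma~\ref{lem:det=det1det} without proof (it is grouped with Lemmas~\ref{lem:det=0} and \ref{lem:det=detdet} as facts that ``can easily be proved using elementary linear algebra''), so there is no argument in the paper to compare against; your Leibniz-expansion bookkeeping is a clean way to do it, and it correctly uses the full hypothesis on the bottom-left block (all of rows $k,\dots,n$ against columns $1,\dots,k$) to force $\sigma(k)=k$, which yields the ``furthermore'' clause at the same time as the factorization.
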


\subsection{Technical lemmas}
\label{sec:technical-lemmas}

In this subsection we give a list of lemmas that will be used to prove
Theorem~\ref{thm:col_flag2}.

From now on, once $n$ is given, let $\epsilon_k=(0,\dots,0,1,0,\dots,0)$ be the
sequence of $n-1$ zeros and one $1$, where the unique $1$ is at position $k$.
For a statement $p$, we define $\chi(p)=1$ if $p$ is true and $\chi(p)=0$
otherwise. 

Let $\QPar_n$ denote the set of $\alpha\in\NN^n$ such that $\alpha_i\le
\alpha_{i+1}+1$ for all $i\in[n-1]$. Note that if $\alpha\in\QPar_n$ and $1\le
i\le j\le n$, then $\alpha_i\le \alpha_j +j-i$.

We note a simple but crucial fact:
If $k \in \ZZ$, and if $A$ is a sum of fewer than $k$ variables,
then
\begin{equation}
\label{eq:ekA=0}
e_k[A] = 0.
\end{equation}
(This is because $e_k[A] = e_k\left(a_1, a_2, \ldots, a_m\right)$
whenever $A = a_1 + a_2 + \cdots + a_m$ is a sum of variables.)

Now we give a list of lemmas.

\begin{lem}\label{lem:e_k[Z]}
  Let $Z$ be a formal power series with integer coefficients and let $z$ be any
  (single) variable. Then, for any integer $k$,
  \[
e_k[Z] = e_k[Z-z] + z e_{k-1}[Z-z].
  \]
\end{lem}
\begin{proof}
  Since $e_k(x)=0$ for $k<0$ and $e_0(x)=1$, the equation is clear when $k\le
  0$. Now we assume $k\ge1$. By \eqref{eq:e_k}, for any $m\ge0$, we have
  \[
e_m[Z-z] = \sum_{i=0}^m e_{m-i}[Z](-1)^ih_i(z)=\sum_{i=0}^m e_{m-i}[Z](-z)^i.
  \]
  Thus
  \[
    e_k[Z-z] + z e_{k-1}[Z-z] = \sum_{i=0}^k e_{k-i}[Z](-z)^i
    -\sum_{i=0}^{k-1} e_{k-i-1}[Z](-z)^{i+1} = e_k[Z],
  \]
  as desired.
\end{proof}

\begin{lem}\label{lem:e_k=0}
If $i\ge j\ge0$ and $k>i-j$, then 
\[
e_k[T_i-T_j] = 0.
\]
\end{lem}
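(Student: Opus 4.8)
The plan is to reduce the statement directly to the crucial fact \eqref{eq:ekA=0}, with essentially no computation. First I would observe that, because $i\ge j\ge 0$, the formal power series $T_i-T_j$ is in fact an honest sum of variables rather than a formal difference. Indeed, since $T_i=t_1+\cdots+t_i$ and $T_j=t_1+\cdots+t_j$ with $j\le i$, the terms $t_1,\dots,t_j$ cancel and we are left with
\[
T_i-T_j = t_{j+1}+t_{j+2}+\cdots+t_i,
\]
which is a sum of exactly $i-j$ variables (the empty sum when $i=j$).

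With this observation in hand the lemma is immediate. The hypothesis $k>i-j$ says precisely that $k$ exceeds the number $i-j$ of variables appearing in the sum $T_i-T_j$. Hence, by \eqref{eq:ekA=0}, which asserts that $e_k[A]=0$ whenever $A$ is a sum of fewer than $k$ variables, we conclude that $e_k[T_i-T_j]=0$, as desired.

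The only point requiring any care — and the place where the hypothesis $i\ge j$ is essential — is the first step, namely recognizing that $T_i-T_j$ is a genuine sum of variables. Were we instead in the situation $j>i$, the expression $T_i-T_j$ would be a true formal difference, to which \eqref{eq:ekA=0} does not apply directly; one would then have to expand via \eqref{eq:e_k}. Because the cancellation above resolves this subtlety at the outset, no antipode or Sweedler-coproduct machinery is needed, and so I do not anticipate any real obstacle in carrying out the argument.
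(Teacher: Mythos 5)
Your proof is correct and is essentially identical to the paper's: both arguments note that $T_i-T_j=t_{j+1}+\cdots+t_i$ is a genuine sum of $i-j$ variables and then invoke the vanishing of $e_k$ on fewer than $k$ variables (the fact recorded as \eqref{eq:ekA=0}). No differences worth noting.
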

\begin{proof}
  By definition we have $e_k[T_i-T_j] = e_k[t_{j+1}+\dots+t_i] =
  e_k(t_{j+1},\dots,t_i)$, which is equal to $0$ because $k > i-j$.
\end{proof}

\begin{lem}\label{lem:initial E}
  Let $\alpha,\beta\in\NN^n$, $\mu\in\Par_n$, and $C\subseteq[n]$. Then
\[
  E^{\alpha,\beta}_{\mu,\mu}(C) = 1.
\] 
\end{lem}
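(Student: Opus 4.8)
The plan is to unpack the definitions and show that the matrix $(e^{\alpha,\beta}_{\mu,\mu}(C;i,j))_{1\le i,j\le n}$ is triangular with $1$'s on the diagonal, so that its determinant is $1$. The crucial observation is that when $\lambda=\mu$, the subscript of the elementary symmetric function in entry $(i,j)$ is $\mu_i-i-\mu_j+j = (\mu_i-\mu_j)-(i-j)$. First I would analyze the diagonal entries: when $i=j$, the subscript is $0$, so $e^{\alpha,\beta}_{\mu,\mu}(i,i) = e_0[\cdots] = 1$, and likewise $\ove^{\alpha,\beta}_{\mu,\mu}(i,i)=e_0[\cdots]=1$, regardless of whether $i\in C$. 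Hence every diagonal entry of the matrix is $1$.

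Next I would show the matrix is lower triangular, i.e.\ that the entries above the diagonal vanish. Fix $i<j$. Since $\mu\in\Par_n$ is a partition, $\mu_i\ge\mu_j$, and combined with $j-i\ge 1$ the subscript satisfies
\[
\mu_i-i-\mu_j+j = (\mu_i-\mu_j)+(j-i) \ge j-i \ge 1,
\]
so it is a strictly positive integer, call it $k=\mu_i-\mu_j+j-i$. The key step is to bound the number of variables appearing in the plethystic argument and invoke \eqref{eq:ekA=0}. For the unbarred entry $e_k[X_{(\alpha_j,\beta_i]}+T_{\mu_i-1}-T_{\mu_j}]$, the argument is a sum of at most $(\beta_i-\alpha_j)$ many $x$-variables plus the $t$-variables $t_{\mu_j+1},\dots,t_{\mu_i-1}$, the latter numbering $\mu_i-1-\mu_j = k-(j-i)$ variables (or fewer, when this count is negative it contributes none). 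The total number of variables is therefore at most $(\beta_i-\alpha_j)+(\mu_i-\mu_j-1)$, and I must check that this is strictly less than $k=\mu_i-\mu_j+j-i$, i.e.\ that $\beta_i-\alpha_j<j-i+1$, equivalently $\beta_i-\alpha_j\le j-i$. This is where the hypotheses on $\alpha,\beta$ enter: I expect the intended reading is that the relevant flag conditions (or the fact that we are free to use any $\alpha,\beta\in\NN^n$ here, since $E^{\alpha,\beta}_{\mu,\mu}(C)$ is being asserted to equal $1$ unconditionally) force the $x$-part to contribute few enough variables.

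The honest difficulty, and the step I expect to be the main obstacle, is precisely this counting for the $x$-variables, because the lemma claims $E^{\alpha,\beta}_{\mu,\mu}(C)=1$ for \emph{arbitrary} $\alpha,\beta\in\NN^n$ with no inequality hypothesis, yet $X_{(\alpha_j,\beta_i]}$ can in principle be a long sum. The resolution I would pursue is that for $i<j$ the $t$-contribution alone already overshoots: note $T_{\mu_i-1}-T_{\mu_j}$ is a sum of $\max(\mu_i-1-\mu_j,0)$ variables, but more importantly I should re-examine whether the $x$-argument is really $X_{(\alpha_j,\beta_i]}$ with a potentially large index gap. If the gap can be large, then $e_k$ need not vanish, so the triangularity argument as stated cannot be the whole story; instead I suspect the correct mechanism is that when $\lambda=\mu$ the \emph{barred} versus \emph{unbarred} distinction and the structure of $e_k[A-B]$ via \eqref{eq:e_k} produce cancellation along off-diagonal entries rather than termwise vanishing. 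I would therefore fall back to a direct determinant evaluation: expand using \eqref{eq:e_k} and Lemma~\ref{lem:det=det1det} to peel off the last row and column, reducing to the $(n-1)\times(n-1)$ case, and finish by induction on $n$, with the base case $n=1$ giving $e_0[\cdots]=1$.

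In summary, the route I favor is induction on $n$ combined with a row/column reduction: establish that the diagonal entries are $1$, use the partition condition $\mu_i\ge\mu_j$ for $i<j$ to control the superdiagonal structure, apply Lemma~\ref{lem:det=det1det} to strip off a corner entry equal to $1$, and recurse. The subtle point requiring care is verifying that the off-diagonal entries behave so that the reduction applies cleanly; the cleanest formulation is to prove the matrix is lower unitriangular after checking the variable-count bound $e_k[A]=0$ via \eqref{eq:ekA=0} in each above-diagonal position, and if that bound fails for some $\alpha,\beta$ I would instead carry the induction through the determinant expansion directly.
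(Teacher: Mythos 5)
You have the right general idea (the matrix is unitriangular), but you analyze the wrong triangle, and this is a genuine gap. You attempt to show the entries \emph{above} the diagonal vanish, i.e.\ those with $i<j$. For $i<j$ the subscript is $\mu_i-\mu_j+j-i\ge j-i\ge 1$, and as you yourself observe, $e_k$ of the argument $X_{(\alpha_j,\beta_i]}+T_{\mu_i-1}-T_{\mu_j}$ need not vanish for arbitrary $\alpha,\beta\in\NN^n$ (take $n=2$, $\mu=(1,0)$, $\alpha=(0,0)$, $\beta=(5,5)$: the $(1,2)$ entry is $e_2(x_1,\dots,x_5)\ne 0$). So the matrix is genuinely \emph{not} lower triangular, the variable-counting route is a dead end, and your fallback ``cancellation along off-diagonal entries'' does not materialize. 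Your proposed induction via Lemma~\ref{lem:det=det1det} would in fact work, but only after one knows that the last row is $(0,\dots,0,1)$ --- which is exactly the below-diagonal vanishing you never establish.

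The missing observation, which is the whole content of the paper's proof, concerns the entries with $i\ge j$: since $\mu$ is a partition, $i\ge j$ gives $\mu_i\le\mu_j$, so the subscript satisfies $\mu_i-i-\mu_j+j\le 0$, with equality if and only if $i=j$. Hence every entry strictly below the diagonal is $e_k[\cdot]$ with $k<0$, which is $0$ by convention, and every diagonal entry is $e_0[\cdot]=1$ (in both the barred and unbarred cases, so membership in $C$ is irrelevant). The matrix is therefore \emph{upper} unitriangular and its determinant is $1$, with no hypothesis on $\alpha$ or $\beta$ and no variable counting needed. The above-diagonal entries are simply never examined.
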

\begin{proof}
  If $i\ge j$, we have $\mu_i-i-\mu_j+j\le 0$, where the equality holds if and
  only if $i=j$. This shows that for all $1\le j\le i\le n$, we have
  $e^{\alpha,\beta}_{\mu,\mu}(C;i,j)= \chi(i=j)$ since
  $e_{\mu_i-i-\mu_j+j}[Y-Z]=\chi(i=j)$ for any sums $Y$ and $Z$ of variables.
  Therefore the matrix $(e^{\alpha,\beta}_{\mu,\mu}(C;i,j))_{1\le i,j\le n}$ is
  upper uni-triangular and $E^{\alpha,\beta}_{\mu,\mu}(C) =
  \det(e^{\alpha,\beta}_{\mu,\mu}(C;i,j))_{1\le i,j\le n}=1$.
\end{proof}

\begin{lem}\label{lem:E(C)=0}
  Let $\alpha,\beta\in\NN^n$ and $\lambda,\mu\in\Par_n$ with
  $\mu\not\subseteq\lambda$. Then for any subset $C\subseteq[n]$,
\[
  E^{\alpha,\beta}_{\lambda,\mu}(C) = 0.
\] 
\end{lem}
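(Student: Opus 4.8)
The plan is to locate a block of zeros in the matrix $\bigl(e_{\lambda,\mu}^{\alpha,\beta}(C;i,j)\bigr)_{1\le i,j\le n}$ large enough to invoke Lemma~\ref{lem:det=0}. Since $\lambda,\mu\in\Par_n$ and $\mu\not\subseteq\lambda$, there is an index $k$ with $\mu_k>\lambda_k$; fix such a $k$. I will show that $e_{\lambda,\mu}^{\alpha,\beta}(C;i,j)=0$ for all $k\le i\le n$ and $1\le j\le k$, which is precisely the hypothesis of Lemma~\ref{lem:det=0} and therefore forces $E^{\alpha,\beta}_{\lambda,\mu}(C)=0$.

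The crucial observation is that both $e_{\lambda,\mu}^{\alpha,\beta}(i,j)$ and $\ove_{\lambda,\mu}^{\alpha,\beta}(i,j)$ are of the form $e_m[\,\cdot\,]$ with the \emph{same} subscript $m=\lambda_i-i-\mu_j+j$, differing only in their plethystic arguments. Recalling that $e_m[A]=0$ whenever $m<0$, it suffices to verify $\lambda_i-i-\mu_j+j<0$ throughout the block, since this handles both the $i\in C$ and $i\notin C$ cases at once. For $k\le i\le n$ and $1\le j\le k$, the monotonicity of partitions gives $\lambda_i\le\lambda_k$ and $\mu_j\ge\mu_k$, while $j\le k\le i$ gives $j-i\le0$. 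Hence
\[
\lambda_i-i-\mu_j+j\le \lambda_k-\mu_k+(j-i)\le \lambda_k-\mu_k\le -1<0,
\]
so every entry in this block vanishes.

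The resulting zero block occupies rows $k,\dots,n$ and columns $1,\dots,k$, exactly the configuration in Lemma~\ref{lem:det=0}, so applying that lemma finishes the proof. I do not foresee a genuine obstacle: the argument reduces to a short monotonicity-plus-pigeonhole computation. The only points demanding care are tracking the signs in the displayed inequality correctly and recognizing that the shared subscript lets a single negativity bound cover both the barred and unbarred entries uniformly, so that no separate treatment of the set $C$ is needed.
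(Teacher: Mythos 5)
Your proof is correct and follows essentially the same route as the paper: locate $k$ with $\mu_k>\lambda_k$, show the block of entries with $k\le i\le n$ and $1\le j\le k$ vanishes because the subscript $\lambda_i-i-\mu_j+j$ is negative, and conclude via Lemma~\ref{lem:det=0}. Your explicit remark that the barred and unbarred entries share the same subscript, so the set $C$ needs no separate treatment, is a point the paper leaves implicit but changes nothing of substance.
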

\begin{proof}
  Since $\mu\not\subseteq\lambda$, there is an integer $1\le k\le n$ such that
  $\mu_k>\lambda_k$. Then for all $k\le i\le n$ and $1\le j\le k$, we have
  $\lambda_i-i-\mu_j+j\le \lambda_k-k-\mu_k+k<0$, and therefore
  $e^{\alpha,\beta}_{\lambda,\mu}(C;i,j)=0$ since $e_{m}(x)=0$ for $m<0$. By
  Lemma~\ref{lem:det=0} this shows that $E^{\alpha,\beta}_{\lambda,\mu}(C) =
  \det(e^{\alpha,\beta}_{\lambda,\mu}(C;i,j))_{1\le i,j\le n}=0$.
\end{proof}

\begin{lem}\label{lem:E=0}
  Let $\alpha,\beta\in\QPar_n$ and $\lambda,\mu\in\Par_n$. Suppose that
  $\alpha_k\ge \beta_k$ and $\mu_k<\lambda_k$ for some $1\le k\le n$. Then
\[
  E^{\alpha,\beta}_{\lambda,\mu} = 0.
\] 
\end{lem}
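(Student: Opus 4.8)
The plan is to exhibit an entire rectangular block of zeros in the upper right corner of the matrix $(e_{\lambda,\mu}^{\alpha,\beta}(i,j))_{1\le i,j\le n}$ and then invoke the second statement of Lemma~\ref{lem:det=0} with this same index $k$. Concretely, I would prove the entrywise vanishing
\[
e_{\lambda,\mu}^{\alpha,\beta}(i,j)=0 \qquad\text{whenever } 1\le i\le k\le j\le n,
\]
which is precisely the hypothesis ($a_{i,j}=0$ for $1\le i\le k$ and $k\le j\le n$) that forces $E_{\lambda,\mu}^{\alpha,\beta}=\det(e_{\lambda,\mu}^{\alpha,\beta}(i,j))_{1\le i,j\le n}=0$.

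To kill a single entry $e_{\lambda,\mu}^{\alpha,\beta}(i,j)=e_{d}[X_{(\alpha_j,\beta_i]}+T_{\lambda_i-1}-T_{\mu_j}]$ with $d=\lambda_i-i-\mu_j+j$, I would count the variables in the plethystic argument and apply \eqref{eq:ekA=0}. For $i\le k\le j$ the partition conditions give $\mu_j\le\mu_k<\lambda_k\le\lambda_i$, so $T_{\lambda_i-1}-T_{\mu_j}=t_{\mu_j+1}+\dots+t_{\lambda_i-1}$ is a genuine (cancellation-free) sum of $\lambda_i-1-\mu_j$ variables; hence the whole argument is an honest sum of variables and \eqref{eq:ekA=0} is applicable. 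The number of $x$-variables is $\max(0,\beta_i-\alpha_j)$, and here is where both hypotheses enter: using $\alpha,\beta\in\QPar_n$ together with $\beta_k\le\alpha_k$ I get $\beta_i\le\beta_k+(k-i)\le\alpha_k+(k-i)$ and $\alpha_j\ge\alpha_k-(j-k)$, so $\beta_i-\alpha_j\le j-i$ and at most $j-i$ of the $x_m$ occur. Adding the two counts, the argument is a sum of at most $(j-i)+(\lambda_i-1-\mu_j)=d-1$ variables, which is strictly fewer than $d$; therefore the entry equals $0$ by \eqref{eq:ekA=0}. Applying Lemma~\ref{lem:det=0} then finishes the proof.

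The only real content is the variable count, and the subtle point — the step most likely to mislead — is the choice of block. One must take the \emph{upper right} rectangle (rows $1,\dots,k$ and columns $k,\dots,n$) matching the $k$ in Lemma~\ref{lem:det=0}, rather than the lower left block; outside the range $i\le k\le j$ the analogous count fails and individual entries are generally nonzero. Within the chosen block the two hypotheses are used exactly once each: $\alpha_k\ge\beta_k$ is what pins the $x$-count down to $\le j-i$ (via the $\QPar$ inequalities through the corner index $k$), and $\mu_k<\lambda_k$ is what guarantees $\mu_j<\lambda_i$, making the $t$-part a genuine nonnegative sum so that \eqref{eq:ekA=0} may legitimately be invoked and the count $d-1<d$ is achieved.
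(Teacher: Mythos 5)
Your proposal is correct and follows essentially the same route as the paper's proof: it kills the same upper-right block ($1\le i\le k\le j\le n$) via the same variable count — at most $j-i$ of the $x_m$ by the $\QPar_n$ inequalities chained through $\beta_k\le\alpha_k$, plus exactly $\lambda_i-1-\mu_j$ of the $t_m$ from $\mu_j\le\mu_k<\lambda_k\le\lambda_i$ — and then applies \eqref{eq:ekA=0} and Lemma~\ref{lem:det=0}.
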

\begin{proof}
  By Lemma~\ref{lem:det=0} it is enough to show that
  $e^{\alpha,\beta}_{\lambda,\mu}(i,j)=0$ assuming $1\le i\le k$ and $k\le j\le
  n$. Since $\alpha,\beta\in\QPar_n$, we have $\alpha_j
  +j-k\ge\alpha_k\ge\beta_k\ge\beta_i-k+i$. Thus $\beta_i-\alpha_j\le j-i$ and
  $X_{(\alpha_j,\beta_i]}$ is a sum of at most $j-i$ variables. Furthermore,
  $T_{\lambda_i-1}-T_{\mu_j}$ is a sum of $\lambda_i - 1 - \mu_j$ variables
  (since $\lambda_i\ge\lambda_k>\mu_k\ge\mu_j$). Using these two facts, we
  obtain
  \[
    e^{\alpha,\beta}_{\lambda,\mu}(i,j)
    =e_{\lambda_i-i-\mu_j+j}[X_{(\alpha_j,\beta_i]} + T_{\lambda_i-1}-T_{\mu_j}]
    = 0,
  \]
  because $X_{(\alpha_j,\beta_i]} + T_{\lambda_i-1}-T_{\mu_j}$ is a sum of at
  most $\lambda_i-i-\mu_j+j-1$ variables
  (and because of \eqref{eq:ekA=0}). This completes the proof.
\end{proof}

\begin{lem}\label{lem:rec e}
  Let $\alpha\in\QPar_n$, $\beta\in\NN^n$, $\mu\in\Par_n$, and
  $\lambda\in\NN^n$. Suppose that $k\in[n]$ is an integer satisfying the
  following conditions:
  \begin{enumerate}
  \item $\mu_k<\lambda_k$,
  \item $\alpha_k<\beta_k$, and
  \item if $1\le j<k$ and $\mu_j<\lambda_k$, then $\alpha_j<\beta_k$.
  \end{enumerate}
   Then for any $C\subseteq[n]$, we have
\[
  E_{\lambda,\mu}^{\alpha,\beta}(C)
  =E_{\lambda,\mu}^{\alpha,\beta-\epsilon_k}(C\setminus\{k\})
  +y E_{\lambda-\epsilon_k,\mu}^{\alpha,\beta}(C\cup\{k\}),
\]
where
\[
y =
\begin{cases}
  x_{\beta_k}, &\mbox{if $k\not\in C$,}\\
  t_{\lambda_k}, &\mbox{if $k\in C$.}
\end{cases}
\]
\end{lem}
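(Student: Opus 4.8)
The plan is to prove the identity by multilinearity of the determinant in a single row, after observing that the three determinants involved come from matrices that differ only in that row. First I would record that the three matrices
\[
E_{\lambda,\mu}^{\alpha,\beta}(C),\qquad E_{\lambda,\mu}^{\alpha,\beta-\epsilon_k}(C\setminus\{k\}),\qquad E_{\lambda-\epsilon_k,\mu}^{\alpha,\beta}(C\cup\{k\})
\]
agree in every row $i\ne k$: the $(i,j)$ entry depends on $\beta$, $\lambda$ and $C$ only through $\beta_i$, $\lambda_i$ and whether $i\in C$, and for $i\ne k$ none of these is altered by subtracting $\epsilon_k$ from $\beta$ or $\lambda$, nor by removing $k$ from or adjoining $k$ to $C$. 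Consequently, by multilinearity in row $k$ it suffices to establish the entrywise identity
\[
e_{\lambda,\mu}^{\alpha,\beta}(C;k,j)=e_{\lambda,\mu}^{\alpha,\beta-\epsilon_k}(C\setminus\{k\};k,j)+y\,e_{\lambda-\epsilon_k,\mu}^{\alpha,\beta}(C\cup\{k\};k,j)
\]
for all $j\in[n]$, since this exhibits row $k$ of the first matrix as the sum of row $k$ of the second and $y$ times row $k$ of the third.

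For the case $k\in C$, where $y=t_{\lambda_k}$, I expect the identity to follow at once from Lemma~\ref{lem:e_k[Z]}. Setting $Z=X_{(\alpha_j,\beta_k-1]}+T_{\lambda_k}-T_{\mu_j}$ and $z=t_{\lambda_k}$, one has $Z-z=X_{(\alpha_j,\beta_k-1]}+T_{\lambda_k-1}-T_{\mu_j}$ because $T_{\lambda_k}=T_{\lambda_k-1}+t_{\lambda_k}$ (note $\lambda_k\ge1$ by condition (1)); the three entries above are then exactly $e_m[Z]$, $e_m[Z-z]$ and $t_{\lambda_k}e_{m-1}[Z-z]$ with $m=\lambda_k-k-\mu_j+j$, so Lemma~\ref{lem:e_k[Z]} gives the equality for every $j$ with no further hypotheses.

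The case $k\notin C$, where $y=x_{\beta_k}$, is where conditions (2) and (3) enter and is the main obstacle. Writing $W=T_{\lambda_k-1}-T_{\mu_j}$ and $m=\lambda_k-k-\mu_j+j$, the three entries become $e_m[X_{(\alpha_j,\beta_k]}+W]$, $e_m[X_{(\alpha_j,\beta_k-1]}+W]$ and $x_{\beta_k}e_{m-1}[X_{(\alpha_j,\beta_k-1]}+W]$. If $\alpha_j<\beta_k$ then $X_{(\alpha_j,\beta_k]}=X_{(\alpha_j,\beta_k-1]}+x_{\beta_k}$, so Lemma~\ref{lem:e_k[Z]} with $z=x_{\beta_k}$ applies verbatim. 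If instead $\alpha_j\ge\beta_k$, then both $X_{(\alpha_j,\beta_k]}$ and $X_{(\alpha_j,\beta_k-1]}$ are zero (empty sums), the first two entries coincide, and what remains is to show that the extra term vanishes, i.e.\ that $e_{m-1}[T_{\lambda_k-1}-T_{\mu_j}]=0$.

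To finish I would classify the columns $j$ with $\alpha_j\ge\beta_k$. Condition (2) excludes $j=k$, and condition (3) excludes every $j<k$ with $\mu_j<\lambda_k$; hence such a column has either $j<k$ and $\mu_j\ge\lambda_k$, or $j>k$. In the former case $m-1=(\lambda_k-\mu_j)+(j-k)-1\le-2<0$, so $e_{m-1}=0$ since its index is negative. In the latter case, since $\mu$ is a partition and $\mu_k<\lambda_k$ by condition (1), we get $\mu_j\le\mu_k<\lambda_k$, so $T_{\lambda_k-1}-T_{\mu_j}$ is a genuine sum of $\lambda_k-1-\mu_j$ variables; as $m-1>\lambda_k-1-\mu_j$ is equivalent to $j>k$, Lemma~\ref{lem:e_k=0} (equivalently \eqref{eq:ekA=0}) forces $e_{m-1}[T_{\lambda_k-1}-T_{\mu_j}]=0$. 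Thus the extra term always vanishes, the entrywise identity holds for all $j$, and the lemma follows by multilinearity.
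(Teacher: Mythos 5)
Your proposal is correct and follows essentially the same route as the paper's proof: reduce to the entrywise identity in row $k$ via multilinearity, apply Lemma~\ref{lem:e_k[Z]} in both cases, and in the case $k\notin C$ with $\alpha_j\ge\beta_k$ use conditions (2) and (3) to split into $j<k$ (where the index of $e$ is negative) and $j>k$ (where Lemma~\ref{lem:e_k=0} applies). The only cosmetic difference is that you make explicit the check that the three matrices agree outside row $k$, which the paper leaves implicit.
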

\begin{proof}
We consider the two cases $k\notin C$ and $k\in C$.

\textbf{Case 1:} $k\not\in C$. We claim that, for all $1\le j\le n$,
\begin{equation}\label{eq:e=e+xe}
  e_{\lambda,\mu}^{\alpha,\beta}(k,j)
  =e_{\lambda,\mu}^{\alpha,\beta-\epsilon_k}(k,j)
  +x_{\beta_k} \ove_{\lambda-\epsilon_k,\mu}^{\alpha,\beta}(k,j).
\end{equation}
By definition we can rewrite \eqref{eq:e=e+xe} as 
\begin{multline}\label{eq:7.4}
\quad  e_{\lambda_k-k-\mu_j+j}[X_{(\alpha_j,\beta_k]}+T_{\lambda_k-1}-T_{\mu_j}]
  = e_{\lambda_k-k-\mu_j+j}[X_{(\alpha_j,\beta_k-1]}+T_{\lambda_k-1}-T_{\mu_j}]\\
  + x_{\beta_k}
  e_{\lambda_k-k-\mu_j+j-1}[X_{(\alpha_j,\beta_k-1]}+T_{\lambda_k-1}-T_{\mu_j}].
\end{multline}
If $\alpha_j<\beta_k$, \eqref{eq:7.4} follows from Lemma~\ref{lem:e_k[Z]} with
$Z=X_{(\alpha_j,\beta_k]}+T_{\lambda_k-1}-T_{\mu_j}$ and $z=x_{\beta_k}$.

Suppose now that $\alpha_j\ge \beta_k$. Then $X_{(\alpha_j, \beta_k]} = 0$ and
$X_{(\alpha_j, \beta_k-1]} = 0$. Therefore in order to prove the
claim~\eqref{eq:7.4}, it suffices to show
\begin{equation}
  \label{eq:10}
 e_{\lambda_k-k-\mu_j+j-1}[T_{\lambda_k-1}-T_{\mu_j}] = 0, 
\end{equation}
because then both sides of \eqref{eq:7.4} are equal to
$e_{\lambda_k-k-\mu_j+j}[T_{\lambda_k-1}-T_{\mu_j}]$. We will prove
\eqref{eq:10} by considering the two cases $j\le k$ and $k<j$.

First we assume $j\le k$. Since $\alpha_j\ge \beta_k$ and $\alpha_k<\beta_k$, we
have $j\ne k$, thus $j<k$. Thus, by condition (3), if $\mu_j<\lambda_k$, then
$\alpha_j<\beta_k$, which contradicts the assumption $\alpha_j\ge \beta_k$. Thus
we must have $\lambda_k\le \mu_j$. Since $j\le k$ and $\lambda_k\le \mu_j$, we
have $\lambda_k-k-\mu_j+j-1 \le -1$, which shows \eqref{eq:10}.

Now we assume $k<j$. Then $\lambda_k>\mu_k\ge\mu_j$. Since
$\lambda_k-k-\mu_j+j-1>\lambda_k-\mu_j-1$ and $\lambda_{k}-1\ge \mu_j$, we
obtain \eqref{eq:10} by Lemma~\ref{lem:e_k=0}. This establishes the claim
\eqref{eq:e=e+xe}.

Using \eqref{eq:e=e+xe} and the linearity of the determinant in its $k$th row,
we obtain the identity in the lemma in this case.

\textbf{Case 2:} $k\in C$. We claim that, for all $1\le j\le n$,
\begin{equation}\label{eq:e=e+te}
  \ove_{\lambda,\mu}^{\alpha,\beta}(k,j)
  =e_{\lambda,\mu}^{\alpha,\beta-\epsilon_k}(k,j)
  +t_{\lambda_k} \ove_{\lambda-\epsilon_k,\mu}^{\alpha,\beta}(k,j).
\end{equation}
  By Lemma~\ref{lem:e_k[Z]} with
  $Z=X_{(\alpha_j,\beta_k-1]}+T_{\lambda_k}-T_{\mu_j}$ and $z=t_{\lambda_k}$,
  we have
  \begin{multline*}
    e_{\lambda_k-k-\mu_j+j}[X_{(\alpha_j,\beta_k-1]}+T_{\lambda_k}-T_{\mu_j}]
    = e_{\lambda_k-k-\mu_j+j}[X_{(\alpha_j,\beta_k-1]}+T_{\lambda_k-1}-T_{\mu_j}]
    \\ + t_{\lambda_k} 
    e_{\lambda_k-k-\mu_j+j-1}[X_{(\alpha_j,\beta_k-1]}+T_{\lambda_k-1}-T_{\mu_j}],
  \end{multline*}
  which is exactly \eqref{eq:e=e+te}. Using \eqref{eq:e=e+te} and the linearity
  of the determinant in its $k$th row, we obtain the identity in the lemma in
  this case, which completes the proof.
\end{proof}

\begin{lem}\label{lem:equal rows}
  Suppose that $\alpha, \beta,\mu,\lambda \in \NN^n$, $C \subseteq [n]$, and
  $r \in [n]$ satisfy $r \notin C$, $r-1 \in C$, $\lambda_r = \lambda_{r-1}+1$,
  and $\beta_r = \beta_{r-1}-1$. Then $E_{\lambda, \mu}^{\alpha, \beta}(C) = 0$.
\end{lem}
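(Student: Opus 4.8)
Let me understand what Lemma~\ref{lem:equal rows} claims. We have:
- $r \notin C$, so in row $r$ we use $e_{\lambda,\mu}^{\alpha,\beta}(r,j)$ (the non-barred version)
- $r-1 \in C$, so in row $r-1$ we use $\ove_{\lambda,\mu}^{\alpha,\beta}(r-1,j)$ (the barred version)
- $\lambda_r = \lambda_{r-1}+1$
- $\beta_r = \beta_{r-1}-1$

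Let me write out the entries.

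Row $r$ (since $r \notin C$):
$$e_{\lambda,\mu}^{\alpha,\beta}(r,j) = e_{\lambda_r - r - \mu_j + j}[X_{(\alpha_j, \beta_r]} + T_{\lambda_r - 1} - T_{\mu_j}]$$

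Row $r-1$ (since $r-1 \in C$):
$$\ove_{\lambda,\mu}^{\alpha,\beta}(r-1,j) = e_{\lambda_{r-1} - (r-1) - \mu_j + j}[X_{(\alpha_j, \beta_{r-1}-1]} + T_{\lambda_{r-1}} - T_{\mu_j}]$$

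Now let me substitute the given relations. We have $\lambda_{r-1} = \lambda_r - 1$ and $\beta_{r-1} = \beta_r + 1$, so $\beta_{r-1} - 1 = \beta_r$.

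So row $r-1$:
$$\ove_{\lambda,\mu}^{\alpha,\beta}(r-1,j) = e_{(\lambda_r - 1) - (r-1) - \mu_j + j}[X_{(\alpha_j, \beta_r]} + T_{\lambda_r - 1} - T_{\mu_j}]$$

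The degree: $(\lambda_r - 1) - (r-1) - \mu_j + j = \lambda_r - r - \mu_j + j$.

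And the plethystic argument: $X_{(\alpha_j, \beta_r]} + T_{\lambda_r - 1} - T_{\mu_j}$.

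**Key observation:** Rows $r$ and $r-1$ are identical!

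Row $r$: $e_{\lambda_r - r - \mu_j + j}[X_{(\alpha_j, \beta_r]} + T_{\lambda_r - 1} - T_{\mu_j}]$

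Row $r-1$: $e_{\lambda_r - r - \mu_j + j}[X_{(\alpha_j, \beta_r]} + T_{\lambda_r - 1} - T_{\mu_j}]$

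These are exactly the same for all $j$. So the matrix has two equal rows, hence the determinant is $0$.

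Let me verify this carefully.
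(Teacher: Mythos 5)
Your proof is correct and is essentially identical to the paper's: both compare rows $r-1$ and $r$, substitute $\lambda_{r-1}=\lambda_r-1$ and $\beta_{r-1}-1=\beta_r$ into the barred entry for row $r-1$, and conclude that the two rows coincide, forcing the determinant to vanish. No gaps.
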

\begin{proof}
We
  compare rows $r$ and $r-1$ of the matrix in the definition of
\[
  E_{\lambda, \mu}^{\alpha, \beta}(C) =
\det\left(e^{\alpha,\beta}_{\lambda,\mu} (C;i,j)\right)_{1\le i,j\le n}.
\] 
Since $r\notin C$, the $(r,j)$ entry of the matrix is 
\[
  e^{\alpha,\beta}_{\lambda,\mu}(C;r,j)
  =e^{\alpha,\beta}_{\lambda,\mu}(r,j) 
    =e_{\lambda_r-r-\mu_j+j}[X_{(\alpha_j,\beta_r]}+T_{\lambda_r-1}-T_{\mu_j}].
\]
Since $r-1\in C$, the $(r-1,j)$ entry of the matrix is 
\[
  e^{\alpha,\beta}_{\lambda,\mu}(C;r-1,j)
  =\ove^{\alpha,\beta}_{\lambda,\mu}(r-1,j) 
    =e_{\lambda_{r-1}-(r-1)-\mu_j+j}[X_{(\alpha_j,\beta_{r-1}-1]}+T_{\lambda_{r-1}}-T_{\mu_j}].
\]
Since $\lambda_r = \lambda_{r-1}+1$ and $\beta_r = \beta_{r-1}-1$, the right
hand sides of the above two equations are equal. Therefore rows $r-1$ and $r$ of
the matrix are identical, which implies $E_{\lambda, \mu}^{\alpha, \beta}(C) =
0$.
\end{proof}

\subsection{Proof of Theorem~\ref{thm:col_flag2}}

We first show that $\oR^{\alpha,\beta}_{\lambda,\mu}(R_0)$ and
$\EE^{\alpha,\beta}_{\lambda,\mu}(R_0)$ satisfy the same recurrence relation 
under certain conditions.

\begin{prop}\label{prop:rec RPP R0}
  Let $\alpha,\beta\in\QPar_n$ and $\lambda,\mu\in\Par_n$ with $\alpha<\beta$ and
  $\mu<\lambda$. Fix $(r,c)\in\lm$ and $R_0\in
  \RPP^{\row(\alpha,\beta)}_{\rho}$, where $\rho$ is the set of cells
  $(i,j)\in\lm$ with $(i,j)\prec(r,c)$. Let
  $\wb=(\wb_1,\dots,\wb_n)=B(R_0,\beta)$. Then
  \begin{align}
    \label{eq:rec M}
    \oR^{\alpha,\beta}_{\lambda,\mu}(R_0) 
    &= \sum_{k=a}^{\wb_r} \oR^{\alpha,\beta}_{\lambda,\mu}(R_0\cup\{k\}),\\
    \label{eq:rec E}
    \EE^{\alpha,\beta}_{\lambda,\mu}(R_0)
    &= \sum_{k=a}^{\wb_r} \EE^{\alpha,\beta}_{\lambda,\mu}(R_0\cup\{k\}),
  \end{align}
  where $R_0\cup\{k\}$ is the RPP obtained from $R_0$ by adding the cell $(r,c)$
  with entry $k$, and 
  \[
a = \begin{cases}
  \wb_{r-1}, &\mbox{if $(r-1,c)\in\rho$ and $\wb_{r-1}\ge\alpha_r+1$},\\
 \alpha_r+1, &\mbox{otherwise}.
\end{cases}
\]
\end{prop}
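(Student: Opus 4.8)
The plan is to prove the two displayed recurrences separately. Equation~\eqref{eq:rec M} is a direct combinatorial decomposition, while \eqref{eq:rec E} is the substantive identity and is obtained by iterating Lemma~\ref{lem:rec e}. For \eqref{eq:rec M}, I would partition $\RPP^{\row(\alpha,\beta)}_{\lm}(R_0)$ according to the value $k=R(r,c)$ placed in the next cell $(r,c)$. Since $\owt$ is insensitive to this bookkeeping, the weight sum splits as $\sum_k\oR^{\alpha,\beta}_{\lambda,\mu}(R_0\cup\{k\})$, and the only point is to pin down the admissible range of $k$. The entry immediately to the right, $R_0(r,c+1)$, equals $\wb_r$ when $(r,c+1)\in\lm$ (and otherwise the flag supplies the bound $\beta_r=\wb_r$), so the RPP condition forces $k\le\wb_r$; the flag forces $k\ge\alpha_r+1$, and the entry above, when present, forces $k\ge R_0(r-1,c)=\wb_{r-1}$. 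The lower bound is the larger of the latter two, which is exactly $a$. This yields \eqref{eq:rec M}.

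For \eqref{eq:rec E} I would first unwind Definition~\ref{defn:E}. Writing $\rho'=\rho\cup\{(r,c)\}$, one checks that $C(\rho')=C(\rho)\cup\{r\}$, that $\lambda-\rho'=(\lambda-\rho)-\epsilon_r$, and that $B(R_0\cup\{k\},\beta)$ agrees with $\wb=B(R_0,\beta)$ except in coordinate $r$, where $\wb_r$ is replaced by $k$; moreover $\owt(R_0\cup\{k\})/\owt(R_0)$ equals $x_k$ when $k$ is a new value in row $r$, and equals $t_{\nu_r}$ (where $\nu_r=(\lambda-\rho)_r=c$) when $k=\wb_r$ creates a horizontal repeat with $(r,c+1)$. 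After dividing by $\owt(R_0)$, \eqref{eq:rec E} becomes the determinant identity
\[
E^{\alpha,\wb}_{\lambda-\rho,\mu}(C(\rho)) = \sum_{k=a}^{\wb_r} w(k)\, E^{\alpha,\wb^{(k)}}_{(\lambda-\rho)-\epsilon_r,\mu}(C(\rho)\cup\{r\}),
\]
where $\wb^{(k)}$ has $r$th coordinate $k$ and $w(k)\in\{x_k,t_{\nu_r}\}$ as above. I prove this by repeatedly applying Lemma~\ref{lem:rec e} with special index $r$: each application peels off one value of the flag entry $\beta_r$ (from $\wb_r$ down to $a$) and produces precisely the summand with the matching factor $x_{\beta_r}$ (from the case $r\notin C$) or $t_{\nu_r}$ (from the single initial case $r\in C$), so the factors reproduce the weights $w(k)$.

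The first point needing care is the validity of the peeling: hypotheses (1)--(3) of Lemma~\ref{lem:rec e} must hold at every step, i.e.\ for the current $\beta_r\ge a$. Condition (1) holds because $(r,c)\in\lm$ gives $\mu_r<c=\nu_r$; condition (2) holds because $\beta_r\ge a>\alpha_r$. Condition (3), the delicate one, requires $\alpha_j<\beta_r$ for all $j<r$ with $\mu_j<c$. Here I would invoke the flag inequality $R_0(j,c)\ge\alpha_j+1$ together with the column-monotonicity $R_0(j,c)\le R_0(r-1,c)$ of the RPP $R_0$ along column $c$, giving $\alpha_j<R_0(j,c)\le\wb_{r-1}\le a\le\beta_r$ (the case $\mu_{r-1}\ge c$ being vacuous since then $\mu_j\ge c$ for all $j<r$). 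This is exactly where the definition of $a$ and the hypothesis $\alpha\in\QPar_n$ enter.

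The main obstacle is the final step: once $\beta_r$ has been peeled down to $a-1$, one is left with a determinant of the form $E^{\alpha,\cdot}_{\lambda-\rho,\mu}(\cdot)$ in which the $r$th flag entry equals $a-1$ and $r$ lies outside the index set, and this leftover must vanish. I expect it to split along the definition of $a$. When $a=\wb_{r-1}$, rows $r-1$ and $r$ of the matrix become identical: there $\beta_r=\wb_{r-1}-1=\beta_{r-1}-1$ and $\nu_r=\nu_{r-1}+1$, so the leftover is $0$ by Lemma~\ref{lem:equal rows}. When $a=\alpha_r+1$, so the $r$th flag entry is $\alpha_r$, the degree bound \eqref{eq:ekA=0}, combined with $\alpha\in\QPar_n$ and the bound $\wb_i\le\alpha_r+(r-i)$ (again from column-monotonicity of $R_0$ down column $c$), should force the entire upper-right block (rows $\le r$, columns $\ge r$) to vanish, so that Lemma~\ref{lem:det=0} applies. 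Making this block argument uniform over all rows $i<r$---in particular for rows with $\mu_i\ge c$, where the leftmost filled cell no longer sits in column $c$ and the monotonicity bound must be handled separately---is the part I anticipate will require the most careful bookkeeping, and is where I would concentrate the effort.
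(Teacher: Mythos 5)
Your overall strategy coincides with the paper's: \eqref{eq:rec M} by splitting over the value placed in the cell $(r,c)$, and \eqref{eq:rec E} by repeatedly applying Lemma~\ref{lem:rec e} to peel the $r$th flag entry from $\wb_r$ down to $a$, with hypotheses (1)--(3) of that lemma verified via column-monotonicity of $R_0$ along column $c$ exactly as in the paper's Lemma~\ref{lem:s-1}, and with the leftover determinant $E^{\alpha,\wb-s\epsilon_r}_{\lambda-\rho,\mu}(C\setminus\{r\})$ killed by Lemma~\ref{lem:equal rows} when $a=\wb_{r-1}$. All of that is sound, the weight bookkeeping ($x_k$ versus $t_c$) is correct, and your scheme degenerates correctly to the case where no peeling occurs.

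The gap is the final vanishing step in the case $a=\alpha_r+1$. You propose to show that every entry of the leftover matrix in rows $i\le r$ and columns $j\ge r$ vanishes and then invoke Lemma~\ref{lem:det=0}, flagging the rows $i$ with $\mu_i\ge c$ as requiring ``careful bookkeeping''. But those entries do \emph{not} vanish, so no bookkeeping will rescue this form of the argument. For such a row $i$ (i.e.\ $i<d$, where $(d,c)$ is the topmost cell of column $c$ in $\lm$), the entire row $i$ of $\lm$ lies in $\rho$, so $(\lambda-\rho)_i=\mu_i$ and the $(i,j)$ entry is $e_{\mu_i-i-\mu_j+j}$ evaluated at $X_{(\alpha_j,\wb_i-1]}+T_{\mu_i}-T_{\mu_j}$; since $\wb_i$ is an entry of $R_0$ constrained only by $\beta_i$, this is generically nonzero. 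Concretely, with $\lambda=(3,2)$, $\mu=(2,0)$, $(r,c)=(2,2)$, $\alpha=(0,0)$, $\beta=(5,5)$, and $R_0(1,3)=4$, the $(1,2)$ entry of the leftover matrix is $e_3(x_1,x_2,x_3,t_1,t_2)\ne 0$, so Lemma~\ref{lem:det=0} cannot be applied with $k=r$. The missing idea (the paper's Lemma~\ref{lem:s}, Case 2) is to first factor the determinant at the index $d$ via Lemma~\ref{lem:det=detdet} --- the lower-left block (rows $\ge d$, columns $<d$) vanishes for the cheap reason that $(\lambda-\rho)_i\le c\le\mu_j$ there --- and only then apply Lemma~\ref{lem:det=0} to the lower-right factor, where the degree count you describe does go through for all $d\le i\le r$ and $j\ge r$. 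Without this preliminary block factorization the last step of your proof is not merely incomplete but aimed at a false intermediate claim.
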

\begin{proof}
  Observe that in order to construct an RPP $R$ in
  $\RPP^{\row(\alpha,\beta)}_\lm$ such that $R|_\rho=R_0$ we must fill the
  $(r,c)$ cell of $R$ with one of the integers $a,a+1,\dots,\wb_r$. Therefore
  the first identity \eqref{eq:rec M} is immediate from the definition of
  $\oR^{\alpha,\beta}_{\lambda,\mu}(R_0)$.

  It remains to prove the second identity \eqref{eq:rec E}.
  From the equality \eqref{eq:8a} below, we have $\alpha_r < \wb_r$.

  Let $C=C(\rho)=\{1\le i\le n:
  \rho_i>0\}$ and
    \begin{equation}\label{eq:s}
s = \begin{cases}
  \wb_r-\wb_{r-1}+1, & \mbox{if $(r-1,c)\in\rho$
  and $\wb_{r-1}\ge\alpha_r+1$} ,\\
\wb_r-\alpha_r, &\mbox{otherwise} . 
\end{cases}
  \end{equation}
  Lemma~\ref{lem:s-1} below (specifically, \eqref{eq:s>0})
  shows that $s\ge0$. Since $s = \wb_r - a + 1$, this
  shows that $s$ is the number of integers $k$ satisfying $a\le k\le \wb_r$. We
  will consider the two cases $s=0$ and $s\ge1$.

  We first consider the case $s=0$. In this case, $\wb_r = a-1$,
  so that the right hand side of
  \eqref{eq:rec E} is zero. Recall from \eqref{eq:defEE} that the left
  hand side of \eqref{eq:rec E} is
  \[
    \EE_{\lambda,\mu}^{\alpha,\beta}(R_0) = \owt(R_0)
    E_{\lambda-\rho,\mu}^{\alpha,B(R_0,\beta)}(C(\rho)) = \owt(R_0)
    E_{\lambda-\rho,\mu}^{\alpha,\wb}(C),
\]
since $\wb=B(R_0,\beta)$ and $C = C(\rho)$.
Hence, to prove \eqref{eq:rec E}, it suffices to show that
  \begin{equation}
    \label{eq:14}
  E_{\lambda-\rho,\mu}^{\alpha,\wb}(C) = 0.
  \end{equation}
  From \eqref{eq:s=0-cond} below, we get $r \notin C(\rho) = C$,
  whence $C=C\setminus\{r\}$.
Therefore \eqref{eq:14} follows from Lemma~\ref{lem:s}.
This shows \eqref{eq:rec E} for the case $s=0$.

It remains to prove \eqref{eq:rec E} for the case $s\ge1$. Let
\[
y =
\begin{cases}
  x_{\wb_r}, &\mbox{if $r\not\in C$,}\\
  t_{\lambda_r-\rho_r}, &\mbox{if $r\in C$.}
\end{cases}
\]
Since $(r,c)\in(\lm)-\rho$, we have $\mu_r<(\lambda-\rho)_r$.
We also know that $\alpha_r<\wb_r$. By \eqref{eq:11} below,
if $j<r$ and
$\mu_j<(\lambda-\rho)_r$, then $\alpha_j<\wb_r-(s-1) \le \wb_r$
(since $s \ge 1$). Therefore by
Lemma~\ref{lem:rec e} we have
\begin{equation}\label{eq:e=ye+e}
   E^{\alpha,\wb}_{\lambda-\rho,\mu}(C) 
   = y E^{\alpha,\wb}_{\lambda-\rho-\epsilon_r,\mu}(C\cup\{r\})
       +E_{\lambda-\rho,\mu}^{\alpha,\wb-\epsilon_r}(C\setminus\{r\}).
\end{equation}

For $1\le i\le s-1$, by \eqref{eq:8} below, we have $\alpha_r < \wb_r-(s-1) \le
\wb_r-i =(\wb-i\epsilon_r)_r$
(whence $\wb-i\epsilon_r \in \NN^n$),
and furtheremore, by \eqref{eq:11} below, we
have
$\alpha_j<\wb_r-(s-1)\le (\wb-i\epsilon_r)_r$ for each $j<r$
satisfying
$\mu_j<(\lambda-\rho)_r$. Therefore we can apply Lemma~\ref{lem:rec e}
repeatedly to $E^{\alpha,\wb-i\epsilon_r}_{\lambda-\rho,\mu}(C\setminus\{r\})$,
for $i=1,2,\dots,s-1$, to get
  \begin{align*}
   E_{\lambda-\rho,\mu}^{\alpha,\wb-\epsilon_r}(C\setminus\{r\})
   &= E_{\lambda-\rho,\mu}^{\alpha,\wb-2\epsilon_r}(C\setminus\{r\})
       +x_{\wb_r-1} E_{\lambda-\rho-\epsilon_r,\mu}^{\alpha,\wb-\epsilon_r}(C\cup\{r\}),\\
   E_{\lambda-\rho,\mu}^{\alpha,\wb-2\epsilon_r}(C\setminus\{r\})
   &= E_{\lambda-\rho,\mu}^{\alpha,\wb-3\epsilon_r}(C\setminus\{r\})
     +x_{\wb_r-2} E_{\lambda-\rho-\epsilon_r,\mu}^{\alpha,\wb-2\epsilon_r}(C\cup\{r\}),\\
    \vdots\\
   E_{\lambda-\rho,\mu}^{\alpha,\wb-(s-1)\epsilon_r}(C\setminus\{r\})
   &= E_{\lambda-\rho,\mu}^{\alpha,\wb-s\epsilon_r}(C\setminus\{r\})
     +x_{\wb_r-(s-1)} E_{\lambda-\rho-\epsilon_r,\mu}^{\alpha,\wb-(s-1)\epsilon_r}(C\cup\{r\}).
  \end{align*}
  Since $s\ge1$, combining \eqref{eq:e=ye+e} and the above equations yields
\begin{align}\label{eq:E=yE}
E^{\alpha,\wb}_{\lambda-\rho,\mu}(C) 
   &= y E^{\alpha,\wb}_{\lambda-\rho-\epsilon_r,\mu}(C\cup\{r\}) 
  +E_{\lambda-\rho,\mu}^{\alpha,\wb-s\epsilon_r}(C\setminus\{r\}) \\
  \notag &\qquad + \sum_{i=1}^{s-1}
       x_{\wb_r-i} E_{\lambda-\rho-\epsilon_r,\mu}^{\alpha,\wb-i\epsilon_r}(C\cup\{r\}).
  \end{align}
  The second summand of the right hand side of \eqref{eq:E=yE} vanishes:
  \begin{equation}\label{eq:ECr=0}
E_{\lambda-\rho,\mu}^{\alpha,\wb-s\epsilon_r}(C\setminus\{r\}) = 0,
  \end{equation}
  because of Lemma~\ref{lem:s} below.

In view of $C=C(\rho)$ and $\wb=B(R_0,\beta)$,
we can rewrite \eqref{eq:defEE} as
  \[
  \EE_{\lambda,\mu}^{\alpha,\beta}(R_0) = \owt(R_0) 
  E_{\lambda-\rho,\mu}^{\alpha,\wb}(C).
\]
Thus, multiplying both sides of \eqref{eq:E=yE} by
$\owt(R_0)$ and using \eqref{eq:ECr=0} we obtain
\[
  \EE^{\alpha,\beta}_{\lambda,\mu}(R_0) = y\owt(R_0)
  E^{\alpha,\wb}_{\lambda-\rho-\epsilon_r,\mu}(C\cup\{r\}) + \sum_{i=1}^{s-1}
  x_{\wb_r-i} \owt(R_0) E_{\lambda-\rho-\epsilon_r,\mu}^{\alpha,\wb-i\epsilon_r}(C\cup\{r\}).
  \]
  One can easily check that $y\owt(R_0) = \owt(R_0\cup\{\wb_r\})$ and
  $x_{\wb_r-i} \owt(R_0)=\owt(R_0\cup\{\wb_r-i\})$ for $1\le i\le s-1$. Hence we
  can rewrite the above equation as
\[
  \EE^{\alpha,\beta}_{\lambda,\mu}(R_0) =\sum_{i=0}^{s-1}
  \owt(R_0\cup\{\wb_r-i\})
  E_{\lambda-\rho-\epsilon_r,\mu}^{\alpha,\wb-i\epsilon_r}(C\cup\{r\})
  = \sum_{i=0}^{s-1} \EE^{\alpha,\beta}_{\lambda,\mu}(R_0\cup\{\wb_r-i\}),
\]
where the last equality follows from $\wb-i\epsilon_r =
B(R_0\cup\{\wb_r-i\},\beta)$.
Since $s = \wb_r - a + 1$, this is equivalent to \eqref{eq:rec E} and the
proof is completed.
\end{proof}

The following lemma proves two statements used in the proof of
Proposition~\ref{prop:rec RPP R0}.

\begin{lem}\label{lem:s-1}
  Following the notation in Proposition~\ref{prop:rec RPP R0} 
  and letting
        \[
s = \begin{cases}
  \wb_r-\wb_{r-1}+1, & \mbox{if $(r-1,c)\in\rho$
  and $\wb_{r-1}\ge\alpha_r+1$} ,\\
\wb_r-\alpha_r, &\mbox{otherwise},
\end{cases}
  \]
 we have
 \begin{equation}
   \label{eq:8a}
   \alpha_r < \wb_r,
 \end{equation}
  \begin{equation}
    \label{eq:s>0}
   s \ge 0,
 \end{equation}
 \begin{equation}
   \label{eq:s=0-cond}
 \mbox{if $s = 0$, then }  r \notin C(\rho),
 \end{equation}
  \begin{equation}
    \label{eq:s>1}
\mbox{if $\beta_{r-1} \leq \beta_r$, then }   s \ge 1,
 \end{equation}
  \begin{equation}
    \label{eq:8}
     \alpha_r <\wb_r-(s-1), \mbox{ and}
  \end{equation}
  \begin{equation}
    \label{eq:11}
 \mbox{if $j<r$ and $\mu_j<(\lambda-\rho)_r$, then }   \alpha_j<\wb_r-(s-1).
  \end{equation}
\end{lem}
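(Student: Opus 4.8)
The plan is to first translate the combinatorial data into explicit formulas and then read off the six inequalities. Because $\rho$ consists precisely of the cells of $\lm$ preceding $(r,c)$ in the order $\prec$, row $r$ of $\rho$ is $\{(r,j): c<j\le\lambda_r\}$, so $\rho_r=\lambda_r-c$ and $(\lambda-\rho)_r=c$; consequently $r\in C(\rho)$ iff $c<\lambda_r$, and
\[
\wb_r=\begin{cases} R_0(r,c+1), & \text{if } c<\lambda_r,\\ \beta_r, & \text{if } c=\lambda_r.\end{cases}
\]
Similarly $(r-1,c)\prec(r,c)$ always holds, so $(r-1,c)\in\rho$ iff $\mu_{r-1}<c$, and in that case $\wb_{r-1}=R_0(r-1,c)$. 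The only tools I will need beyond these identities are the row-flag bound $R_0(i,j)\ge\alpha_i+1$ and the weak monotonicity of $R_0$ along rows and columns.

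With these formulas in hand, I would dispatch the easy inequalities first. For \eqref{eq:8a}, either $c<\lambda_r$ and $\wb_r=R_0(r,c+1)\ge\alpha_r+1>\alpha_r$, or $c=\lambda_r$ and $\wb_r=\beta_r>\alpha_r$ by $\alpha<\beta$. Since $s=\wb_r-a+1$, we have $\wb_r-(s-1)=a$, so \eqref{eq:8} is exactly the statement $\alpha_r<a$, which is immediate from the branch definition of $a$ (namely $a=\wb_{r-1}\ge\alpha_r+1$ in the first branch, and $a=\alpha_r+1$ otherwise).

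For the inequalities on $s$ I will split on the branch defining $s$. In the ``otherwise'' branch $s=\wb_r-\alpha_r\ge1$ by \eqref{eq:8a}. In the first branch $a=\wb_{r-1}=R_0(r-1,c)$, and I split on whether $c<\lambda_r$. When $c<\lambda_r$, the cells $(r-1,c+1)$ and $(r,c+1)$ both lie in $\rho$, and row/column monotonicity give $\wb_{r-1}=R_0(r-1,c)\le R_0(r-1,c+1)\le R_0(r,c+1)=\wb_r$, so $s\ge1$. When $c=\lambda_r$, the row bound together with $\beta\in\QPar_n$ give $\wb_{r-1}=R_0(r-1,\lambda_r)\le\beta_{r-1}\le\beta_r+1=\wb_r+1$, so $s\ge0$, with $s\ge1$ as soon as $\beta_{r-1}\le\beta_r$. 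This yields \eqref{eq:s>0} in all cases and \eqref{eq:s>1}; moreover $s=0$ can occur only in the first branch with $c=\lambda_r$, and then $\rho_r=0$ forces $r\notin C(\rho)$, giving \eqref{eq:s=0-cond}.

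The step I expect to be the crux is \eqref{eq:11}, where I must bound $\alpha_j$ for $j<r$ with $\mu_j<(\lambda-\rho)_r=c$. The idea is a column chain: since $\mu$ is a partition, $\mu_i\le\mu_j<c\le\lambda_r\le\lambda_i$ for all $j\le i\le r-1$, so each cell $(i,c)$ lies in $\lm$ and precedes $(r,c)$, hence lies in $\rho$; column monotonicity then gives $R_0(j,c)\le\cdots\le R_0(r-1,c)$, while the row bound gives $\alpha_j<R_0(j,c)$. In the first branch $a=R_0(r-1,c)$, so $\alpha_j<R_0(j,c)\le R_0(r-1,c)=a$. In the ``otherwise'' branch, either $(r-1,c)\notin\rho$, whence $\mu_{r-1}\ge c$ and the hypothesis $\mu_j<c$ is vacuous, or $(r-1,c)\in\rho$ with $\wb_{r-1}=R_0(r-1,c)\le\alpha_r$, and the same chain gives $\alpha_j<R_0(r-1,c)\le\alpha_r<\alpha_r+1=a$. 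This finishes \eqref{eq:11} and the lemma.
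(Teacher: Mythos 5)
Your proposal is correct and follows essentially the same route as the paper's proof: the same identification $\wb_r-(s-1)=a$, the same case split on the two branches of $s$ (with your $c<\lambda_r$ versus $c=\lambda_r$ dichotomy being exactly the paper's $r\in C(\rho)$ versus $r\notin C(\rho)$), and the same column-chain argument through the cells $(j,c),\dots,(r-1,c)\in\rho$ for \eqref{eq:11}. The only cosmetic difference is that you verify \eqref{eq:11} in the ``otherwise'' branch by a direct computation with $\wb_{r-1}\le\alpha_r$, where the paper routes both subcases through its displayed formula for $\wb_r-(s-1)$.
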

\begin{proof}
  The first statement \eqref{eq:8a} is easiest to prove:
  If $r\notin C(\rho)$, then $\wb_r=\beta_r\ge\alpha_r+1$
  (since $\alpha < \beta$);
  if $r\in C(\rho)$, then $\wb_r=R_0(r,\lambda_r-\rho_r+1)\ge\alpha_r+1$
  (since $R_0 \in \RPP^{\row(\alpha,\beta)}_{\rho}$).
  In either case we have $\wb_r\ge \alpha_r+1$,
  and thus \eqref{eq:8a} holds.

  We will prove the next two statements \eqref{eq:s>0} and \eqref{eq:s=0-cond}
  by considering the two cases in the definition of $s$.

  \textbf{Case 1:}
  $(r-1,c)\in\rho$ and $\wb_{r-1}\ge\alpha_r+1$. Then
  $s=\wb_r-\wb_{r-1}+1$. Note that $\beta_{r-1} \le \beta_r+1$
  since $\beta \in \QPar_n$.
  Observe that $r-1\in C(\rho)$ and $(r-1,c)$ is the
  leftmost cell in the $(r-1)$st row of $\rho$. Thus $\wb_{r-1}=R_0(r-1,c)$. If
  $r\notin C(\rho)$, we have
  \begin{equation}
    \label{eq:17}
  \wb_{r-1}=R_0(r-1,c) \le \beta_{r-1} \le \beta_r+1 = \wb_r+1
  \end{equation}
  by the definition of $\wb_r$, and thus
  $s=\wb_r-\wb_{r-1}+1\ge0$.
  If $r\in C(\rho)$, then $(r,c+1)$ is the leftmost cell of the $r$th row of
  $\rho$ and therefore
  \begin{equation}
    \label{eq:17b}
  \wb_{r-1}=R_0(r-1,c)  \le R_0(r,c+1) = \wb_r
  \end{equation}
  and thus $s=\wb_r-\wb_{r-1}+1\ge1\ge 0$.
Thus we always have $s\ge 0$.
This means that \eqref{eq:s>0} holds.

If $r \in C(\rho)$, then $s=\wb_r-\wb_{r-1}+1 \ge 1$ by \eqref{eq:17b},
and thus $s \neq 0$. Taking the contrapositive yields \eqref{eq:s=0-cond}.
So we have proved both \eqref{eq:s>0} and \eqref{eq:s=0-cond} in this case.

\textbf{Case 2:}
$(r-1,c)\notin\rho$ or $\wb_{r-1}<\alpha_r+1$.
Then, $s=\wb_r-\alpha_r$. From \eqref{eq:8a},
we have $\wb_r \ge \alpha_r + 1$.
Therefore $s=\wb_r-\alpha_r \ge 1$ (so $s=0$ cannot happen in this case),
which completes the proof of the
two statements \eqref{eq:s>0} and \eqref{eq:s=0-cond}.

The fourth statement \eqref{eq:s>1} is proved just as we proved
\eqref{eq:s>0}, except that $\beta_{r-1} \le \beta_r+1 = \wb_r+1$
is replaced by $\beta_{r-1} \leq \beta_r = \wb_r$ in \eqref{eq:17}.

Now we prove the fifth statement \eqref{eq:8}. By definition of $s$ we have
\begin{equation}
  \label{eq:13}
\wb_r-(s-1) = \begin{cases}
  \wb_{r-1}, & \mbox{if $(r-1,c)\in\rho$
  and $\wb_{r-1}\ge\alpha_r+1$} ,\\
\alpha_r+1, &\mbox{otherwise},
\end{cases}
\end{equation}
which implies $\wb_r-(s-1)\ge\alpha_r+1$. This shows the fifth statement
\eqref{eq:8}.
  
  For the last statement \eqref{eq:11} suppose that $j<r$ and
  $\mu_j<(\lambda-\rho)_r$. We have $\mu_j<(\lambda-\rho)_r=c$ by
  the definition of $\rho$, thus $(j,c)\notin\mu$. In view of
  $j<r$, this leads to $(j,c) \in \lm$ and consequently
  $(j,c)\in \rho$. Hence, $(r-1,c)\in \rho$ (since $j \le r-1 < r$),
  and thus (as in Case 1 above) $\wb_{r-1}=R_0(r-1,c)$.
  Now
\[
\alpha_j < \alpha_j+1 \le R_0(j,c) \le R_0(r-1,c) = \wb_{r-1}\le \wb_r-(s-1),
\]
where the last inequality follows from \eqref{eq:13} because $(r-1,c)\in\rho$.
This shows \eqref{eq:11}.
\end{proof}

Now we prove the identity \eqref{eq:ECr=0} used in the proof of
Proposition~\ref{prop:rec RPP R0}.

\begin{lem}\label{lem:s}
  Following the notation in Proposition~\ref{prop:rec RPP R0} we have
  \begin{equation}\label{eq:e0}
E_{\lambda-\rho,\mu}^{\alpha,\wb-s\epsilon_r}(C\setminus\{r\}) = 0,
  \end{equation}
  where $C=C(\rho)$ and 
      \[
s = \begin{cases}
  \wb_r-\wb_{r-1}+1, & \mbox{if $(r-1,c)\in\rho$
  and $\wb_{r-1}\ge\alpha_r+1$} ,\\
\wb_r-\alpha_r, &\mbox{otherwise} . 
\end{cases}
  \]
\end{lem}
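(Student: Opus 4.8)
The plan is to prove that the determinant $E_{\lambda-\rho,\mu}^{\alpha,\wb-s\epsilon_r}(C\setminus\{r\})$ vanishes by locating a zero block and applying Lemma~\ref{lem:det=0} or Lemma~\ref{lem:det=detdet}. Write $M$ for the underlying $n\times n$ matrix and note $r\notin C\setminus\{r\}$. I first record the shape data. Since $(r,c)\in\lm$ we have $(\lambda-\rho)_r=c>\mu_r$ and $(\lambda-\rho)_i=\min(c,\lambda_i)\le c$ for $i\ge r$; moreover $\lambda_i\ge\lambda_r\ge c$ forces $\rho_i>0$ for every $i<r$, so $\{1,\dots,r-1\}\subseteq C$, the rows $1,\dots,r-1$ of $M$ are of $\ove$-type, and $(\lambda-\rho)_i=\max(\mu_i,c-1)$ there. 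Using \eqref{eq:13} I split according to whether $\wb_r-(s-1)$ equals $\wb_{r-1}$ or $\alpha_r+1$.

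If $\wb_r-(s-1)=\wb_{r-1}$ (so $(r-1,c)\in\rho$ and $\wb_{r-1}\ge\alpha_r+1$), then $(\wb-s\epsilon_r)_r=\wb_{r-1}-1$. As $(r-1,c)$ is the leftmost filled cell of row $r-1$, we get $(\lambda-\rho)_{r-1}=c-1$, hence $(\lambda-\rho)_r=(\lambda-\rho)_{r-1}+1$; together with $r-1\in C\setminus\{r\}$, $r\notin C\setminus\{r\}$, and $(\wb-s\epsilon_r)_r=\wb_{r-1}-1=(\wb-s\epsilon_r)_{r-1}-1$, this is exactly the hypothesis of Lemma~\ref{lem:equal rows}, which gives the claim. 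By \eqref{eq:s=0-cond} the value $s=0$ can occur only in this branch, so this also settles $s=0$.

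In the remaining case $(\wb-s\epsilon_r)_r=\alpha_r$. Let $p$ be the largest $i<r$ with $\mu_i\ge c$ (and $p=0$ if there is none); by monotonicity of $\mu$ these are the rows $1,\dots,p$, where $(\lambda-\rho)_i=\mu_i$, while $(\lambda-\rho)_i=c-1$ for $p<i<r$. First I would show that $M$ vanishes at every entry with $p+1\le i\le n$ and $1\le j\le p$: there $\mu_j\ge c$ while $(\lambda-\rho)_i\le c-1$ for $i<r$ and $(\lambda-\rho)_i\le c$ for $i\ge r$, so the subscript $(\lambda-\rho)_i-i-\mu_j+j$ is negative and the entry is $0$. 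By Lemma~\ref{lem:det=detdet} the determinant factors, and it remains to kill the principal submatrix $M'$ on rows and columns $p+1,\dots,n$. For this I would prove that the block of $M'$ in rows $p+1,\dots,r$ and columns $r,\dots,n$ is zero and conclude by Lemma~\ref{lem:det=0}. Row $r$ vanishes in every column $j\ge r$ because $(\wb-s\epsilon_r)_r=\alpha_r$ makes $X_{(\alpha_j,\alpha_r]}$ a sum of at most $j-r$ variables (using $\alpha\in\QPar_n$), while $T_{c-1}-T_{\mu_j}$ contributes only $c-1-\mu_j$, one fewer than the subscript, so \eqref{eq:ekA=0} applies. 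For $p<i<r$ the entry is of $\ove$-type with $(\lambda-\rho)_i=c-1\ge\mu_j$, and the key input is $\wb_i\le\alpha_r$: since $\mu_i<c$ the leftmost filled cell of row $i$ lies in column $c$, so column-monotonicity of $R_0$ gives $\wb_i=R_0(i,c)\le R_0(r-1,c)=\wb_{r-1}$, and $\wb_{r-1}\le\alpha_r$ because this branch with $(r-1,c)\in\rho$ is precisely the ``otherwise'' alternative (which is forced once $p<r-1$). With $\alpha\in\QPar_n$ this yields $\wb_i-\alpha_j\le j-i$, which again forces the entry to vanish via \eqref{eq:ekA=0}. If instead $p=r-1$ (i.e.\ $(r-1,c)\notin\rho$), then $M'$ has first row equal to row $r$, already zero, and the bound on $\wb_i$ is not needed.

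The main obstacle is this last case. The identity $(\wb-s\epsilon_r)_r=\alpha_r$ together with $\mu_r<(\lambda-\rho)_r$ matches the hypothesis of Lemma~\ref{lem:E=0}, but neither $\lambda-\rho\in\Par_n$ nor $\wb-s\epsilon_r\in\QPar_n$ need hold, so that lemma cannot be quoted directly; its degree argument must be reproduced by hand, and only after peeling off the top $p$ rows. The genuinely delicate inequality is $\wb_i\le\alpha_r$ for $p<i<r$, which does not follow from the flags $\alpha,\beta$ (indeed $\beta_r>\alpha_r$) but from the actual entries of $R_0$ via column-monotonicity down column $c$ and the defining inequality $\wb_{r-1}\le\alpha_r$ of the present case. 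Keeping track of the two overlapping sub-blocks and checking the boundary entries where $(\lambda-\rho)_i=\mu_j$ is where the care lies; the rest is routine degree counting through \eqref{eq:ekA=0}.
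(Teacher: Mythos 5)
Your proof is correct and follows essentially the same route as the paper: the same case split according to the two branches in the definition of $s$, an application of Lemma~\ref{lem:equal rows} in the first branch, and in the second branch a block factorization at the top of column $c$ followed by variable-counting via \eqref{eq:ekA=0} on the rows $i\le r$ and columns $j\ge r$ (your index $p$ is the paper's $d-1$), including the same key inequality $\wb_i\le\wb_{r-1}\le\alpha_r$ read off from $R_0$ down column $c$. The only differences are presentational.
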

\begin{proof}
  Clearly, $\lambda - \rho \in \NN^n$.
  Also, \eqref{eq:8} shows that $\wb_r - s \ge \alpha_r \ge 0$,
  whence $\wb - s \epsilon_r \in \NN^n$.
  We consider the two cases in the definition of $s$ separately.

  \textbf{Case 1:} $(r-1,c)\in\rho$ and $\wb_{r-1}\ge\alpha_r+1$. Then
  $s=\wb_r-\wb_{r-1}+1$. In order to prove \eqref{eq:e0} it suffices to check
  that $\alpha,\wb-s\epsilon_r, \lambda-\rho,\mu, C\setminus\{r\}$, and $r$
  satisfy the conditions for $\alpha,\beta,\lambda,\mu,C$, and $r$ in
  Lemma~\ref{lem:equal rows}. Since $(r-1,c)\in\rho$ and $\rho$ is the set of
  all cells $(i,j)\prec(r,c)$ we have $(\lambda-\rho)_r =
  (\lambda-\rho)_{r-1}+1$. The fact $(r-1,c)\in\rho$ also implies $r-1\in
  C\setminus\{r\}$, and clearly $r\notin C\setminus\{r\}$. Since
  $s=\wb_r-\wb_{r-1}+1$, we have $(\wb-s\epsilon_r)_r =
  (\wb-s\epsilon_r)_{r-1}-1$. Therefore the conditions in Lemma~\ref{lem:equal
    rows} hold, and \eqref{eq:e0} is proved in this case.

  \textbf{Case 2:} $(r-1,c)\notin\rho$ or $\wb_{r-1}<\alpha_r+1$. Then
  $s=\wb_r-\alpha_r$. Let $d$ be the integer such that $(d,c)\in\lm$ and
  $(d-1,c)\notin\lm$. In other words, $(d,c)$ is the topmost cell in the $c$th
  column of $\lm$, see Figure~\ref{fig:d}. Then $1\le d\le r$. Let
  $\kappa=\wb-s\epsilon_r$ and $\sigma =\lambda-\rho$, so that
\[
  E_{\lambda-\rho,\mu}^{\alpha,\wb-s\epsilon_r}(C\setminus\{r\})
  =  E_{\sigma,\mu}^{\alpha,\kappa}(C\setminus\{r\})
=  \det(e_{\sigma,\mu}^{\alpha,\kappa}(C\setminus\{r\};i,j))_{1\le i,j\le n}.
\]

\begin{figure}
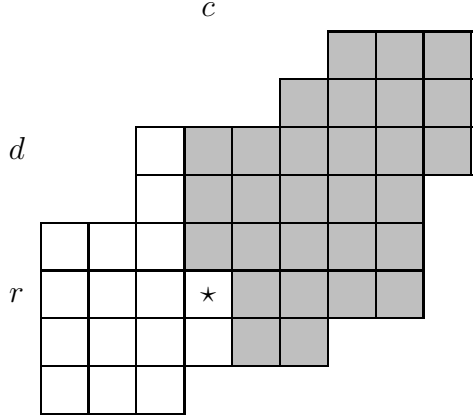

  \centering
\begin{ytableau}
\none & \none & \none &\none& \none[c]  &\none  & \none  & \none & \none & \none\\
\none & \none & \none &\none&\none  &\none  & \none  & *(lightgray) & *(lightgray) & *(lightgray)\\
\none & \none & \none &\none&\none  &\none  & *(lightgray)  & *(lightgray) & *(lightgray) & *(lightgray)\\
\none[d] & \none & \none & & *(lightgray) & *(lightgray) & *(lightgray)  & *(lightgray) & *(lightgray) & *(lightgray)\\
\none & \none & \none & & *(lightgray) & *(lightgray) & *(lightgray)  & *(lightgray) & *(lightgray) \\
\none &  &  & & *(lightgray) & *(lightgray) & *(lightgray)  & *(lightgray) & *(lightgray) \\
\none[r] &  &  & & \star & *(lightgray) & *(lightgray)  & *(lightgray) & *(lightgray) \\
\none &  &  & &  & *(lightgray) & *(lightgray)    \\
\none &  &  & 
\end{ytableau}
\caption{An example of $\lm$ and $\rho$, where the cells in $\rho$ are the gray
  cells. The $(r,c)$ cell is marked with a star and the row indices $d$ and $r$,
  and the column index $c$ are shown.}
  \label{fig:d}
\end{figure}

 We claim that 
\begin{equation}\label{eq:E=dd}
  E_{\sigma,\mu}^{\alpha,\kappa}(C\setminus\{r\})
  =  \det(e_{\sigma,\mu}^{\alpha,\kappa}(C\setminus\{r\};i,j))_{1\le i,j\le d-1}
  \det(e_{\sigma,\mu}^{\alpha,\kappa}(C\setminus\{r\};i,j))_{d\le i,j\le n}.
\end{equation}
To show \eqref{eq:E=dd}, by Lemma~\ref{lem:det=detdet} it suffices to show that
for all $d\le i\le n$ and $1\le j\le d-1$, we have
\begin{equation}
  \label{eq:12}
 e_{\sigma,\mu}^{\alpha,\kappa}(C\setminus\{r\};i,j) = 0.
\end{equation}
By the definitions of $\rho$ and $d$ (see Figure~\ref{fig:d}), if $d\le i\le n$
and $1\le j\le d-1$, we have
\[
\sigma_i=(\lambda-\rho)_i \le c \le \mu_j
\]
(since $(j, c) \in \mu$)
 and therefore
\begin{equation}\label{eq:su}
 \sigma_i-i-\mu_j+j < \sigma_i-\mu_j \le 0. 
\end{equation}
Then \eqref{eq:12} follows from \eqref{eq:su} and the claim \eqref{eq:E=dd} is proved.

By \eqref{eq:E=dd}, to show \eqref{eq:e0} it suffices to show that
\begin{equation}
  \label{eq:15}
\det(e_{\sigma,\mu}^{\alpha,\kappa}(C\setminus\{r\};i,j))_{d\le i,j\le n} = 0.  
\end{equation}
By Lemma~\ref{lem:det=0}, to show \eqref{eq:15} it is enough to show
\eqref{eq:12} for all $d\le i\le r$ and $r\le j\le n$.

By Definition~\ref{defn:E} we have
    \[
      e^{\alpha,\kappa}_{\sigma,\mu}(C\setminus\{r\};i,j)=
      \begin{cases}
      e_{\sigma_i-i-\mu_j+j}[X_{(\alpha_j,\kappa_i-1]} +
      T_{\sigma_i}-T_{\mu_j}],
      & \mbox{if $i\in C\setminus\{r\}$},\\
      e_{\sigma_i-i-\mu_j+j}[X_{(\alpha_j,\kappa_i]} +
      T_{\sigma_i-1}-T_{\mu_j}],
      & \mbox{if $i\not\in C\setminus\{r\}$}.
      \end{cases}
  \]
Suppose $i=r$ and $r\le j\le n$. Then the above equation becomes
  \begin{equation}\label{eq:XTT}
    e^{\alpha,\kappa}_{\sigma,\mu}(C\setminus\{r\};r,j)
    =e_{\sigma_r-r-\mu_j+j}[X_{(\alpha_j,\kappa_r]} + T_{\sigma_r-1}-T_{\mu_j}].
  \end{equation}
  Since
  \[
\sigma_r=(\lambda-\rho)_r>\mu_r\ge\mu_j,
\]
the sum
$X_{(\alpha_j,\kappa_r]} + T_{\sigma_r-1}-T_{\mu_j}$ is a sum of 
$\max\{0,\kappa_r-\alpha_j\} +\sigma_r-1-\mu_j$ variables. Since
\[
\kappa_r = (\wb-s\epsilon_r)_r = \wb_r-s = \alpha_r \le \alpha_j + j-r
\]
(a consequence of $\alpha \in \QPar_n$ and $r \le j$)
and therefore $\kappa_r-\alpha_j \le j-r$
and consequently $\max\{0,\kappa_r-\alpha_j\}
\le \max\{0,j-r\} = j-r$ (since $r\le j$), we have
\[
\max\{0,\kappa_r-\alpha_j\} +\sigma_r-1-\mu_j
\le j-r+\sigma_r-1-\mu_j < \sigma_r-r-\mu_j+j.
\]
Thus, \eqref{eq:ekA=0} shows that
the right hand side of \eqref{eq:XTT} is zero, and
\eqref{eq:12} holds for $i = r$ and $r\le j\le n$.

  It remains to prove \eqref{eq:12} for $d\le i\le r-1$ and $r\le j\le n$.
  Recall the assumption that $(r-1,c)\notin\rho$ or $\wb_{r-1}<\alpha_r+1$. If
  $(r-1,c)\notin\rho$, then $r=d$ and there is no integer $i$ with $d\le i\le
  r-1$. Therefore we may assume $(r-1,c)\in\rho$ and $\wb_{r-1}<\alpha_r+1$.
  Since $d\le i\le r-1$, we have $(i,c)\in \rho$, which is the leftmost cell in
  the $i$th row of $\rho$. Thus $\wb_{i} = R_0(i,c)$ and considering the case
  $i=r-1$ we also have $\wb_{r-1} = R_0(r-1,c)$. Then we obtain
\[
\kappa_i = (\wb-s\epsilon_r)_i = \wb_i = R_0(i,c) \le R_0(r-1,c) = \wb_{r-1}<
\alpha_r+1 \le \alpha_j+j-r+1
\]
(a consequence of $\alpha \in \QPar_n$ and $r \le j$),
which shows
\[
\kappa_i-1-\alpha_j < j-r <j-i.
\]
Combined with $0<j-i$ (which is because $i\le r-1<r\le j$), this yields
\begin{equation}\label{eq:kappa}
\max\{0,\kappa_i-1-\alpha_j\} <j-i.
\end{equation}
Since $(i, c) \in \rho$, we have $i\in C(\rho)=C$. Thus $i\in
C\setminus\{r\}$, and consequently
  \begin{equation}\label{eq:20}
    e^{\alpha,\kappa}_{\sigma,\mu}(C\setminus\{r\};i,j) =
    e_{\sigma_i-i-\mu_j+j}[X_{(\alpha_j,\kappa_i-1]} + T_{\sigma_i}-T_{\mu_j}].
  \end{equation}
  On the other hand, since $d\le i\le r-1$ and $j\ge r$, we have
\begin{align*}
\sigma_i &= (\lambda-\rho)_i = c-1 
\qquad \left(\text{since $(i, c) \in \rho$ and $(i, c-1) \notin \rho$}\right) \\
&\ge \mu_r
\qquad \left(\text{since $(r, c) \in \lm$}\right) \\
&\ge \mu_j.
\end{align*}
Therefore
$X_{(\alpha_j,\kappa_i-1]} + T_{\sigma_i}-T_{\mu_j}$
is a sum of $\max\{0,\kappa_i-1-\alpha_j\}+\sigma_i-\mu_j$ variables.
By \eqref{eq:kappa}, we have
\[
  \max\{0,\kappa_i-1-\alpha_j\}+\sigma_i-\mu_j < j-i +\sigma_i-\mu_j
  = \sigma_i-i-\mu_j+j,
\]
which implies (via \eqref{eq:ekA=0})
that the right hand side of \eqref{eq:20} is zero.
Thus we obtain \eqref{eq:12} and the proof is completed.
\end{proof}

The parallel recurrence relations for $\oR^{\alpha,\beta}_{\lambda,\mu}(R_0)$ and
$\EE^{\alpha,\beta}_{\lambda,\mu}(R_0)$ in Proposition~\ref{prop:rec RPP R0} can
be used to conclude that they are equal.

\begin{prop}\label{prop:main e}
  Let $\alpha,\beta\in\QPar_n$, $\lambda,\mu\in\Par_n$ with $\alpha<\beta$ and
  $\mu<\lambda$. Let $\rho=(\lm)^{(m)}$ for some $0\le m\le|\lm|$ and let
  $R_0\in \RPP^{\row(\alpha,\beta)}_{\rho}$. Then
\[
\EE^{\alpha,\beta}_{\lambda,\mu}(R_0) = \oR^{\alpha,\beta}_{\lambda,\mu}(R_0). 
\]
\end{prop}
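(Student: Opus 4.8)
The plan is to prove the identity for all $R_0$ simultaneously by downward induction on $m$, equivalently by induction on the number $|\lm|-m$ of cells of $\lm$ not yet filled by $R_0$. The engine is Proposition~\ref{prop:rec RPP R0}: its two recurrences \eqref{eq:rec M} and \eqref{eq:rec E} express $\oR^{\alpha,\beta}_{\lambda,\mu}(R_0)$ and $\EE^{\alpha,\beta}_{\lambda,\mu}(R_0)$ as sums, over the \emph{same} index range $a\le k\le\wb_r$, of the corresponding quantities for the fillings $R_0\cup\{k\}$ of the next larger diagram $(\lm)^{(m+1)}$. Since the two quantities obey identical recurrences, it will suffice to check that they agree at the base of the induction and that the hypotheses of Proposition~\ref{prop:rec RPP R0} are satisfied at every step.

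For the base case $|\lm|-m=0$ we have $\rho=\lm$, so $R_0$ is already a complete element of $\RPP^{\row(\alpha,\beta)}_\lm$ and $\RPP^{\row(\alpha,\beta)}_\lm(R_0)=\{R_0\}$; hence $\oR^{\alpha,\beta}_{\lambda,\mu}(R_0)=\owt(R_0)$. On the other side, $\rho_i=\lambda_i-\mu_i$ gives $\lambda-\rho=\mu$, so Lemma~\ref{lem:initial E} yields $E^{\alpha,B(R_0,\beta)}_{\mu,\mu}(C(\rho))=1$, and therefore $\EE^{\alpha,\beta}_{\lambda,\mu}(R_0)=\owt(R_0)\cdot 1=\owt(R_0)$. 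Thus both sides equal $\owt(R_0)$, establishing the base case.

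For the inductive step, assume $m<|\lm|$ and that the identity holds for every filling of $(\lm)^{(m+1)}$. Let $(r,c)$ be the $(m+1)$st cell of $\lm$ in the order $\prec$, so that $\rho=(\lm)^{(m)}$ is exactly the set of cells preceding $(r,c)$. The hypotheses $\alpha,\beta\in\QPar_n$, $\lambda,\mu\in\Par_n$, $\alpha<\beta$, and $\mu<\lambda$ are inherited from the statement, so Proposition~\ref{prop:rec RPP R0} applies and produces \eqref{eq:rec M} and \eqref{eq:rec E}. Each $R_0\cup\{k\}$ on the right-hand sides is a filling of $(\lm)^{(m+1)}$, so the induction hypothesis gives $\EE^{\alpha,\beta}_{\lambda,\mu}(R_0\cup\{k\})=\oR^{\alpha,\beta}_{\lambda,\mu}(R_0\cup\{k\})$ for each $k$ in the common range $a\le k\le\wb_r$. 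Summing term by term and comparing \eqref{eq:rec E} with \eqref{eq:rec M} then yields $\EE^{\alpha,\beta}_{\lambda,\mu}(R_0)=\oR^{\alpha,\beta}_{\lambda,\mu}(R_0)$, completing the induction. The genuinely hard work is already discharged inside Proposition~\ref{prop:rec RPP R0} and its supporting Lemmas~\ref{lem:s-1} and~\ref{lem:s}; the only points that remain here are the base-case identity (for which Lemma~\ref{lem:initial E} is precisely what is needed) and the observation that both recurrences carry the identical summation bounds $a$ and $\wb_r$, so the hypothesis can be applied summand by summand. I do not anticipate any serious obstacle beyond matching these two recurrences; the one thing to keep in mind is that $\lambda-\rho$ need not be a partition for intermediate $\rho$, but since $E$ and $\EE$ are defined for arbitrary $\lambda\in\NN^n$ this causes no difficulty.
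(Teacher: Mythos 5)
Your proof is correct and follows essentially the same route as the paper: induction on the number of unfilled cells $|\lm|-m$, with the base case $\rho=\lm$ handled by Lemma~\ref{lem:initial E} and the inductive step by matching the two recurrences of Proposition~\ref{prop:rec RPP R0} term by term over the common range $a\le k\le\wb_r$.
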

\begin{proof}
  We use induction on $N = |\lm|-|\rho|$. For the base case, suppose $N=0$ so
  that $\rho=\lm$. Then clearly $\oR^{\alpha,\beta}_{\lambda,\mu}(R_0) =
  \owt(R_0)$ and, by Lemma~\ref{lem:initial E}, 
  \[
\EE^{\alpha,\beta}_{\lambda,\mu}(R_0) = \owt(R_0)
E^{\alpha,B(R_0,\beta)}_{\mu,\mu}(C(\rho)) = \owt(R_0).
  \]  

  For the inductive step let $0<N\le|\lm|$ and assume the assertion for $N-1$.
  Since $\rho\ne\lm$, we can find $(r,c)\in\lm$ such that $\rho=\{(i,j)\in\lm:
  (i,j)\prec (r,c)\}$. By Proposition~\ref{prop:rec RPP R0} and the induction
  hypothesis, we obtain
\[
    \EE^{\alpha,\beta}_{\lambda,\mu}(R_0)
    = \sum_{k=a}^{\wb_r} \EE^{\alpha,\beta}_{\lambda,\mu}(R_0\cup\{k\})
    =\sum_{k=a}^{\wb_r} \oR^{\alpha,\beta}_{\lambda,\mu}(R_0\cup\{k\})
=\oR^{\alpha,\beta}_{\lambda,\mu}(R_0),
\]
where $a$ and $\wb$ are given as in Proposition~\ref{prop:rec RPP R0}.
Hence the assertion still holds for $N$ and the proof follows by induction. 
\end{proof}

Now we are ready to prove Theorem~\ref{thm:col_flag2}, which can be restated as
follows. 

\begin{thm}\label{thm:flag e2}
  Let $\alpha,\beta\in\NN^n$ and $\mu,\lambda\in\Par_n$. 
If $\alpha_i\le \alpha_{i+1}+1$ and $\beta_i\le \beta_{i+1}+1$ whenever
$\mu_i<\lambda_{i+1}$, then
\begin{equation}
\label{eq:flag e2}
E_{\lambda,\mu}^{\alpha,\beta} = \oR_{\lambda,\mu}^{\alpha,\beta}(\emptyset).
\end{equation}
\end{thm}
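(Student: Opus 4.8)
The plan is to reduce the general statement to the special case already settled by Proposition~\ref{prop:main e}. Indeed, if $\alpha,\beta\in\QPar_n$, $\alpha<\beta$, and $\mu<\lambda$, then taking $R_0=\emptyset$ in Proposition~\ref{prop:main e} and using $E^{\alpha,\beta}_{\lambda,\mu}=\EE^{\alpha,\beta}_{\lambda,\mu}(\emptyset)$ gives \eqref{eq:flag e2} at once; the entire task is to relax these three extra hypotheses. First I would dispose of the case $\mu\not\subseteq\lambda$: here $\oR^{\alpha,\beta}_{\lambda,\mu}(\emptyset)=0$ by definition, while $E^{\alpha,\beta}_{\lambda,\mu}=E^{\alpha,\beta}_{\lambda,\mu}(\emptyset)=0$ by Lemma~\ref{lem:E(C)=0}, so both sides vanish. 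Thus I may assume $\mu\subseteq\lambda$.

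The conceptual crux is that the theorem's hypothesis is exactly a $\QPar$-condition imposed on each connected component of $\lm$. Call an index $i\in\{1,\dots,n-1\}$ a \emph{cut} if $\mu_i\ge\lambda_{i+1}$. Across a cut the rows $1,\dots,i$ occupy columns $\ge\mu_i+1>\lambda_{i+1}$, hence strictly larger than the columns used by rows $i+1,\dots,n$; so $\lm$ splits into blocks of consecutive indices occupying pairwise disjoint rows and columns. I would show that both sides of \eqref{eq:flag e2} factor over these blocks. For the determinant, if $i>j$ and a cut lies between $j$ and $i$, then $\lambda_i\le\mu_j$ and $j-i<0$, so $\lambda_i-i-\mu_j+j<0$ and the $(i,j)$ entry is $0$; thus the matrix is block upper triangular and Lemma~\ref{lem:det=detdet}, applied repeatedly, factors $E^{\alpha,\beta}_{\lambda,\mu}$ as the product of its diagonal blocks, each of which is an $E$ for the restriction of $\alpha,\beta,\lambda,\mu$ to the block. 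For the RPP side, an element of $\RPP^{\row(\alpha,\beta)}_{\lm}$ is an independent choice of RPP on each block (the flagging is row-wise), and $\owt$ is multiplicative over the blocks because both $\overline{a}_j$ and $\overline{b}_j$ add up over the disjoint rows; hence $\oR^{\alpha,\beta}_{\lambda,\mu}(\emptyset)$ factors accordingly.

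It then suffices to prove \eqref{eq:flag e2} on a single block, reindexed as $\{1,\dots,n'\}$. On such a block the hypothesis reads $\alpha,\beta\in\QPar_{n'}$, and connectedness gives $\mu_i<\lambda_{i+1}$ for all $i<n'$, whence $\mu_i<\lambda_i$ for $i\ge2$ and (since $\mu_1=\lambda_1$ would force $\lambda_2>\mu_1=\lambda_1$) also $\mu_1<\lambda_1$ unless $n'=1$. I would finish by cases: if $n'=1$ with $\mu_1=\lambda_1$, both sides equal $1$ by Lemma~\ref{lem:initial E} and the single empty-row RPP; otherwise $\mu<\lambda$ on the block, and if $\alpha_k\ge\beta_k$ for some $k$ then row $k$ is nonempty with no admissible entry, so the RPP side is $0$ while the determinant is $0$ by Lemma~\ref{lem:E=0}; and if $\alpha<\beta$ then Proposition~\ref{prop:main e} applies directly.

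I expect the delicate points to be the two factorization claims: verifying the vanishing of the lower-left blocks of the matrix and the multiplicativity of $\owt$ across the disconnected pieces, together with the reindexing bookkeeping confirming that each diagonal block determinant is again an $E$ of the restricted data (the plethystic entries $X_{(\alpha_j,\beta_i]}$ and $T_{\lambda_i-1}-T_{\mu_j}$ being unaffected by the shift of indices). Once these are in place, the case analysis on a single block is routine given Lemmas~\ref{lem:E(C)=0}, \ref{lem:E=0}, \ref{lem:initial E} and Proposition~\ref{prop:main e}.
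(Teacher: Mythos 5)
Your proposal is correct and follows essentially the same route as the paper: dispose of $\mu\not\subseteq\lambda$ via Lemma~\ref{lem:E(C)=0}, factor both sides of \eqref{eq:flag e2} over the connected components of $\lm$ (the paper peels off one equal row or one cut at a time using Lemmas~\ref{lem:det=det1det} and \ref{lem:det=detdet} and induction, whereas you decompose into all blocks at once — a cosmetic difference), kill the case $\alpha_k\ge\beta_k$ with Lemma~\ref{lem:E=0}, and finish with Proposition~\ref{prop:main e}. The factorization details you flag as delicate (vanishing of the lower-left entries across a cut, multiplicativity of $\owt$ over disjoint blocks) are checked in the paper exactly as you sketch them.
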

\begin{proof}
  We will successively reduce the cases so that we eventually have the
  assumptions $\alpha,\beta\in\QPar_n$, $\alpha<\beta$ and $\mu<\lambda$ in
  Proposition~\ref{prop:main e}. For a diagram $\sigma$, we denote by
  $\delta(\sigma)$ the diagram obtained by translating $\sigma$ down by one row,
  so that $\delta^k(\sigma)=\{(i+k,j): (i,j)\in \sigma\}$ for all $k\ge0$. Note
  that there is a canonical bijection between the RPPs $R$ of shape $\sigma$ and
  the RPPs $R'$ of shape $\delta^k(\sigma)$, and that this bijection satisfies
  $\owt(R') = \owt(R)$.

  If $\mu\not\subseteq\lambda$, both sides of the equation
  \eqref{eq:flag e2} are zero by
  Lemma~\ref{lem:E(C)=0} and the definition of
  $\oR_{\lambda,\mu}^{\alpha,\beta}(\emptyset)$. Hence we may assume
  $\mu\subseteq\lambda$.
  Thus, either $\lambda_k = \mu_k$ for some $1 \le k \le n$, or $\mu < \lambda$.

  Suppose that $\lambda_k=\mu_k$ for some $1\le k\le n$. Then, for $k\le i\le n$
  and $1\le j\le k$, since $\lambda_i-i-\mu_j+j\le \lambda_k-k-\mu_k+k=0$ and
  the equality holds if and only if $i=j=k$, we have
  \[
    e^{\alpha,\beta}_{\lambda,\mu}(i,j) =
    e_{\lambda_i-i-\mu_j+j}[X_{(\alpha_j,\beta_i]}+T_{\lambda_i-1}-T_{\mu_j}] = \chi(i=j=k).
  \]
  By Lemma~\ref{lem:det=det1det} this shows that
  \[
    E_{\lambda,\mu}^{\alpha,\beta} = E_{\lambda^{(1)},\mu^{(1)}}^{\alpha^{(1)},\beta^{(1)}}
    E_{\lambda^{(2)},\mu^{(2)}}^{\alpha^{(2)},\beta^{(2)}},
  \]
  where $\gamma^{(1)}=(\gamma_1,\dots,\gamma_{k-1})$ and
  $\gamma^{(2)}=(\gamma_{k+1},\dots,\gamma_{n})$ for each
  $\gamma\in\{\alpha,\beta,\lambda,\mu\}$. Since the skew shape $\lm$ is the
  disjoint union of $\lambda^{(1)}/\mu^{(1)}$ and $\delta^k(\lambda^{(2)}/\mu^{(2)})$, the
  definition of $\oR_{\lambda,\mu}^{\alpha,\beta}(C)$ immediately gives
  \[
    \oR_{\lambda,\mu}^{\alpha,\beta}(\emptyset) =
    \oR_{\lambda^{(1)},\mu^{(1)}}^{\alpha,\beta}(\emptyset)
    \oR_{\delta^k(\lambda^{(2)}),\delta^k(\mu^{(2)})}^{\alpha,\beta}(\emptyset)=
    \oR_{\lambda^{(1)},\mu^{(1)}}^{\alpha^{(1)},\beta^{(1)}}(\emptyset)
    \oR_{\lambda^{(2)},\mu^{(2)}}^{\alpha^{(2)},\beta^{(2)}}(\emptyset).
  \]
  Hence, by induction, it suffices to consider the case $\mu<\lambda$.

  Suppose that there is an integer $k\in[n-1]$ such that $\mu_k\ge
  \lambda_{k+1}$. Then we have
  \[
    \oR_{\lambda,\mu}^{\alpha,\beta}(\emptyset) =
    \oR_{\lambda^{(1)},\mu^{(1)}}^{\alpha,\beta}(\emptyset)
    \oR_{\delta^k(\lambda^{(2)}),\delta^k(\mu^{(2)})}^{\alpha,\beta}(\emptyset)=
    \oR_{\lambda^{(1)},\mu^{(1)}}^{\alpha^{(1)},\beta^{(1)}}(\emptyset)
    \oR_{\lambda^{(2)},\mu^{(2)}}^{\alpha^{(2)},\beta^{(2)}}(\emptyset).
  \]
  since the skew shape $\lm$ is the disjoint union of $\lambda^{(1)}/\mu^{(1)}$
  and $\delta^k(\lambda^{(2)}/\mu^{(2)})$, where $\gamma^{(1)}=(\gamma_1,\dots,\gamma_k)$
  and $\gamma^{(2)}=(\gamma_{k+1},\dots,\gamma_n)$ for each
  $\gamma\in\{\alpha,\beta,\lambda,\mu\}$. 
  For all $k+1\le i\le n$ and $1\le j\le k$, we have
  $e^{\alpha,\beta}_{\lambda,\mu}(i,j)=0$ because
 \[
    \lambda_i-i-\mu_j+j \le \lambda_{k+1}-(k+1)-\mu_k+k <0.
  \]
  By Lemma~\ref{lem:det=detdet}, this shows that
  \[
    E_{\lambda,\mu}^{\alpha,\beta} = E_{\lambda^{(1)},\mu^{(1)}}^{\alpha^{(1)},\beta^{(1)}}
    E_{\lambda^{(2)},\mu^{(2)}}^{\alpha^{(2)},\beta^{(2)}}.
  \]
  Thus, by induction, we may assume that $\mu_k<\lambda_{k+1}$ for all $k\in[n-1]$. In
  this case by assumption we have $\alpha,\beta\in\QPar_n$.

  Suppose now that $\alpha_k\ge\beta_k$ for some $1\le k\le n$. Then by
  Lemma~\ref{lem:E=0} we have $E_{\lambda,\mu}^{\alpha,\beta}=0$. Again, by
  definition, $\oR_{\lambda,\mu}^{\alpha,\beta}(\emptyset)=0$.

  The remaining case is that $\mu<\lambda$ and $\alpha<\beta$. This is done in
  Proposition~\ref{prop:main e} with $R_0=\emptyset$ and the proof is completed.
\end{proof}

Finally we show that Theorems~\ref{thm:col_flag} and \ref{thm:col_flag2} are equivalent.

\begin{prop}\label{prop:equivalence}
  Let $\alpha,\beta\in\NN^n$ and $\mu,\lambda\in\Par_n$. Then
  \begin{multline*}
\det \left(
    e_{\lambda_i-\mu_j-i+j}(x_{\alpha_j+1},\dots, x_{\beta_i},t_{\mu_j+1},\dots,t_{\lambda_i-1})
  \right)_{1\le i,j\le n}     \\
  = \det \left(
    e_{\lambda_i-\mu_j-i+j}[X_{(\alpha_j,\beta_i]}+T_{\lambda_i-1}-T_{\mu_j}]
  \right)_{1\le i,j\le n}.
  \end{multline*}
\end{prop}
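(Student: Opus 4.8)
The plan is to show that the two matrices have \emph{identical} zero-block structure, governed only by the degrees $\lambda_i-\mu_j-i+j$, and then to run exactly the same sequence of reductions used in the proof of Theorem~\ref{thm:flag e2}, performing each one on both determinants at once, until the two matrices literally coincide entrywise. Write $P(i,j)$ for the $(i,j)$ entry of the right-hand determinant and $Q(i,j)$ for that of the left-hand one. Both equal $e_{\lambda_i-\mu_j-i+j}$ evaluated at a plethystic argument whose $x$-part is the \emph{same} sum $X_{(\alpha_j,\beta_i]}$; they differ only in the $t$-part, namely $T_{\lambda_i-1}-T_{\mu_j}$ for $P$ versus the honest sum $T_{(\mu_j,\lambda_i-1]}$ for $Q$. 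When $\mu_j\le\lambda_i-1$ these two $t$-parts agree as genuine sums of variables, so $P(i,j)=Q(i,j)$; thus $P(i,j)$ can differ from $Q(i,j)$ only when $\mu_j\ge\lambda_i$. The decisive observation is that whether an entry is a guaranteed $0$ (degree $<0$) or a guaranteed $1$ (degree $=0$) is dictated by $\lambda_i-\mu_j-i+j$ alone and not by the plethystic argument, since $e_{<0}=0$ and $e_0=1$ regardless. Hence $P$ and $Q$ share precisely the same pattern of forced zeros and diagonal ones.

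Next I would carry out the reductions, applying each simultaneously to both determinants. If $\mu\not\subseteq\lambda$, both vanish by the degree argument of Lemma~\ref{lem:E(C)=0}, which uses only negativity of degrees and so applies verbatim to $Q$. If $\lambda_k=\mu_k$ for some $k$, then for $k\le i$ and $j\le k$ the degree is $\le 0$ with equality only at $(k,k)$, so Lemma~\ref{lem:det=det1det} splits \emph{both} determinants along the same $[1,k-1]$ and $[k+1,n]$ blocks. If $\mu_k\ge\lambda_{k+1}$ for some $k\in[n-1]$, then the degree is $<0$ throughout the block $k+1\le i$, $j\le k$, and Lemma~\ref{lem:det=detdet} factors \emph{both} determinants in the same way. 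Because $\lambda_i-\mu_j-i+j$ is invariant under a common shift of $i$ and $j$, each resulting sub-determinant is again of the $P$-type or $Q$-type for the truncated data $(\alpha_{k+1},\dots)$, $(\beta_{k+1},\dots)$, $(\lambda_{k+1},\dots)$, $(\mu_{k+1},\dots)$, so induction on $n$ applies.

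The remaining base case is $\mu\subseteq\lambda$ with $\mu_k<\lambda_k$ for all $k$ and $\mu_k<\lambda_{k+1}$ for all $k\in[n-1]$. These inequalities eliminate every position where $P$ and $Q$ could disagree: if $\mu_j\ge\lambda_i$ then $i\le j$ is impossible, since for $i<j$ one has $\mu_j\le\mu_i<\lambda_{i+1}\le\lambda_i$ and for $i=j$ one has $\mu_i<\lambda_i$; thus $\mu_j\ge\lambda_i$ forces $i>j$, whence the degree $\lambda_i-\mu_j-i+j$ is strictly negative and both entries vanish. In all other positions $\mu_j\le\lambda_i-1$, so the $t$-parts agree and $P(i,j)=Q(i,j)$. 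Therefore the two matrices are identical and their determinants are trivially equal, completing the induction.

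I expect the main obstacle to be organizational rather than conceptual: one must verify carefully that every invocation of Lemmas~\ref{lem:det=0}, \ref{lem:det=detdet}, and \ref{lem:det=det1det} is legitimate for both matrices at once, i.e.\ that the zero-block and unit-triangular patterns are produced by the degrees alone and are hence common to $P$ and $Q$, and that the reductions always terminate in the connected base case above, where entrywise equality makes the statement immediate. Once this is set up, the argument is a direct parallel of the proof of Theorem~\ref{thm:flag e2}.
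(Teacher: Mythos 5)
Your proposal is correct, but it is organized differently from the paper's proof, and the comparison is instructive. The paper's argument is purely local and needs no induction: assuming $\mu\subseteq\lambda$, it fixes a single position $(i,j)$ where the two entries could differ, deduces from $\lambda_i\le\mu_j\le\lambda_j$ (and the vanishing of negative-degree entries) that necessarily $\lambda_j=\mu_j$, and then invokes the \emph{second} claim of Lemma~\ref{lem:det=det1det} --- that every nonzero term in the determinant expansion must use the $(j,j)$ entry --- to conclude that the offending $(i,j)$ entry contributes to neither determinant; hence $\det(A)=\det(B)$ without ever factoring anything. Your route instead replays the full reduction machinery of Theorem~\ref{thm:flag e2} on both matrices in parallel (splitting at $\lambda_k=\mu_k$ via Lemma~\ref{lem:det=det1det} and at $\mu_k\ge\lambda_{k+1}$ via Lemma~\ref{lem:det=detdet}, inducting on $n$) until you reach a base case where $\mu<\lambda$ and $\mu_k<\lambda_{k+1}$ for all $k$, at which point your observation that disagreement forces $\mu_j\ge\lambda_i$, hence $i>j$ and negative degree, shows the matrices coincide entrywise. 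Both arguments rest on the same two facts --- that the zero/one pattern is governed by the degrees $\lambda_i-\mu_j-i+j$ alone, and that entries with $\mu_j<\lambda_i$ agree --- but the paper extracts the conclusion directly from the column structure at each problematic $j$, which is shorter and avoids both the induction and the $\mu_k\ge\lambda_{k+1}$ splitting that your version carries along. Your approach buys nothing extra here, but it is sound, and it has the mild advantage of making the proposition visibly a by-product of the same reduction scheme used for the main theorem.
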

\begin{proof}
  If $\mu\not\subseteq\lambda$, both sides of the equation are zero by the same
  argument as in the proof of Lemma~\ref{lem:E(C)=0}. Hence we may assume that
  $\mu\subseteq\lambda$. Let $A$ and $B$ be the matrices in the left hand side
  and in the right hand side respectively. We investigate the contribution of
  the $(i,j)$-entries $A_{i,j}$ and $B_{i,j}$ in the determinants when
  $A_{i,j}\ne B_{i,j}$.

  Suppose $A_{i,j}\ne B_{i,j}$. Since $\mu_j<\lambda_i$ implies $A_{i,j}=
  B_{i,j}$, we must have $\lambda_i\le \mu_j$. Note that $\lambda_i\le \mu_j\le
  \lambda_j$. If $\lambda_i<\lambda_j$, then $i>j$ and $\lambda_i-\mu_j-i+j<0$,
  which implies $A_{i,j}=B_{i,j}=0$, a contradiction. Thus we must have
  $\lambda_i=\lambda_j$. Since $\lambda_i\le \mu_j\le \lambda_j$, we also have
  $\mu_j=\lambda_j$. We now use an argument in the proof of
  Theorem~\ref{thm:flag e2}. For $j\le r\le n$ and $1\le s\le j$, since
  $\lambda_r-\mu_s-r+s\le \lambda_j-\mu_j-j+j=0$ and the equality holds if and
  only if $r=s=j$, we have
\[
A_{r,s} = B_{r,s} = \chi(r=s=j).
\]
Therefore, by the second claim of Lemma~\ref{lem:det=det1det}, each nonzero term
in the expansion of $\det(A)$ and $\det(B)$ must contain the $(j,j)$ entry,
which is $1$ for both matrices. Thus if $A_{i,j}\ne B_{i,j}$, these entries
$A_{i,j}$ and $B_{i,j}$ do not contribute to the determinants, which implies
$\det(A)=\det(B)$.
\end{proof}

Theorem~\ref{thm:col_flag} now follows from Theorem~\ref{thm:col_flag2} and
Proposition~\ref{prop:equivalence}.

\section{A proof of the Jacobi--Trudi formula for $\wg_{\lambda/\mu}^{\row(\alpha,\beta)}$}
\label{sec:proof-jacobi-trudi2}

In this section we prove the Jacobi--Trudi formula for
$\wg_{\lambda/\mu}^{\row(\alpha,\beta)}$ in Theorem~\ref{thm:row_flag}. The
proof is similar to (but not exactly the same as) that in the previous section.

We use the notation in Definitions~\ref{defn:R0} and \ref{defn:R} from the
previous section. The notation below will also be used throughout this section.

\begin{defn}\label{defn:H}
For $\alpha,\beta,\mu,\lambda\in\NN^n,C\subseteq[n]$, and $1\le i,j\le n$,
define
\begin{align*}
  h_{\lambda,\mu}^{\alpha,\beta}(i,j) &=
  h_{\lambda_i-i-\mu_j+j}[X_{(\alpha_j,\beta_i]}+T_{i-1}-T_{j-1}],\\
  \ovh_{\lambda,\mu}^{\alpha,\beta}(i,j)
  &= \chi(\alpha_j<\beta_i) h_{\lambda,\mu}^{\alpha,\beta}(i,j),\\ 
  h_{\lambda,\mu}^{\alpha,\beta}(C;i,j)
  &= \begin{cases}
      h_{\lambda,\mu}^{\alpha,\beta}(i,j), & \mbox{if $i\not\in C$},\\
      \ovh_{\lambda,\mu}^{\alpha,\beta}(i,j), & \mbox{if $i\in C$},
    \end{cases}\\
  H_{\lambda,\mu}^{\alpha,\beta}(C)
  &= \det \left(h_{\lambda,\mu}^{\alpha,\beta}(C;i,j) \right)_{1\le i,j\le n},\\
  H_{\lambda,\mu}^{\alpha,\beta}
  &= H_{\lambda,\mu}^{\alpha,\beta}(\emptyset),\\
  \ovH_{\lambda,\mu}^{\alpha,\beta}
  &= H_{\lambda,\mu}^{\alpha,\beta}([n]).
\end{align*}
If $\mu\subseteq\lambda$, $\rho=(\lm)^{(m)}$, and
$R_0\in\RPP^{\row(\alpha,\beta)}_\rho$, we define
\begin{equation}
  \label{eq:defHH}
  \HH_{\lambda,\mu}^{\alpha,\beta}(R_0) = \wt(R_0) 
  \ovH_{\lambda-\rho,\mu}^{\alpha,B(R_0,\beta)}.
\end{equation}
\end{defn}

Note that in the definition of $\HH_{\lambda,\mu}^{\alpha,\beta}(R_0)$, we used
$\ovH$ instead of $H$. Using the notation above, Theorem~\ref{thm:row_flag} can
be rewritten as
\[
H_{\lambda,\mu}^{\alpha,\beta} = \R_{\lambda,\mu}^{\alpha,\beta}(\emptyset).
\]
We will show that both sides of the above equation satisfy the same
recurrence relation.

\subsection{Technical lemmas}
\label{sec:technical-lemmas-1}

In this subsection we give a list of lemmas that will be used to prove
Theorem~\ref{thm:row_flag}.

Let $\RPar_n$ denote the set of $\alpha\in\NN^n$ such that $\alpha_i\le
\alpha_{i+1}$ for all $i\in[n-1]$.
Note that if $\alpha\in\RPar_n$ and $1\le i\le j\le n$,
then $\alpha_i\le \alpha_j$.
Clearly, $\RPar_n \subseteq \QPar_n$, where $\QPar_n$ is defined as in
Section~\ref{sec:proof-jacobi-trudi1}.

\begin{lem}\label{lem:pleth_h}
  Let $Z$ be any formal power series with integer coefficients and let $z$ be a
  (single) variable. Then, for any integer $k$,
  \[
h_k[Z]=h_k[Z-z] + z h_{k-1}[Z].
  \]  
\end{lem}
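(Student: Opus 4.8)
The plan is to follow essentially the same route as the proof of Lemma~\ref{lem:e_k[Z]}, namely to expand one side using the plethystic subtraction formula \eqref{eq:h_k} and then collect terms. The key observation is that $z$ is a single variable, so $e_i(z) = 0$ for all $i \ge 2$, while $e_0(z) = 1$ and $e_1(z) = z$; this collapses the sum in \eqref{eq:h_k} to just two surviving summands.

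First I would dispose of the degenerate range $k \le 0$: when $k < 0$ all three of $h_k[Z]$, $h_k[Z-z]$, and $h_{k-1}[Z]$ vanish, and when $k = 0$ the identity reads $1 = 1 + z\cdot 0$, so in both cases the claim is immediate. Then, assuming $k \ge 1$, I would apply \eqref{eq:h_k} with $A = Z$ and $B = z$. Since only the $i = 0$ and $i = 1$ terms survive, this gives
\[
h_k[Z - z] = h_k[Z] - z\, h_{k-1}[Z],
\]
and rearranging yields exactly $h_k[Z] = h_k[Z-z] + z\, h_{k-1}[Z]$, as desired.

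I do not anticipate any real obstacle here. Unlike the elementary-symmetric case in Lemma~\ref{lem:e_k[Z]}, where the auxiliary term was $e_{k-1}[Z-z]$, the present identity is actually cleaner because the plethystic expansion of $h_k[Z-z]$ already isolates $h_{k-1}[Z]$, with the full argument $Z$ rather than $Z-z$. The only point worth double-checking is the sign bookkeeping in \eqref{eq:h_k}: the $i=1$ term carries the factor $(-1)^1 e_1(z) = -z$, which is precisely what produces the $+z\,h_{k-1}[Z]$ once it is moved to the other side of the equation.
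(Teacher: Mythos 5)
Your proposal is correct and follows exactly the paper's own argument: handle $k\le 0$ directly, then apply \eqref{eq:h_k} with $B=z$ a single variable so that only the $i=0$ and $i=1$ terms survive, giving $h_k[Z-z]=h_k[Z]-z\,h_{k-1}[Z]$. The sign bookkeeping you describe matches the paper's computation precisely.
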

\begin{proof}
  Since $h_k(x)=0$ for $k<0$ and $h_0(x)=1$, the equation holds for $k\le 0$.
  For $k\ge1$, by \eqref{eq:h_k}, we have
  \[
    h_k[Z-z]=\sum_{i=0}^k (-1)^i e_i(z)h_{k-i}[Z]=
    h_{k}[Z]- zh_{k-1}[Z],
  \]  
which is equivalent to the equation in the lemma.
\end{proof}

\begin{lem}\label{lem:h=0}
  Let $i,j,k$ be positive integers such that
  $j\ge i$ and $k>j-i$. Then
  \[
    h_k[T_{i-1}-T_{j-1}]=0.
  \] 
\end{lem}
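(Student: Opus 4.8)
The plan is to reduce this to the already-established vanishing of elementary symmetric functions of short alphabets, by exploiting the plethystic sign rule at a negative alphabet. The key observation is that, because $j\ge i$, the argument $T_{i-1}-T_{j-1}$ is not a positive sum of variables but rather the negative of one. First I would rewrite it explicitly: since $j-1\ge i-1$, the common terms $t_1,\dots,t_{i-1}$ cancel, giving
\[
T_{i-1}-T_{j-1} = -(t_i+t_{i+1}+\dots+t_{j-1}) = -A,
\]
where $A=t_i+t_{i+1}+\dots+t_{j-1}$ is a sum of exactly $j-i$ variables (with the convention $A=0$ when $j=i$).

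Next I would invoke the antipode identity \eqref{eq:h_k[-A]}, namely $h_k[-A]=(-1)^k e_k[A]$, to convert the problem from $h_k$ to $e_k$:
\[
h_k[T_{i-1}-T_{j-1}] = h_k[-A] = (-1)^k e_k[A] = (-1)^k e_k(t_i,\dots,t_{j-1}).
\]
Since $A$ is a sum of $j-i$ variables and the hypothesis $k>j-i$ means we are taking the $k$th elementary symmetric function of fewer than $k$ variables, this vanishes by \eqref{eq:ekA=0} (equivalently, by the remark that $e_k(z)=0$ whenever $|I|<k$). Hence $h_k[T_{i-1}-T_{j-1}]=0$, as claimed. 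The degenerate case $j=i$ is automatically covered, since then the argument is $0$ and $k>j-i=0$ forces $k\ge1$, for which $h_k[0]=0$.

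There is no genuine obstacle in this argument, but it is worth flagging one conceptual point that distinguishes it from the companion Lemma~\ref{lem:e_k=0}. In that lemma the argument $T_i-T_j$ (with $i\ge j$) is a \emph{positive} sum of variables, and $e_k$ of a short positive alphabet vanishes directly. Here one cannot read off the vanishing in the same way, because $h_k$ of a genuinely positive alphabet is never zero. The vanishing is entirely a consequence of the argument being a \emph{negative} alphabet, which is forced precisely by the hypothesis $j\ge i$; this is why the sign rule \eqref{eq:h_k[-A]}, turning $h_k$ into $\pm e_k$, is the essential ingredient rather than a mere convenience.
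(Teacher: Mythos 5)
Your proof is correct and is essentially identical to the paper's: both rewrite $T_{i-1}-T_{j-1}$ as the negative of the sum $t_i+\dots+t_{j-1}$, apply the antipode rule \eqref{eq:h_k[-A]} to get $(-1)^k e_k(t_i,\dots,t_{j-1})$, and conclude by the vanishing of $e_k$ on an alphabet of fewer than $k$ variables.
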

\begin{proof}
  By the property \eqref{eq:h_k[-A]} of the plethystic substitution, we have
  \[
    h_k[T_{i-1}-T_{j-1}]=h_k[-t_i-t_{i+1}-\dots-t_{j-1}]
    = (-1)^k e_k(t_i,t_{i+1},\dots,t_{j-1}) = 0,
  \]  
  as desired.
\end{proof}

\begin{lem}\label{lem:initial H}
  Let $\alpha,\beta\in\NN^n$, $\mu\in\Par_n$, and $C\subseteq[n]$. If
  $\alpha<\beta$, then
\[
  H^{\alpha,\beta}_{\mu,\mu}(C) = 1.
\] 
\end{lem}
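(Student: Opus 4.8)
The plan is to mimic the proof of Lemma~\ref{lem:initial E}: I would show that the matrix $\left(h^{\alpha,\beta}_{\mu,\mu}(C;i,j)\right)_{1\le i,j\le n}$ is upper unitriangular, so that its determinant equals $1$. The whole argument rests on the behaviour of the degree $\mu_i-i-\mu_j+j$ on and below the diagonal. Since $\mu\in\Par_n$, for $i\ge j$ we have $\mu_i\le\mu_j$ and $-i+j\le0$, so $\mu_i-i-\mu_j+j\le0$, with equality if and only if $i=j$.

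For the strictly-below-diagonal entries (that is, $i>j$) the degree is strictly negative. Since $h_k=0$ for $k<0$, it follows that $h_{\mu_i-i-\mu_j+j}[\,\cdot\,]=0$, and hence both $h^{\alpha,\beta}_{\mu,\mu}(i,j)$ and its truncation $\ovh^{\alpha,\beta}_{\mu,\mu}(i,j)=\chi(\alpha_j<\beta_i)\,h^{\alpha,\beta}_{\mu,\mu}(i,j)$ vanish. Therefore $h^{\alpha,\beta}_{\mu,\mu}(C;i,j)=0$ regardless of whether $i\in C$, and the matrix is lower-vanishing below the diagonal. On the diagonal ($i=j$) the degree is $0$, so $h_0[X_{(\alpha_i,\beta_i]}+T_{i-1}-T_{i-1}]=1$.

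The one genuinely new point, which is where the hypothesis $\alpha<\beta$ enters, concerns the diagonal entries when $i\in C$. If $i\notin C$ the $(i,i)$ entry is simply this value $1$; if $i\in C$ it equals $\chi(\alpha_i<\beta_i)\cdot 1$, which is again $1$ precisely because $\alpha<\beta$ forces $\alpha_i<\beta_i$. This is exactly what distinguishes the statement from Lemma~\ref{lem:initial E}: the indicator factor $\chi(\alpha_j<\beta_i)$ built into $\ovh$ could otherwise annihilate a diagonal entry if $\alpha_i=\beta_i$ were permitted. Having checked that the matrix is upper triangular with every diagonal entry equal to $1$, I conclude $H^{\alpha,\beta}_{\mu,\mu}(C)=1$. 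I do not anticipate any real obstacle here; the argument is a routine triangularity computation, and the only subtlety worth flagging is the necessity of the assumption $\alpha<\beta$ for the diagonal entries in the $C$ case.
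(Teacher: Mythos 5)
Your proof is correct and follows exactly the paper's approach: the paper proves Lemma~\ref{lem:initial H} by invoking the same upper-unitriangularity argument as in Lemma~\ref{lem:initial E}, noting precisely as you do that the hypothesis $\alpha<\beta$ is needed so that the indicator $\chi(\alpha_i<\beta_i)$ does not kill a diagonal entry when $i\in C$. Your write-up just spells out the details that the paper leaves implicit.
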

\begin{proof}
  This can be proved by the same argument as in the proof of
  Lemma~\ref{lem:initial E}. Note that we need the condition $\alpha<\beta$ to
  ensure that the $(i,i)$ entry 
\[
h^{\alpha,\beta}_{\mu,\mu}(C;i,i) =
  \chi(\alpha_i<\beta_i) h^{\alpha,\beta}_{\mu,\mu}(i,i)
\]
is $1$ for $i\in C$.
\end{proof}

\begin{lem}\label{lem:H=0}
  Let $\alpha,\beta\in\NN^n$ and $\lambda,\mu\in\Par_n$. If
  $\mu\not\subseteq\lambda$, then for any $C\subseteq[n]$,
\[
  H^{\alpha,\beta}_{\lambda,\mu}(C) = 0.
\] 
\end{lem}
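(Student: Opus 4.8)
The plan is to adapt the proof of Lemma~\ref{lem:E(C)=0} essentially verbatim, replacing $e$ by $h$ throughout. The only facts I would use are that $h_m[A]=0$ whenever $m<0$ (which holds because $h_m=0$ for negative $m$, by the conventions fixed in Section~\ref{sec:preliminaries}) and the determinant-vanishing criterion of Lemma~\ref{lem:det=0}.

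First I would use the hypothesis $\mu\not\subseteq\lambda$ to select an index $1\le k\le n$ with $\mu_k>\lambda_k$. Next I would check that the matrix $\left(h_{\lambda,\mu}^{\alpha,\beta}(C;i,j)\right)_{1\le i,j\le n}$ has a zero block in its lower-left corner: for all $k\le i\le n$ and $1\le j\le k$, monotonicity of the partitions gives $\lambda_i\le\lambda_k$ and $\mu_j\ge\mu_k$, so
\[
\lambda_i-i-\mu_j+j\le \lambda_k-k-\mu_k+k=\lambda_k-\mu_k<0.
\]
Hence the degree of the complete homogeneous symmetric function in the $(i,j)$ entry is negative, forcing $h_{\lambda_i-i-\mu_j+j}[X_{(\alpha_j,\beta_i]}+T_{i-1}-T_{j-1}]=0$.

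The one point worth verifying is that this vanishing survives the definition of $h_{\lambda,\mu}^{\alpha,\beta}(C;i,j)$, which differs from the plain entry $h_{\lambda,\mu}^{\alpha,\beta}(i,j)$ by the factor $\chi(\alpha_j<\beta_i)$ when $i\in C$. Since multiplying zero by $\chi(\alpha_j<\beta_i)\in\{0,1\}$ again yields zero, I obtain $h_{\lambda,\mu}^{\alpha,\beta}(C;i,j)=0$ for all $k\le i\le n$ and $1\le j\le k$, regardless of whether $i\in C$; in particular the conclusion is independent of $C$. Finally I would invoke the first statement of Lemma~\ref{lem:det=0} to conclude $H^{\alpha,\beta}_{\lambda,\mu}(C)=0$. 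I do not expect any genuine obstacle here: the argument is routine, and the only mild subtlety is confirming that the extra $\chi$ factor and the case split on $i\in C$ are immaterial, precisely because the underlying $h$-value already vanishes before these modifications are applied.
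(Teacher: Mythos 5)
Your proposal is correct and matches the paper's approach: the paper proves this lemma by citing the argument of Lemma~\ref{lem:E(C)=0} verbatim, which is exactly the block-of-zeros argument you give, and your observation that the factor $\chi(\alpha_j<\beta_i)$ is immaterial because the underlying entry already vanishes is the right (and only) point needing a check.
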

\begin{proof}
This can be proved by the same argument as in the proof of
Lemma~\ref{lem:E(C)=0}. 
\end{proof}

\begin{lem}\label{lem:HC=0 mu<la}
  Let $\alpha,\beta\in\RPar_n$ and $\mu,\lambda\in\Par_n$. Suppose that
  $\alpha_k\ge \beta_k$ and $\mu_k<\lambda_k$ for some $1\le k\le n$. Then for
  any $C\subseteq [n]$,
\[
H^{\alpha,\beta}_{\lambda,\mu}(C)  = 0.
\]
\end{lem}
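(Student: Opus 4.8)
The plan is to reprise the argument of Lemma~\ref{lem:E=0}: I would exhibit a rectangular block of zeros in the upper-right corner of the matrix defining $H^{\alpha,\beta}_{\lambda,\mu}(C)$ and then invoke the second part of Lemma~\ref{lem:det=0}. Concretely, the goal is to show that $h_{\lambda,\mu}^{\alpha,\beta}(C;i,j)=0$ for all $1\le i\le k$ and $k\le j\le n$; Lemma~\ref{lem:det=0} then forces the determinant to vanish. Throughout I fix such a pair $(i,j)$, noting that $i\le k\le j$ and in particular $i\le j$.

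First I would use the hypothesis $\alpha,\beta\in\RPar_n$, so that both sequences are weakly increasing, together with the assumption $\alpha_k\ge\beta_k$, to collapse the $x$-part of the plethystic argument. Since $i\le k\le j$, monotonicity gives $\alpha_j\ge\alpha_k\ge\beta_k\ge\beta_i$, and hence $X_{(\alpha_j,\beta_i]}=0$. This already settles the case $i\in C$: there $h_{\lambda,\mu}^{\alpha,\beta}(C;i,j)=\ovh_{\lambda,\mu}^{\alpha,\beta}(i,j)=\chi(\alpha_j<\beta_i)\,h_{\lambda,\mu}^{\alpha,\beta}(i,j)=0$, because $\alpha_j\ge\beta_i$ makes the characteristic factor vanish.

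For the case $i\notin C$ the entry reduces to $h_{\lambda,\mu}^{\alpha,\beta}(i,j)=h_{\lambda_i-i-\mu_j+j}[T_{i-1}-T_{j-1}]$, and here I would invoke Lemma~\ref{lem:h=0}. Since $\lambda,\mu\in\Par_n$ and $i\le k\le j$, we have $\lambda_i\ge\lambda_k>\mu_k\ge\mu_j$, so $\lambda_i-\mu_j>0$ and the degree satisfies $\lambda_i-i-\mu_j+j>j-i$. Because $j\ge i$, Lemma~\ref{lem:h=0} applies and yields $h_{\lambda_i-i-\mu_j+j}[T_{i-1}-T_{j-1}]=0$. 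Thus $h_{\lambda,\mu}^{\alpha,\beta}(C;i,j)=0$ in both cases, and Lemma~\ref{lem:det=0} completes the argument.

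The main point to handle with care—and the only real difference from the elementary case—is that $h_m[A]$ does \emph{not} vanish simply because $A$ is a short positive sum of variables, so one cannot copy Lemma~\ref{lem:E=0} verbatim. The vanishing must instead be extracted from the genuinely negative argument $T_{i-1}-T_{j-1}=-(t_i+\dots+t_{j-1})$, which is exactly the situation Lemma~\ref{lem:h=0} is built for. Consequently the crux is to first kill the $x$-contribution using the $\RPar_n$ monotonicity and $\alpha_k\ge\beta_k$, and only then read off the vanishing from the $t$-part; the whole proof is short once these two observations are in place.
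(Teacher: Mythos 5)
Your proposal is correct and follows essentially the same route as the paper: reduce to showing the entries vanish for $1\le i\le k\le j\le n$, kill the $x$-part via $\alpha_j\ge\alpha_k\ge\beta_k\ge\beta_i$, and then apply Lemma~\ref{lem:h=0} to $h_{\lambda_i-i-\mu_j+j}[T_{i-1}-T_{j-1}]$ using $\lambda_i-\mu_j>0$. The only cosmetic difference is that you treat the case $i\in C$ via the vanishing of $\chi(\alpha_j<\beta_i)$, whereas the paper simply observes that $h^{\alpha,\beta}_{\lambda,\mu}(C;i,j)$ is a multiple of $h^{\alpha,\beta}_{\lambda,\mu}(i,j)$ and handles both cases at once.
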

\begin{proof}
  Since $h^{\alpha,\beta}_{\lambda,\mu}(C;i,j)$ is a multiple of
  $h^{\alpha,\beta}_{\lambda,\mu}(i,j)$, by Lemma~\ref{lem:det=0}, it suffices
  to show $h^{\alpha,\beta}_{\lambda,\mu}(i,j)=0$ assuming $1\le i\le k$ and
  $k\le j\le n$. From $\alpha, \beta \in \RPar_n$, we obtain
  $\alpha_j\ge \alpha_k\ge \beta_k\ge \beta_i$, so
  $X_{(\alpha_j,\beta_i]} = 0$. Because of this, and of
  \[
\lambda_i-i-\mu_j+j\ge \lambda_k-\mu_k-i+j>j-i,
  \]  
  by Lemma~\ref{lem:h=0},
  \[
h^{\alpha,\beta}_{\lambda,\mu}(i,j) = h_{\lambda_i-i-\mu_j+j}[X_{(\alpha_j,\beta_i]}+T_{i-1}-T_{j-1}] = 
h_{\lambda_i-i-\mu_j+j}[T_{i-1}-T_{j-1}]=0,
  \]
  as desired.
\end{proof}

\begin{lem}\label{lem:H(C)=H(C,k)}
  Let $\alpha\in\RPar_n$, $\beta\in\NN^n$, $\mu\in\Par_n$, and $\lambda\in\NN^n$. Suppose
  that there is an integer $1\le k\le n$ such that $\alpha_k<\beta_k$ and
  $\mu_k<\lambda_k$. Then for any subset $C\subseteq[n]$ with $k\in C$, we have
\[
H^{\alpha,\beta}_{\lambda,\mu}(C)  = H^{\alpha,\beta}_{\lambda,\mu}(C\setminus\{k\}).
\]
\end{lem}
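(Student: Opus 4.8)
The plan is to compare the two determinants $H^{\alpha,\beta}_{\lambda,\mu}(C)$ and $H^{\alpha,\beta}_{\lambda,\mu}(C\setminus\{k\})$ directly, exploiting that they are built from the same matrix except in a single row. Since $k$ is the only index whose membership changes between $C$ and $C\setminus\{k\}$, the two matrices agree in every row $i\neq k$, so it suffices to show that their $k$th rows coincide as well. For $i=k$ the entries are $\ovh^{\alpha,\beta}_{\lambda,\mu}(k,j)=\chi(\alpha_j<\beta_k)\,h^{\alpha,\beta}_{\lambda,\mu}(k,j)$ in the first matrix (as $k\in C$) and $h^{\alpha,\beta}_{\lambda,\mu}(k,j)$ in the second (as $k\notin C\setminus\{k\}$). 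In every column $j$ with $\alpha_j<\beta_k$ these are equal, so the only columns to examine are those with $\alpha_j\ge\beta_k$, where the two entries are $0$ and $h^{\alpha,\beta}_{\lambda,\mu}(k,j)$ respectively.

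First I would record that in such a column $X_{(\alpha_j,\beta_k]}$ is the empty sum, hence $h^{\alpha,\beta}_{\lambda,\mu}(k,j)=h_{\lambda_k-k-\mu_j+j}[T_{k-1}-T_{j-1}]$. Next I would invoke the hypotheses $\alpha\in\RPar_n$ and $\alpha_k<\beta_k$: since $\alpha$ is weakly increasing, $\alpha_j\ge\beta_k>\alpha_k$ forces $j>k$. Then $\mu\in\Par_n$ gives $\mu_j\le\mu_k$, and combined with $\mu_k<\lambda_k$ this yields $\lambda_k-\mu_j>0$, i.e.\ $\lambda_k-k-\mu_j+j>j-k$. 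Applying Lemma~\ref{lem:h=0} (taking its index $i$ to be $k$, its index $j$ to be the present column $j$, and its degree to be $\lambda_k-k-\mu_j+j$; the required hypotheses $j\ge k$ and $\lambda_k-k-\mu_j+j>j-k$ are exactly what has just been verified) shows that $h_{\lambda_k-k-\mu_j+j}[T_{k-1}-T_{j-1}]=0$.

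Consequently $h^{\alpha,\beta}_{\lambda,\mu}(k,j)=0$ for every column with $\alpha_j\ge\beta_k$, so in those columns the $k$th-row entry of the second matrix also vanishes and matches the $0$ of the first. Hence the two matrices are literally identical entry by entry, and therefore $H^{\alpha,\beta}_{\lambda,\mu}(C)=H^{\alpha,\beta}_{\lambda,\mu}(C\setminus\{k\})$.

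The main obstacle is the final vanishing step: one must recognize that the entries where the indicator $\chi(\alpha_j<\beta_k)$ actually suppresses a term are precisely the entries that are already $0$ for degree reasons. This hinges on combining the monotonicity of $\alpha$ (to push the relevant columns into the range $j>k$) with the partition condition on $\mu$ and the inequality $\mu_k<\lambda_k$ in order to apply Lemma~\ref{lem:h=0}; omitting any of these hypotheses would break the argument, which is why they all appear in the statement.
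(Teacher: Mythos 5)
Your proof is correct and follows essentially the same route as the paper's: both reduce to showing that in any column $j$ with $\alpha_j\ge\beta_k$ the entry $h^{\alpha,\beta}_{\lambda,\mu}(k,j)$ already vanishes, using $\alpha\in\RPar_n$ to force $j>k$ and then Lemma~\ref{lem:h=0}. The only difference is cosmetic: the paper phrases the argument as a proof by contradiction, while you argue directly.
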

\begin{proof}
  It is sufficient to show that for all $1\le j\le n$,
  \begin{equation}
    \label{eq:4}
h^{\alpha,\beta}_{\lambda,\mu}(C;k,j) = h^{\alpha,\beta}_{\lambda,\mu}(C\setminus \{k\};k,j).    
  \end{equation}
  For a contradiction suppose that \eqref{eq:4} does not hold for some
  $1\le j\le n$. Since
  \begin{align*}
    h^{\alpha,\beta}_{\lambda,\mu}(C;k,j) &=
    \chi(\alpha_j<\beta_k) h^{\alpha,\beta}_{\lambda,\mu}(k,j),\\
    h^{\alpha,\beta}_{\lambda,\mu}(C\setminus\{k\};k,j) &=
     h^{\alpha,\beta}_{\lambda,\mu}(k,j),
  \end{align*}
  we must have $\chi(\alpha_j<\beta_k)=0$, or equivalently, $\beta_k\le
  \alpha_j$. Then by assumption we have $\alpha_{k}<\beta_k\le \alpha_j$, which
  implies $k< j$ (since $\alpha \in \RPar_n$). Thus
  \[
    \lambda_k-k-\mu_j+j\ge \lambda_k-k-\mu_k+j > j-k.
  \]
  By Lemma~\ref{lem:h=0},
  \[
    h^{\alpha,\beta}_{\lambda,\mu}(k,j) =
    h_{\lambda_k-k-\mu_j+j}[X_{(\alpha_j,\beta_k]}+T_{k-1}-T_{j-1}] =
    h_{\lambda_k-k-\mu_j+j}[T_{k-1}-T_{j-1}]=0.
  \]
  But this implies that both sides of \eqref{eq:4} are zero, a contradiction.
  Therefore \eqref{eq:4} is true for all $1\le j\le n$, which completes the
  proof.
\end{proof}

\begin{lem}\label{lem:H(1..r)=H}
  Let $\alpha\in\RPar_n$, $\beta\in\NN^n$, and $\mu,\lambda\in\Par_n$ with $\alpha<\beta$
  and $\mu<\lambda$. Then, for any $C\subseteq[n]$,
  \[
H_{\lambda,\mu}^{\alpha,\beta}(C) = H_{\lambda,\mu}^{\alpha,\beta}.
  \]
\end{lem}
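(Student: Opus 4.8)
The plan is to reduce $C$ to the empty set one index at a time, invoking Lemma~\ref{lem:H(C)=H(C,k)} at each step. The key observation is that the hypotheses of Lemma~\ref{lem:H(C)=H(C,k)} require an index $k$ with $\alpha_k<\beta_k$ and $\mu_k<\lambda_k$, and under the present assumptions $\alpha<\beta$ and $\mu<\lambda$ these two inequalities hold \emph{simultaneously for every} $k\in[n]$. Consequently any single index may be deleted from $C$ without altering the value of $H^{\alpha,\beta}_{\lambda,\mu}(C)$, and we are free to iterate this deletion until nothing remains.

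Concretely, I would argue by induction on $|C|$. If $C=\emptyset$, then $H^{\alpha,\beta}_{\lambda,\mu}(C)=H^{\alpha,\beta}_{\lambda,\mu}$ by definition, which settles the base case. For the inductive step, suppose $C\ne\emptyset$ and choose any $k\in C$. Since $\alpha_k<\beta_k$ and $\mu_k<\lambda_k$, Lemma~\ref{lem:H(C)=H(C,k)} applies and gives
\[
H^{\alpha,\beta}_{\lambda,\mu}(C)=H^{\alpha,\beta}_{\lambda,\mu}(C\setminus\{k\}).
\]
As $|C\setminus\{k\}|<|C|$, the induction hypothesis yields $H^{\alpha,\beta}_{\lambda,\mu}(C\setminus\{k\})=H^{\alpha,\beta}_{\lambda,\mu}$, and chaining the two equalities completes the step.

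There is essentially no obstacle beyond correctly applying the earlier result: all of the content—that toggling an index $k$ in or out of $C$ leaves the determinant unchanged when $\alpha_k<\beta_k$ and $\mu_k<\lambda_k$—was already established in Lemma~\ref{lem:H(C)=H(C,k)}. The only thing this argument contributes is the remark that the global hypotheses $\alpha<\beta$ and $\mu<\lambda$ certify that lemma's pointwise conditions at \emph{every} index, so the deletion can be carried out repeatedly down to $C=\emptyset$.
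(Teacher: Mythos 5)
Your proof is correct and follows exactly the paper's argument: the paper likewise observes that $\alpha<\beta$ and $\mu<\lambda$ guarantee the hypotheses of Lemma~\ref{lem:H(C)=H(C,k)} at every index and then removes the elements of $C$ iteratively. Your induction on $|C|$ is just a slightly more formal packaging of the same iteration.
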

\begin{proof}
  We claim that, if $k\in C$, then
  \[
H_{\lambda,\mu}^{\alpha,\beta}(C) = H_{\lambda,\mu}^{\alpha,\beta}(C\setminus\{k\}).
  \]
  By assumption we have $\alpha_k<\beta_k$ and $\mu_k<\lambda_k$. Thus the claim
  follows from Lemma~\ref{lem:H(C)=H(C,k)}. Applying the claim iteratively
  yields the desired result.
\end{proof}

\begin{lem}\label{lem:h=h+xh}
  Let $\alpha\in\RPar_n$, $\beta\in\NN^n$, $\mu\in\Par_n$, and
  $\lambda\in\NN^n$. Suppose that $k$ is an integer satisfying $\alpha_k<
  \beta_k$ and $\mu_k<\lambda_k$. Then
\[
  \ovH_{\lambda,\mu}^{\alpha,\beta} =
  \begin{cases}
  \ovH_{\lambda,\mu}^{\alpha,\beta-\epsilon_k}
  +x_{\beta_k} \ovH_{\lambda-\epsilon_k,\mu}^{\alpha,\beta},
  &\mbox{if $\alpha_k+1<\beta_k$},\\
  H_{\lambda,\mu}^{\alpha,\beta-\epsilon_k}([n]\setminus\{k\})+
  x_{\beta_k} \ovH_{\lambda-\epsilon_k,\mu}^{\alpha,\beta},
  &\mbox{if $\alpha_k+1=\beta_k$}.
  \end{cases}
\]
\end{lem}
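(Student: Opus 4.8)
The plan is to expand the determinant $\ovH_{\lambda,\mu}^{\alpha,\beta}=H_{\lambda,\mu}^{\alpha,\beta}([n])$ along its $k$th row. First I would record that the rows $i\ne k$ are identical in all three determinants appearing in the claim: since $\beta-\epsilon_k$ and $\lambda-\epsilon_k$ differ from $\beta$ and $\lambda$ only in coordinate $k$, and the parameter $\beta_i$ (resp.\ $\lambda_i$) enters only row $i$, every entry $\ovh_{\lambda,\mu}^{\alpha,\beta-\epsilon_k}(i,j)$, $\ovh_{\lambda-\epsilon_k,\mu}^{\alpha,\beta}(i,j)$, and (in the second case) $\ovh_{\lambda,\mu}^{\alpha,\beta-\epsilon_k}(i,j)$ with $i\ne k$ equals $\ovh_{\lambda,\mu}^{\alpha,\beta}(i,j)$. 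Consequently the determinant is a linear function of its $k$th row alone, and the whole identity reduces to a single statement about row-$k$ vectors.

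The engine of the reduction is Lemma~\ref{lem:pleth_h} applied with $Z=X_{(\alpha_j,\beta_k]}+T_{k-1}-T_{j-1}$ and $z=x_{\beta_k}$. When $\alpha_j<\beta_k$ one has $Z-z=X_{(\alpha_j,\beta_k-1]}+T_{k-1}-T_{j-1}$, and matching degrees ($m=\lambda_k-k-\mu_j+j$) gives the pointwise recurrence
\[
h_{\lambda,\mu}^{\alpha,\beta}(k,j)=h_{\lambda,\mu}^{\alpha,\beta-\epsilon_k}(k,j)+x_{\beta_k}\,h_{\lambda-\epsilon_k,\mu}^{\alpha,\beta}(k,j),\qquad \alpha_j<\beta_k.
\]
By multilinearity, proving the lemma amounts to showing that the determinant whose $k$th row is the residual vector $w$ — obtained by subtracting the two right-hand row-$k$ contributions from $\ovh_{\lambda,\mu}^{\alpha,\beta}(k,\cdot)$ — is zero. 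I would compute $w_j$ by splitting on the value of $\alpha_j$ relative to $\beta_k$ and $\beta_k-1$; the $\chi$-factors in the barred entries make $w_j$ vanish automatically on the interior $\alpha_j\le\beta_k-2$ (where the recurrence applies directly), and the only possibly nonzero entries sit at the boundary columns.

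I would then show that $w$ is in fact the zero vector, whence the determinant vanishes. Here the hypothesis $\alpha\in\RPar_n$ (so $\alpha_1\le\dots\le\alpha_n$) locates the boundary columns. In the case $\alpha_k+1<\beta_k$ the barred first term is correct and the residual is supported on columns with $\alpha_j=\beta_k-1$; since $\alpha_k<\beta_k-1$ these all satisfy $j>k$, the part $X_{(\alpha_j,\beta_k-1]}$ collapses to $0$, and $\mu_j\le\mu_k<\lambda_k$ forces the degree $\lambda_k-k-\mu_j+j$ to exceed $j-k$, so Lemma~\ref{lem:h=0} kills each such entry. In the case $\alpha_k+1=\beta_k$ the first term is taken unbarred in row $k$ precisely so that the recurrence closes at the boundary columns $\alpha_j=\beta_k-1$ (which now include $k$ and possibly columns $<k$ that Lemma~\ref{lem:h=0} cannot reach); the only surviving candidates are the columns with $\alpha_j\ge\beta_k$, which by monotonicity force $j>k$ and again vanish through the collapse of the $X$-part together with Lemma~\ref{lem:h=0}.

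The main obstacle is the $\chi$-factor bookkeeping, and in particular pinning down why the two cases demand different first terms. The honest content of the lemma is that the boundary column $\alpha_j=\beta_k-1$ behaves differently depending on whether it lies strictly to the right of $k$ or coincides with (or lies left of) $k$: in the former case it is annihilated by the $T$-only plethysm via Lemma~\ref{lem:h=0}, while in the latter it must instead be absorbed into the recurrence by keeping row $k$ unbarred in the first determinant. Getting this dichotomy exactly right — and verifying that no stray nonzero entry of $w$ survives at columns $j\le k$ — is where the care is needed; everything else is the routine plethystic recurrence and multilinearity.
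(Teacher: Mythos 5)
Your argument is correct and is essentially the argument of the paper: both rest on the pointwise recurrence from Lemma~\ref{lem:pleth_h} applied to row $k$, multilinearity of the determinant in that row, and the vanishing $h_m[T_{k-1}-T_{j-1}]=0$ from Lemma~\ref{lem:h=0} combined with the monotonicity of $\alpha$ (which forces the problematic columns to satisfy $j>k$) to dispose of the columns where the $\chi$-factors disagree. The only difference is organizational: the paper packages your boundary-column analysis into Lemma~\ref{lem:H(C)=H(C,k)} --- first unbarring row $k$ of $\ovH_{\lambda,\mu}^{\alpha,\beta}$, then splitting by the recurrence, then re-barring the first term when $\alpha_k+1<\beta_k$ --- whereas you verify directly that the residual row vector vanishes entry by entry.
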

\begin{proof}
We first claim that, for all $1\le j\le n$,
  \begin{equation}
    \label{eq:3}
  h_{\lambda,\mu}^{\alpha,\beta}(k,j)
  =h_{\lambda,\mu}^{\alpha,\beta-\epsilon_k}(k,j)
  +x_{\beta_k} \ovh_{\lambda-\epsilon_k,\mu}^{\alpha,\beta}(k,j).
  \end{equation}
  The claim is restated as
  \begin{multline*}
    h_{\lambda_k-k-\mu_j+j}[X_{(\alpha_j,\beta_k]}+T_{k-1}-T_{j-1}]
    =h_{\lambda_k-k-\mu_j+j}[X_{(\alpha_j,\beta_k-1]}+T_{k-1}-T_{j-1}]\\
    +x_{\beta_k} \chi(\alpha_j<\beta_k)
    h_{\lambda_k-k-\mu_j+j-1}[X_{(\alpha_j,\beta_k]}+T_{k-1}-T_{j-1}].
  \end{multline*}
  If $\alpha_j\ge \beta_k$, both sides of the above equation are equal to
  $h_{\lambda_k-k-\mu_j+j}[T_{k-1}-T_{j-1}]$. If $\alpha_j<\beta_k$, the
  equation follows from Lemma~\ref{lem:pleth_h} with
  $Z=X_{(\alpha_j,\beta_k]}+T_{k-1}-T_{j-1}$ and $z=x_{\beta_k}$. This
  establishes \eqref{eq:3}.

By Lemma~\ref{lem:H(C)=H(C,k)} and \eqref{eq:3},
\begin{equation}\label{eq:ovH=H+ovH}
  \ovH_{\lambda,\mu}^{\alpha,\beta} = H_{\lambda,\mu}^{\alpha,\beta}([n])
  = H_{\lambda,\mu}^{\alpha,\beta}([n]\setminus\{k\})
 = H_{\lambda,\mu}^{\alpha,\beta-\epsilon_k}([n]\setminus\{k\})
  +x_{\beta_k} \ovH_{\lambda-\epsilon_k,\mu}^{\alpha,\beta},
\end{equation}
where the last equality follows from the linearity of the determinant in its
$k$th row. This shows the lemma for the case $\alpha_k+1=\beta_k$. If
$\alpha_k+1<\beta_k$, Lemma~\ref{lem:H(C)=H(C,k)} gives
\[
  H_{\lambda,\mu}^{\alpha,\beta-\epsilon_k}([n]\setminus\{k\})
  = H_{\lambda,\mu}^{\alpha,\beta-\epsilon_k}([n])
  =\ovH_{\lambda,\mu}^{\alpha,\beta-\epsilon_k},
\]
which together with \eqref{eq:ovH=H+ovH} finishes the proof.
\end{proof}

\begin{lem}\label{lem:H=H+tH}
  Let $\alpha,\beta,\mu,\lambda\in\NN^n$, and $2\le k\le n$
  with $\beta_k=\beta_{k-1}$ and $\lambda_k=\lambda_{k-1}+1$. Then 
\[
  \ovH_{\lambda,\mu}^{\alpha,\beta} = t_{k-1}
  \ovH_{\lambda-\epsilon_k,\mu}^{\alpha,\beta}.
\]
\end{lem}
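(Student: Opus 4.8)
The plan is to read both sides as determinants of $n\times n$ matrices that differ in a single row. Write $M$ for the matrix $\left(\ovh_{\lambda,\mu}^{\alpha,\beta}(i,j)\right)_{1\le i,j\le n}$, so that $\det M=\ovH_{\lambda,\mu}^{\alpha,\beta}$ by Definition~\ref{defn:H}, and write $M'$ for the analogous matrix with $\lambda$ replaced by $\lambda-\epsilon_k$, so that $\det M'=\ovH_{\lambda-\epsilon_k,\mu}^{\alpha,\beta}$. Since $\lambda_i$ enters only the $i$th row, the matrices $M$ and $M'$ agree in every row except row $k$. The goal is therefore to express row $k$ of $M$ as a combination of row $k-1$ of $M$ and row $k$ of $M'$, and then finish by multilinearity of the determinant.

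First I would compare the $(k-1,j)$ and $(k,j)$ entries of $M$ using the hypotheses $\beta_k=\beta_{k-1}$ and $\lambda_k=\lambda_{k-1}+1$. These give $\chi(\alpha_j<\beta_k)=\chi(\alpha_j<\beta_{k-1})$, $X_{(\alpha_j,\beta_k]}=X_{(\alpha_j,\beta_{k-1}]}$, and $\lambda_k-k=\lambda_{k-1}-(k-1)$, so the two entries carry the same degree $d:=\lambda_k-k-\mu_j+j$ and the same $x$-part; the only discrepancy is that row $k$ has $T_{k-1}=T_{k-2}+t_{k-1}$ where row $k-1$ has $T_{k-2}$. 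Applying Lemma~\ref{lem:pleth_h} with $Z=X_{(\alpha_j,\beta_{k-1}]}+T_{k-1}-T_{j-1}$ and $z=t_{k-1}$ peels off this extra $t_{k-1}$ and yields, after multiplying by the common factor $\chi(\alpha_j<\beta_{k-1})$, the entrywise identity
\[
M_{k,j}=M_{k-1,j}+t_{k-1}\,M'_{k,j}\qquad(1\le j\le n),
\]
where I would check that the second term is exactly $t_{k-1}$ times the $(k,j)$ entry of $M'$, using $(\lambda-\epsilon_k)_k-k-\mu_j+j=d-1$ together with the identifications of the $x$-interval and the $\chi$-prefactor above.

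With this identity in hand, the conclusion is pure linear algebra. Substituting the expression for row $k$ of $M$ and using multilinearity of $\det$ in row $k$ splits $\det M$ into two determinants: in the first, row $k$ has been replaced by a copy of row $k-1$, so the matrix has two equal rows and its determinant vanishes; in the second, row $k$ of $M$ is replaced by row $k$ of $M'$ while all other rows are unchanged, and since $M$ and $M'$ already agree off row $k$ this second determinant is precisely $\det M'$. Hence $\det M=t_{k-1}\det M'$, which is the claimed equality $\ovH_{\lambda,\mu}^{\alpha,\beta}=t_{k-1}\ovH_{\lambda-\epsilon_k,\mu}^{\alpha,\beta}$.

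The only place requiring genuine care is the bookkeeping in the entrywise identity: one must verify that the degree shift ($d$ versus $d-1$), the interval equality $X_{(\alpha_j,\beta_k]}=X_{(\alpha_j,\beta_{k-1}]}$, and the $\chi$-prefactors all line up so that the residual term produced by Lemma~\ref{lem:pleth_h} matches $M'_{k,j}$ on the nose. After that the multilinearity step is routine and needs no hypotheses beyond $2\le k\le n$, which is exactly what makes $t_{k-1}$ and row $k-1$ meaningful.
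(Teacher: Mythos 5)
Your proof is correct and follows essentially the same route as the paper: the entrywise identity $M_{k,j}=M_{k-1,j}+t_{k-1}M'_{k,j}$ you derive via Lemma~\ref{lem:pleth_h} (with $z=t_{k-1}$, after matching degrees, $x$-intervals, and $\chi$-prefactors using $\beta_k=\beta_{k-1}$ and $\lambda_k=\lambda_{k-1}+1$) is exactly the paper's equation \eqref{eq:h=h+th}, and your multilinearity argument is just the paper's row operation of subtracting row $k-1$ from row $k$ phrased differently. No gaps.
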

\begin{proof}
  We claim that, for all $1\le j\le n$,
  \begin{equation}\label{eq:h=h+th}
  \ovh_{\lambda,\mu}^{\alpha,\beta}(k,j)
  =\ovh_{\lambda,\mu}^{\alpha,\beta}(k-1,j)
  +t_{k-1} \ovh_{\lambda-\epsilon_k,\mu}^{\alpha,\beta}(k,j).
\end{equation}
To prove the claim, since $\chi(\alpha_j<\beta_k)=\chi(\alpha_j<\beta_{k-1})$,
it suffices to show that
  \begin{multline*}
    h_{\lambda_k-k-\mu_j+j}[X_{(\alpha_j,\beta_k]}+T_{k-1}-T_{j-1}]
    =h_{\lambda_{k-1}-(k-1)-\mu_j+j}[X_{(\alpha_j,\beta_{k-1}]}+T_{k-2}-T_{j-1}]\\
    +t_{k-1} h_{\lambda_k-k-\mu_j+j-1}[X_{(\alpha_j,\beta_k]}+T_{k-1}-T_{j-1}],
  \end{multline*}
  which is, by assumption, the same as
  \begin{multline*}
    h_{\lambda_k-k-\mu_j+j}[X_{(\alpha_j,\beta_k]}+T_{k-1}-T_{j-1}]
    =h_{\lambda_{k}-k-\mu_j+j}[X_{(\alpha_j,\beta_{k}]}+T_{k-2}-T_{j-1}]\\
    +t_{k-1} h_{\lambda_k-k-\mu_j+j-1}[X_{(\alpha_j,\beta_k]}+T_{k-1}-T_{j-1}].
  \end{multline*}
  This follows from Lemma~\ref{lem:pleth_h} with
  $Z=X_{(\alpha_j,\beta_k]}+T_{k-1}-T_{j-1}$ and $z=t_{k-1}$.

  Using \eqref{eq:h=h+th} and subtracting the $(k-1)$st row from the $k$th row
  of the matrix for the determinant $\ovH_{\lambda,\mu}^{\alpha,\beta}$ we
  obtain the lemma.
\end{proof}

\subsection{Proof of Theorem~\ref{thm:row_flag}}

We first show that $\R^{\alpha,\beta}_{\lambda,\mu}(R_0)$ and
$\HH^{\alpha,\beta}_{\lambda,\mu}(R_0)$ satisfy the same recurrence relation
under certain conditions.

\begin{prop}\label{prop:H rec RPP R0}
  Let $\alpha,\beta\in\RPar_n$, $\lambda,\mu\in\Par_n$ with $\alpha<\beta$ and
  $\mu<\lambda$. Fix $(r,c)\in\lm$ and $R_0\in
  \RPP^{\row(\alpha,\beta)}_{\rho}$, where $\rho$ is the set of cells
  $(i,j)\in\lm$ with $(i,j)\prec(r,c)$. Let
  $\wb=(\wb_1,\dots,\wb_n)=B(R_0,\beta)$. Then
  \begin{align}
    \label{eq:rec M2}
    \R^{\alpha,\beta}_{\lambda,\mu}(R_0) 
    &= \sum_{k=a}^{\wb_r} \R^{\alpha,\beta}_{\lambda,\mu}(R_0\cup\{k\}),\\
    \label{eq:rec H}
    \HH^{\alpha,\beta}_{\lambda,\mu}(R_0)
    &= \sum_{k=a}^{\wb_r} \HH^{\alpha,\beta}_{\lambda,\mu}(R_0\cup\{k\}),
  \end{align}
  where $R_0\cup\{k\}$ is the RPP obtained from $R_0$ by adding the cell $(r,c)$
  with entry $k$, and
  \[
a = \begin{cases}
  \wb_{r-1}, &\mbox{if $(r-1,c)\in\rho$ and $\wb_{r-1}\ge \alpha_r+1$},\\
 \alpha_r+1, &\mbox{otherwise}.
\end{cases}
\]
\end{prop}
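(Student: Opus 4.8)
The plan is to follow the template of the proof of Proposition~\ref{prop:rec RPP R0}, replacing the elementary-symmetric objects by their complete-homogeneous counterparts from Definition~\ref{defn:H}. The first identity \eqref{eq:rec M2} is immediate from the definition of $\R^{\alpha,\beta}_{\lambda,\mu}(R_0)$: any RPP in $\RPP^{\row(\alpha,\beta)}_{\lm}$ restricting to $R_0$ on $\rho$ is obtained by placing a single integer $k$ in the cell $(r,c)$, and column and row monotonicity together with the flag bounds force $a\le k\le \wb_r$, where $a$ is exactly the larger of the row lower bound $\alpha_r+1$ and the entry $\wb_{r-1}=R_0(r-1,c)$ of the cell directly above (when it lies in $\rho$), and $\wb_r$ is the induced upper bound of Definition~\ref{defn:R0}.

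For the determinant identity \eqref{eq:rec H}, I would first unfold the left-hand side via \eqref{eq:defHH} as $\HH^{\alpha,\beta}_{\lambda,\mu}(R_0)=\wt(R_0)\,\ovH^{\alpha,\wb}_{\lambda-\rho,\mu}$, and then expand $\ovH^{\alpha,\wb}_{\lambda-\rho,\mu}$ by repeatedly applying Lemma~\ref{lem:h=h+xh} in its $k=r$ instance. Each application lowers the $r$th flag entry by one and peels off a summand $x_v\,\ovH^{\alpha,\wb-(\wb_r-v)\epsilon_r}_{(\lambda-\rho)-\epsilon_r,\mu}$ for $v=\wb_r,\wb_r-1,\dots$, leaving a residual $\ovH$ with the same shape $\lambda-\rho$ but a decremented flag. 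The hypotheses $\alpha_r<(\wb-i\epsilon_r)_r$ and $\mu_r<(\lambda-\rho)_r$ required at each stage should be verified by an $h$-analog of Lemma~\ref{lem:s-1}; here $(\lambda-\rho)_r=c>\mu_r$, and when $(r-1,c)\in\rho$ one has $(\lambda-\rho)_r=(\lambda-\rho)_{r-1}+1$, which is precisely the shape condition feeding Lemma~\ref{lem:H=H+tH}.

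The crux is the boundary term reached once the $r$th flag entry has dropped to $a-1$. In Case~A, where $a=\wb_{r-1}$ and $(r-1,c)\in\rho$, the residual determinant has its $r$th and $(r-1)$st flag entries both equal to $\wb_{r-1}$ while $(\lambda-\rho)_r=(\lambda-\rho)_{r-1}+1$, so Lemma~\ref{lem:H=H+tH} converts it into the missing $v=\wb_{r-1}$ summand with coefficient $t_{r-1}$. In Case~B, where $a=\alpha_r+1$, the final application of Lemma~\ref{lem:h=h+xh} (now in its $\alpha_r+1=\beta_r$ branch) leaves a determinant $H^{\alpha,\kappa}_{\lambda-\rho,\mu}([n]\setminus\{r\})$ with $\kappa_r=\alpha_r$, which must be shown to vanish by an $h$-analog of Lemma~\ref{lem:s}, exhibiting a zero block and invoking Lemma~\ref{lem:det=0} exactly as in the mixed $x$/$t$ computation of that lemma. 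Collecting the peeled terms and matching coefficients through the readily checked weight identities $x_v\,\wt(R_0)=\wt(R_0\cup\{v\})$ for $v>\wb_{r-1}$ and $t_{r-1}\,\wt(R_0)=\wt(R_0\cup\{\wb_{r-1}\})$ (the latter recording the new vertical adjacency created when the entry $\wb_{r-1}$ of the cell above is repeated) then produces $\sum_{k=a}^{\wb_r}\HH^{\alpha,\beta}_{\lambda,\mu}(R_0\cup\{k\})$, which is \eqref{eq:rec H}.

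The main obstacle I anticipate is the boundary analysis rather than the iteration itself. Unlike the unified recurrence of Lemma~\ref{lem:rec e} in the $e$-case, here the $x$-recurrence (Lemma~\ref{lem:h=h+xh}) and the $t$-recurrence (Lemma~\ref{lem:H=H+tH}) are genuinely separate statements with different hypotheses, so one must switch between them at precisely the value $k=a$ and dispose of the leftover determinant of Case~B by hand. Establishing the correct $h$-analogs of the technical Lemmas~\ref{lem:s-1} and~\ref{lem:s}---in particular the determinantal vanishing after the flag has been pushed all the way down to $\alpha_r$, \emph{without} being able to assume the modified flag still lies in $\RPar_n$ so that Lemma~\ref{lem:HC=0 mu<la} does not apply directly---is where the real work will lie.
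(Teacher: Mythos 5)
Your proposal follows essentially the same route as the paper's proof: peel off the $x_{\wb_r-i}$ terms by iterating Lemma~\ref{lem:h=h+xh}, then either convert the residual determinant into the $t_{r-1}$ term via Lemma~\ref{lem:H=H+tH} (when $(r-1,c)\in\rho$ and $\wb_{r-1}\ge\alpha_r+1$) or kill it via the vanishing lemma you anticipate (which is the paper's Lemma~\ref{lem:ss}, proved exactly by the zero-block argument you describe), and finally match weights using $x_v\,\wt(R_0)=\wt(R_0\cup\{v\})$ and $t_{r-1}\,\wt(R_0)=\wt(R_0\cup\{\wb_{r-1}\})$. The outline is correct and correctly locates the remaining technical work in the two boundary lemmas.
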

\begin{proof}
  The first identity \eqref{eq:rec M2} is immediate from the definition of
  $\R^{\alpha,\beta}_{\lambda,\mu}(R_0)$. For the second identity \eqref{eq:rec
    H}, let $C=C(\rho)=\{1\le i\le n: \rho_i>0\}$ and
    \[
s = \begin{cases}
  \wb_r-\wb_{r-1}+1, & \mbox{if $(r-1,c)\in\rho$
  and $\wb_{r-1}\ge\alpha_r+1$} ,\\
\wb_r-\alpha_r, &\mbox{otherwise} . 
\end{cases}
  \]
  Note that $\beta \in \RPar_n \subseteq \QPar_n$ and
  $\beta_{r-1} \leq \beta_r$; thus, \eqref{eq:s>1} yields
  $s \ge 1$.
  Furthermore, \eqref{eq:8} yields
  $\wb_r - s \ge \alpha_r \ge 0$, thus
  $\wb - s \epsilon_r \in \NN^n$. Finally,
  $\lambda - \rho \in \NN^n$.
  We consider the two cases in the definition of $s$.

  \textbf{Case 1:} $(r-1,c)\in\rho$ and $\wb_{r-1}\ge\alpha_r+1$. Then
  $s=\wb_r-\wb_{r-1}+1$. Therefore, for $0\le i\le s-2$, we have
\[
(\wb-i\epsilon_r)_r =\wb_r-i > \wb_r -(s-1) = \wb_{r-1}\ge \alpha_r +1.
\]
Note that since $(r,c)\in (\lm)-\rho$, we have $(\lambda-\rho)_r>\mu_r$.
Thus we can apply Lemma~\ref{lem:h=h+xh} repeatedly to
$\ovH^{\alpha,\wb-i\epsilon_r}_{\lambda-\rho,\mu}$, for $0\le i\le s-2$:
\begin{align*}
  \ovH^{\alpha,\wb}_{\lambda-\rho,\mu} &=
  \ovH^{\alpha,\wb-\epsilon_r}_{\lambda-\rho,\mu}
  + x_{\wb_r}\ovH^{\alpha,\wb}_{\lambda-\rho-\epsilon_r,\mu},\\
  \ovH^{\alpha,\wb-\epsilon_r}_{\lambda-\rho,\mu} &=
  \ovH^{\alpha,\wb-2\epsilon_r}_{\lambda-\rho,\mu}
  + x_{\wb_r-1}\ovH^{\alpha,\wb-\epsilon_r}_{\lambda-\rho-\epsilon_r,\mu},\\
  \vdots \\
  \ovH^{\alpha,\wb-(s-2)\epsilon_r}_{\lambda-\rho,\mu} &=
  \ovH^{\alpha,\wb-(s-1)\epsilon_r}_{\lambda-\rho,\mu}
  + x_{\wb_r-(s-2)}\ovH^{\alpha,\wb-(s-2)\epsilon_r}_{\lambda-\rho-\epsilon_r,\mu}.
\end{align*}
Combining the above equations gives
\begin{equation}\label{eq:5.11}
  \ovH^{\alpha,\wb}_{\lambda-\rho,\mu} =
  \ovH^{\alpha,\wb-(s-1)\epsilon_r}_{\lambda-\rho,\mu}
  +\sum_{i=0}^{s-2} x_{\wb_r-i}
  \ovH^{\alpha,\wb-i\epsilon_r}_{\lambda-\rho-\epsilon_r,\mu}.
\end{equation}
Note that if $s=1$, then \eqref{eq:5.11} is trivial. Since $(r-1,c)\in\rho$, by
the construction of $\rho$ we have $(\lambda-\rho)_r = (\lambda-\rho)_{r-1}+1$.
Since
\[
(\wb-(s-1)\epsilon_r)_r = \wb_r-(s-1)= \wb_{r-1} = (\wb-(s-1)\epsilon_r)_{r-1},
\]
Lemma~\ref{lem:H=H+tH} gives
\begin{equation}
  \label{eq:5}
  \ovH^{\alpha,\wb-(s-1)\epsilon_r}_{\lambda-\rho,\mu}  = t_{r-1}
  \ovH^{\alpha,\wb-(s-1)\epsilon_r}_{\lambda-\rho-\epsilon_r,\mu}.
\end{equation}
By \eqref{eq:5.11} and \eqref{eq:5} we obtain 
\begin{equation}
  \label{eq:6}
  \ovH^{\alpha,\wb}_{\lambda-\rho,\mu} = t_{r-1}
  \ovH^{\alpha,\wb-(s-1)\epsilon_r}_{\lambda-\rho-\epsilon_r,\mu}
  +\sum_{i=0}^{s-2} x_{\wb_r-i}
  \ovH^{\alpha,\wb-i\epsilon_r}_{\lambda-\rho-\epsilon_r,\mu}.
\end{equation}

Since $\wb=B(R_0,\beta)$,
we can rewrite \eqref{eq:defHH} as
\[
  \HH_{\lambda,\mu}^{\alpha,\beta}(R_0)
  = \wt(R_0) \ovH_{\lambda-\rho,\mu}^{\alpha,\wb}.
\]
Since 
\begin{align*}
t_{r-1}\wt(R_0) &= \wt(R_0\cup\{\wb_{r-1}\}) = \wt(R_0\cup\{\wb_{r}-(s-1)\}),\\
  x_{\wb_r-i} \wt(R_0)&=\wt(R_0\cup\{\wb_r-i\})
                        \qquad \mbox{for $0 \le i \le s-2$,}
\end{align*}
multiplying both sides of
\eqref{eq:6} by $\wt(R_0)$ gives
\[
  \HH^{\alpha,\beta}_{\lambda,\mu}(R_0)
    = \sum_{i=0}^{s-1} \wt(R_0\cup\{\wb_r-i\}) \ovH_{\lambda-\rho-\epsilon_r,\mu}^{\alpha,\wb-i\epsilon_r}
    = \sum_{i=0}^{s-1} \HH^{\alpha,\beta}_{\lambda,\mu}(R_0\cup\{\wb_r-i\}),
\]  
where the last equality follows from $\wb-i\epsilon_r =
B(R_0\cup\{\wb_r-i\},\beta)$. The above equation is the same as \eqref{eq:rec
  H}.

\textbf{Case 2:} $(r-1,c)\not\in\rho$ or $\wb_{r-1}<\alpha_r+1$. Then
$s=\wb_r-\alpha_r$. As in Case 1 we have $(\lambda-\rho)_r>\mu_r$, and, for
$0\le i\le s-1$,
  \[
(\wb-i\epsilon_r)_r =\wb_r-i \ge \wb_r -(s-1) =  \alpha_r +1,
\]
where the equality holds if and only if $i=s-1$. Thus we can apply
Lemma~\ref{lem:h=h+xh} repeatedly to
$\ovH^{\alpha,\wb-i\epsilon_r}_{\lambda-\rho,\mu}$, for $0\le i\le s-1$:
\begin{align*}
  \ovH^{\alpha,\wb}_{\lambda-\rho,\mu} &=
  \ovH^{\alpha,\wb-\epsilon_r}_{\lambda-\rho,\mu}
  + x_{\wb_r}\ovH^{\alpha,\wb}_{\lambda-\rho-\epsilon_r,\mu},\\
  \ovH^{\alpha,\wb-\epsilon_r}_{\lambda-\rho,\mu} &=
  \ovH^{\alpha,\wb-2\epsilon_r}_{\lambda-\rho,\mu}
  + x_{\wb_r-1}\ovH^{\alpha,\wb-\epsilon_r}_{\lambda-\rho-\epsilon_r,\mu},\\
  \vdots \\
  \ovH^{\alpha,\wb-(s-2)\epsilon_r}_{\lambda-\rho,\mu} &=
  \ovH^{\alpha,\wb-(s-1)\epsilon_r}_{\lambda-\rho,\mu}
  + x_{\wb_r-(s-2)}\ovH^{\alpha,\wb-(s-2)\epsilon_r}_{\lambda-\rho-\epsilon_r,\mu},\\
  \ovH^{\alpha,\wb-(s-1)\epsilon_r}_{\lambda-\rho,\mu} &=
  H^{\alpha,\wb-s\epsilon_r}_{\lambda-\rho,\mu}([n]\setminus\{r\})
  + x_{\wb_r-(s-1)}\ovH^{\alpha,\wb-(s-1)\epsilon_r}_{\lambda-\rho-\epsilon_r,\mu}.
\end{align*}
Lemma~\ref{lem:ss} below shows
$H^{\alpha,\wb-s\epsilon_r}_{\lambda-\rho,\mu}([n]\setminus\{r\})= 0$. Thus,
combining the above equations, we obtain
\[
  \ovH^{\alpha,\wb}_{\lambda-\rho,\mu} =
  \sum_{i=0}^{s-1} x_{\wb_r-i}
  \ovH^{\alpha,\wb-i\epsilon_r}_{\lambda-\rho-\epsilon_r,\mu}.
\]

Observe that, for $0\le i\le s-1$, we have $x_{\wb_r-i}
\wt(R_0)=\wt(R_0\cup\{\wb_r-i\})$ (because we have $(r-1,c) \notin \rho$ or
$\wb_{r-1} < \alpha_r + 1 = \wb_r - (s-1) \le \wb_r - i$) and $\wb-i\epsilon_r =
B(R_0\cup\{\wb_r-i\},\beta)$. Thus, similarly to the argument in Case 1,
multiplying both sides of the above equation by $\wt(R_0)$ gives
\[
  \HH^{\alpha,\beta}_{\lambda,\mu}(R_0)
    = \sum_{i=0}^{s-1} \HH^{\alpha,\beta}_{\lambda,\mu}(R_0\cup\{\wb_r-i\}),
\]  
which is the same as \eqref{eq:rec H}.
This completes the proof.
\end{proof}

We now prove a statement used in the proof of Proposition~\ref{prop:H rec RPP
  R0}.

\begin{lem}\label{lem:ss}
  Following the notation in Proposition~\ref{prop:H rec RPP R0}, suppose
  $(r-1,c)\notin\rho$ or $\wb_{r-1}<\alpha_r+1$, and let $s=\wb_r-\alpha_r$.
  Then we have
  \begin{equation}\label{eq:h0}
H^{\alpha,\wb-s\epsilon_r}_{\lambda-\rho,\mu}([n]\setminus\{r\})= 0.
  \end{equation}
\end{lem}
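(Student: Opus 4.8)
The plan is to follow the strategy of the proof of Lemma~\ref{lem:s} in its Case~2, since the hypotheses of Lemma~\ref{lem:ss} are precisely those of that case. First I would set $\kappa=\wb-s\epsilon_r$ and $\sigma=\lambda-\rho$, so that $\kappa_r=\wb_r-s=\alpha_r$ and $\kappa_i=\wb_i$ for $i\ne r$. By \eqref{eq:8a} we have $\wb_r\ge\alpha_r+1$, hence $s=\wb_r-\alpha_r\ge1$ and $\kappa\in\NN^n$. Let $d$ be the integer with $(d,c)\in\lm$ and $(d-1,c)\notin\lm$, so that $1\le d\le r$ and $(d,c)$ is the topmost cell of column $c$, exactly as in Figure~\ref{fig:d}. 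The goal is to show that the $n\times n$ determinant $H^{\alpha,\kappa}_{\sigma,\mu}([n]\setminus\{r\})$ vanishes.

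The first step is to block-triangularize this determinant along the index $d$. For $d\le i\le n$ and $1\le j\le d-1$, the definitions of $\rho$ and $d$ give $\sigma_i=(\lambda-\rho)_i\le c\le\mu_j$ (the inequality $c\le\mu_j$ holds because $(j,c)\in\mu$), while $j<d\le i$. Hence $\sigma_i-i-\mu_j+j=(\sigma_i-\mu_j)+(j-i)<0$, so the $(i,j)$ entry is a multiple of $h_{\sigma_i-i-\mu_j+j}[\,\cdots\,]=0$. By Lemma~\ref{lem:det=detdet} the determinant then factors as the top-left $(d-1)\times(d-1)$ block times the bottom-right block indexed by $d\le i,j\le n$, so it suffices to show the latter block determinant is $0$.

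For the bottom-right block, Lemma~\ref{lem:det=0} reduces the problem to proving that its entries vanish for all $d\le i\le r$ and $r\le j\le n$ (the upper-right zero region of the block re-indexed from $d$). I would split this into two sub-cases. If $i=r$, then $r\notin[n]\setminus\{r\}$, so the entry is the unbarred $h^{\alpha,\kappa}_{\sigma,\mu}(r,j)$; since $\kappa_r=\alpha_r\le\alpha_j$ (using $\alpha\in\RPar_n$ and $r\le j$) the sum $X_{(\alpha_j,\kappa_r]}$ is empty, leaving $h_{\sigma_r-r-\mu_j+j}[T_{r-1}-T_{j-1}]$, and because $\sigma_r=(\lambda-\rho)_r=c>\mu_r\ge\mu_j$ the index exceeds $j-r$, so Lemma~\ref{lem:h=0} forces this to be $0$. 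If instead $d\le i\le r-1$, then either $(r-1,c)\notin\rho$ (whence $r=d$ and this range is empty) or $(r-1,c)\in\rho$ with $\wb_{r-1}<\alpha_r+1$; in the latter case $(i,c)$ is the leftmost cell of row $i$ of $\rho$, so $\kappa_i=\wb_i=R_0(i,c)\le R_0(r-1,c)=\wb_{r-1}<\alpha_r+1\le\alpha_j+1$, giving $\kappa_i\le\alpha_j$. Since $i\in[n]\setminus\{r\}$, the entry is the barred $\ovh^{\alpha,\kappa}_{\sigma,\mu}(i,j)=\chi(\alpha_j<\kappa_i)\,h^{\alpha,\kappa}_{\sigma,\mu}(i,j)$, and $\chi(\alpha_j<\kappa_i)=0$ makes it vanish at once.

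The main obstacle is organizing the two vanishing regions correctly — first the strictly-below-diagonal block producing the factorization, then the upper-right block of the lower factor — and matching each entry to the correct branch ($h$ versus $\ovh$) of Definition~\ref{defn:H}. The only genuinely new feature compared with Lemma~\ref{lem:s} is that the barred entries carry the indicator $\chi(\alpha_j<\beta_i)$; this in fact simplifies the analysis, since in the sub-case $d\le i\le r-1$ the bound $\kappa_i\le\alpha_j$ annihilates the entry directly through the indicator, with no plethystic estimate of the number of variables required.
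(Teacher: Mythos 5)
Your proposal is correct and follows essentially the same route as the paper's proof: the same block factorization at the index $d$ via Lemma~\ref{lem:det=detdet}, the same application of Lemma~\ref{lem:det=0} to the lower-right block, and the same two sub-cases ($i=r$ handled by emptiness of $X_{(\alpha_j,\kappa_r]}$ plus Lemma~\ref{lem:h=0}, and $d\le i\le r-1$ handled by the indicator $\chi(\alpha_j<\kappa_i)=0$). Your closing observation that the indicator makes this case simpler than the analogous Lemma~\ref{lem:s} matches the paper's treatment exactly.
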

\begin{proof}
  We proceed similarly as in the proof of Lemma~\ref{lem:s}. Let $d$ be the
  integer such that $(d,c)\in\lm$ and $(d-1,c)\notin\lm$, see
  Figure~\ref{fig:d}. Then $1\le d\le r$. Let $\kappa=\wb-s\epsilon_r$ and
  $\sigma =\lambda-\rho$, so that
\[
H^{\alpha,\wb-s\epsilon_r}_{\lambda-\rho,\mu}([n]\setminus\{r\})
  = H_{\sigma,\mu}^{\alpha,\kappa}([n]\setminus\{r\})
=  \det(h_{\sigma,\mu}^{\alpha,\kappa}([n]\setminus\{r\};i,j))_{1\le i,j\le n}.
\]
By the same argument as in the proof of \eqref{eq:E=dd} we have
\[
  H_{\sigma,\mu}^{\alpha,\kappa}([n]\setminus\{r\})
  =  \det(h_{\sigma,\mu}^{\alpha,\kappa}([n]\setminus\{r\};i,j))_{1\le i,j\le d-1}
  \det(h_{\sigma,\mu}^{\alpha,\kappa}([n]\setminus\{r\};i,j))_{d\le i,j\le n}.
\]
Therefore, it suffices to show that
\[
\det(h_{\sigma,\mu}^{\alpha,\kappa}([n]\setminus\{r\};i,j))_{d\le i,j\le n} = 0.  
\]
By Lemma~\ref{lem:det=0}, in order to show the above equation it is enough to
show that, for all $d\le i\le r$ and $r\le j\le n$,
\begin{equation}
  \label{eq:1}
 h_{\sigma,\mu}^{\alpha,\kappa}([n]\setminus\{r\};i,j) = 0. 
\end{equation}
By Definition~\ref{defn:H} we have
    \[
      h^{\alpha,\kappa}_{\sigma,\mu}([n]\setminus\{r\};i,j)=
      \begin{cases}
      h_{\sigma_i-i-\mu_j+j}[X_{(\alpha_j,\kappa_i]} + T_{i-1}-T_{j-1}],
      & \mbox{if $i= r$},\\
      \chi(\alpha_j<\kappa_i) h_{\sigma_i-i-\mu_j+j}[X_{(\alpha_j,\kappa_i]} + T_{i-1}-T_{j-1}],
      & \mbox{if $i\ne r$}.
      \end{cases}
  \]

  Suppose $i=r$ and $r\le j\le n$. Then the above equation becomes
\[
      h^{\alpha,\kappa}_{\sigma,\mu}([n]\setminus\{r\};r,j)=
      h_{\sigma_r-r-\mu_j+j}[X_{(\alpha_j,\kappa_r]} + T_{r-1}-T_{j-1}].
\]
Since $\kappa_r = (\wb-s\epsilon_r)_r = \wb_r-s = \alpha_r \le \alpha_j$,
this equation can be further simplified to
\begin{equation}
  \label{eq:2}
      h^{\alpha,\kappa}_{\sigma,\mu}([n]\setminus\{r\};r,j)=
      h_{\sigma_r-r-\mu_j+j}[T_{r-1}-T_{j-1}].
\end{equation}
Since $(r,c)\in\rho$, we have
\[
\sigma_r=(\lambda-\rho)_r>\mu_r\ge\mu_j,
\]
which implies
\[
\sigma_r-r-\mu_j+j > j-r.
\]
Thus, Lemma~\ref{lem:h=0} gives
\begin{equation}
  \label{eq:9}
      h_{\sigma_r-r-\mu_j+j}[T_{r-1}-T_{j-1}]=0.
\end{equation}
By \eqref{eq:2} and \eqref{eq:9} we obtain \eqref{eq:1} for $i=r$ and $r\le j\le n$.

It remains to prove \eqref{eq:1} for $d\le i\le r-1$ and $r\le j\le n$. 
In this case,
\[
      h^{\alpha,\kappa}_{\sigma,\mu}([n]\setminus\{r\};i,j)=
      \chi(\alpha_j<\kappa_i) h_{\sigma_i-i-\mu_j+j}[X_{(\alpha_j,\kappa_i]} + T_{i-1}-T_{j-1}].
\]
Thus it suffices to show that $\kappa_i\le\alpha_j$. Since $d\le r-1<r$, we have
$(r-1,c)\in \rho$. Then by assumption in this lemma, $\wb_{r-1}<\alpha_r+1$.
Since $d\le i\le r-1$, we have $(i,c)\in\rho$;
in view of $(i,c-1)\notin\rho$, this leads to $\wb_i = R_0(i,c)$, and
similarly $\wb_{r-1} = R_0(r-1,c)$. Therefore
\[
\kappa_i = (\wb-s\epsilon_r)_i = \wb_i = R_0(i,c) \le R_0(r-1,c) = \wb_{r-1}<
\alpha_r+1 \le \alpha_j+1
\]
(since $\alpha \in \RPar_n$),
which shows $\kappa_i\le\alpha_j$, as desired.
\end{proof}

The fact that $\R^{\alpha,\beta}_{\lambda,\mu}(R_0)$ and
$\HH^{\alpha,\beta}_{\lambda,\mu}(R_0)$ satisfy the same recurrence relation can
be used to show that they are equal.

\begin{prop}\label{prop:main h}
  Let $\alpha,\beta\in\RPar_n$, $\lambda,\mu\in\Par_n$ with $\alpha<\beta$ and
  $\mu<\lambda$. Let $\rho=(\lm)^{(m)}$ for some $0\le m\le|\lm|$ and let
  $R_0\in \RPP^{\row(\alpha,\beta)}_{\rho}$. Then
\[
\HH^{\alpha,\beta}_{\lambda,\mu}(R_0) = \R^{\alpha,\beta}_{\lambda,\mu}(R_0). 
\]
\end{prop}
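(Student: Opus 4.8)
The plan is to mirror the proof of Proposition~\ref{prop:main e}, proceeding by induction on $N = |\lm| - |\rho|$ and feeding in the parallel recurrences established in Proposition~\ref{prop:H rec RPP R0}. For the base case $N = 0$, we have $\rho = \lm$, so $R_0$ is itself the only RPP of shape $\lm$ restricting to $R_0$; hence $\R^{\alpha,\beta}_{\lambda,\mu}(R_0) = \wt(R_0)$. On the other side, $\rho = \lm$ gives $\lambda - \rho = \mu$ as elements of $\NN^n$, so by \eqref{eq:defHH} we have $\HH^{\alpha,\beta}_{\lambda,\mu}(R_0) = \wt(R_0)\,\ovH^{\alpha,B(R_0,\beta)}_{\mu,\mu}$. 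To conclude $\HH^{\alpha,\beta}_{\lambda,\mu}(R_0) = \wt(R_0)$, I would apply Lemma~\ref{lem:initial H}, whose hypothesis $\alpha < B(R_0,\beta)$ must be checked. Writing $\wb = B(R_0,\beta)$: for $i \in C(\rho)$ the entry $\wb_i = R_0(i,\lambda_i-\rho_i+1) \ge \alpha_i + 1$ since $R_0 \in \RPP^{\row(\alpha,\beta)}_\rho$, while for $i \notin C(\rho)$ we have $\wb_i = \beta_i > \alpha_i$ because $\alpha < \beta$. Thus $\alpha < \wb$, and Lemma~\ref{lem:initial H} yields $\ovH^{\alpha,\wb}_{\mu,\mu} = H^{\alpha,\wb}_{\mu,\mu}([n]) = 1$.

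For the inductive step, let $0 < N \le |\lm|$ and assume the statement for $N-1$. Since $\rho \ne \lm$, I would take $(r,c) \in \lm$ to be the unique cell with $\rho = \{(i,j)\in\lm : (i,j)\prec(r,c)\}$, i.e.\ the $(m+1)$-st cell in the total order $\prec$. Each RPP $R_0\cup\{k\}$ then has shape $(\lm)^{(m+1)}$, so $|\lm| - |\rho\cup\{(r,c)\}| = N-1$ and the induction hypothesis applies to it. Combining the two recurrences \eqref{eq:rec M2} and \eqref{eq:rec H} from Proposition~\ref{prop:H rec RPP R0} with the induction hypothesis gives
\[
\HH^{\alpha,\beta}_{\lambda,\mu}(R_0) = \sum_{k=a}^{\wb_r}\HH^{\alpha,\beta}_{\lambda,\mu}(R_0\cup\{k\}) = \sum_{k=a}^{\wb_r}\R^{\alpha,\beta}_{\lambda,\mu}(R_0\cup\{k\}) = \R^{\alpha,\beta}_{\lambda,\mu}(R_0),
\]
which is exactly the desired equality, so the assertion holds for $N$ and the induction is complete.

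The genuine combinatorial and determinantal content has already been packaged into Proposition~\ref{prop:H rec RPP R0} (together with the vanishing statement Lemma~\ref{lem:ss}), so no serious obstacle remains at this stage; the proof here is purely formal bookkeeping. The one point that genuinely requires care is the base case verification that $\alpha < B(R_0,\beta)$, which is exactly the inequality needed to invoke Lemma~\ref{lem:initial H} and which is the analogue of the role played by Lemma~\ref{lem:initial E} in the proof of Proposition~\ref{prop:main e}.
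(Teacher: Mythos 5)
Your proof is correct and follows essentially the same route as the paper, which simply cites the argument of Proposition~\ref{prop:main e} with Lemma~\ref{lem:initial H} and Proposition~\ref{prop:H rec RPP R0} substituted for their $e$-analogues. You also correctly identify and verify the one detail the paper leaves implicit, namely that $\alpha < B(R_0,\beta)$ so that Lemma~\ref{lem:initial H} (unlike Lemma~\ref{lem:initial E}, which needs no such hypothesis) actually applies in the base case.
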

\begin{proof}
  This can be proved by the same argument as in the proof of
  Proposition~\ref{prop:main e} where we use Lemma~\ref{lem:initial H} and
  Proposition~\ref{prop:H rec RPP R0} in place of Lemma~\ref{lem:initial E} and
  Proposition~\ref{prop:rec RPP R0}, respectively.
\end{proof}

Now we are ready to prove Theorem~\ref{thm:row_flag}, which can be restated as
follows. 

\begin{thm}\label{thm:flag h2}
  Let $\alpha,\beta\in\NN^n$ and $\mu,\lambda\in\Par_n$. 
If
  $\alpha_i\le \alpha_{i+1}$ and $\beta_i\le \beta_{i+1}$ whenever
  $\mu_i<\lambda_{i+1}$, then
\[
H_{\lambda,\mu}^{\alpha,\beta} = \R_{\lambda,\mu}^{\alpha,\beta}(\emptyset).
\]
\end{thm}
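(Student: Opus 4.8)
The plan is to follow the template of the proof of Theorem~\ref{thm:flag e2}, restating the claim as $H_{\lambda,\mu}^{\alpha,\beta} = \R_{\lambda,\mu}^{\alpha,\beta}(\emptyset)$ and peeling off cases by induction on $n$ until I reach the situation $\alpha,\beta\in\RPar_n$, $\alpha<\beta$, $\mu<\lambda$ handled by Proposition~\ref{prop:main h}. The two genuinely new features, compared with the column-flagged case, will be that the plethystic alphabet now depends on the row indices (so block-splitting shifts the $t$-variables) and that Proposition~\ref{prop:main h} produces $\ovH$ rather than $H$, so at the very end I must pass from $\ovH$ to $H$ via Lemma~\ref{lem:H(1..r)=H}.

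I would first dispose of the degenerate cases where both sides vanish. If $\mu\not\subseteq\lambda$, then $H_{\lambda,\mu}^{\alpha,\beta}=0$ by Lemma~\ref{lem:H=0} and $\R_{\lambda,\mu}^{\alpha,\beta}(\emptyset)=0$ by definition, so I may assume $\mu\subseteq\lambda$. Once the reductions below have forced $\mu<\lambda$ with $\mu_k<\lambda_{k+1}$ for every $k$ — at which point the hypothesis yields $\alpha,\beta\in\RPar_n$ — a remaining index with $\alpha_k\ge\beta_k$ makes $H_{\lambda,\mu}^{\alpha,\beta}=0$ by Lemma~\ref{lem:HC=0 mu<la}, while $\R_{\lambda,\mu}^{\alpha,\beta}(\emptyset)=0$ because row $k$ is nonempty but its flag interval is empty.

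The heart of the reduction is the two block-splittings. If $\lambda_k=\mu_k$ for some $k$, then for $k\le i\le n$ and $1\le j\le k$ one has $\lambda_i-i-\mu_j+j\le0$ with equality only when $i=j=k$, so $h_{\lambda,\mu}^{\alpha,\beta}(i,j)=\chi(i=j=k)$ and Lemma~\ref{lem:det=det1det} splits the determinant at $k$; if instead $\mu_k\ge\lambda_{k+1}$, the lower-left entries vanish and Lemma~\ref{lem:det=detdet} splits it. In both cases $\lm$ decomposes into a top block and a bottom block occupying \emph{disjoint} sets of columns (using $\mu_{k-1}\ge\mu_k=\lambda_k\ge\lambda_{k+1}$ in the first case and $\mu_k\ge\lambda_{k+1}$ in the second), so the row-flag constraints decouple and, since the $x$-part of $\wt$ counts columns and the $t$-part counts vertical ties within a block, $\wt$ is multiplicative across the two blocks; hence $\R_{\lambda,\mu}^{\alpha,\beta}(\emptyset)$ factors as the product of the two block generating functions.

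The main obstacle, and the point where the argument departs from Theorem~\ref{thm:flag e2}, is matching the bottom blocks (here the superscripts $(1),(2)$ denote the restrictions to the top and bottom index blocks, as in the proof of Theorem~\ref{thm:flag e2}). In the column-flagged case the alphabet $T_{\lambda_i-1}-T_{\mu_j}$ depends only on part sizes, so translating the bottom block down leaves it unchanged; here the alphabet $T_{i-1}-T_{j-1}$ depends on the row indices, and translating rows $k+1,\dots,n$ down to rows $1,\dots,n-k$ shifts every $t$-index by $k$. I would record this as follows: the $\delta^k$-bijection satisfies $\wt(\delta^k R)=\phi_k(\wt(R))$, where $\phi_k\colon t_i\mapsto t_{i+k}$ fixes the $x_i$, and the bottom block of the matrix is exactly $\phi_k\bigl(H_{\lambda^{(2)},\mu^{(2)}}^{\alpha^{(2)},\beta^{(2)}}\bigr)$. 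Since the inductive identity for $\Par_{n-k}$ is an identity in $\mathbb{Q}[[x,t]]$, applying the ring map $\phi_k$ to it yields the equality of the bottom blocks, while the top blocks agree by induction directly; this completes the two factorization cases. Finally, in the remaining case $\mu<\lambda$, $\alpha<\beta$, $\alpha,\beta\in\RPar_n$, Proposition~\ref{prop:main h} with $R_0=\emptyset$ gives $\R_{\lambda,\mu}^{\alpha,\beta}(\emptyset)=\HH_{\lambda,\mu}^{\alpha,\beta}(\emptyset)=\ovH_{\lambda,\mu}^{\alpha,\beta}$, and Lemma~\ref{lem:H(1..r)=H} identifies $\ovH_{\lambda,\mu}^{\alpha,\beta}=H_{\lambda,\mu}^{\alpha,\beta}([n])$ with $H_{\lambda,\mu}^{\alpha,\beta}$, finishing the proof.
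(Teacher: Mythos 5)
Your proposal is correct and follows essentially the same route as the paper's proof: the same reduction through $\mu\not\subseteq\lambda$, the two block-splittings via Lemmas~\ref{lem:det=det1det} and \ref{lem:det=detdet} with the $t$-shifting homomorphism applied to the bottom block, the vanishing case $\alpha_k\ge\beta_k$ via Lemma~\ref{lem:HC=0 mu<la}, and the final passage $H_{\lambda,\mu}^{\alpha,\beta}=H_{\lambda,\mu}^{\alpha,\beta}([n])=\ovH_{\lambda,\mu}^{\alpha,\beta}=\HH_{\lambda,\mu}^{\alpha,\beta}(\emptyset)=\R_{\lambda,\mu}^{\alpha,\beta}(\emptyset)$ via Lemma~\ref{lem:H(1..r)=H} and Proposition~\ref{prop:main h}. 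You correctly identified both points where this argument diverges from the column-flagged case, namely the row-index dependence of the alphabet $T_{i-1}-T_{j-1}$ and the need to convert $\ovH$ back to $H$.
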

\begin{proof}
  As we did in the proof of Theorem~\ref{thm:flag e2} we will successively
  reduce the cases so that we eventually have the assumptions
  $\alpha,\beta\in\RPar_n$, $\alpha<\beta$ and $\mu<\lambda$ in
  Proposition~\ref{prop:main h}. For a diagram $\sigma$, we denote by
  $\delta(\sigma)$ the diagram obtained by translating $\sigma$ down by one row,
  so that $\delta^k(\sigma)=\{(i+k,j): (i,j)\in \sigma\}$ for all $k\ge0$. Let
  $\phi$ be the shifting operator on $\mathbb{Q}[[x_1,x_2,\dots,t_1,t_2,\dots]]$
  replacing each variable $t_i$ by $t_{i+1}$. Then $\phi^k$ is an algebra
  homomorphism and it sends $T_{i-1} - T_{j-1}$ to $T_{i+k-1} - T_{j+k-1}$ for
  all positive integers $i, j, k$. Note that there is a canonical bijection
  between the RPPs $R$ of shape $\sigma$ and the RPPs $R'$ of shape
  $\delta^k(\sigma)$, and that this bijection satisfies $ \wt(R') =
  \phi^k(\wt(R))$.

  If $\mu\not\subseteq\lambda$, both sides are zero by Lemma~\ref{lem:H=0} and
  the definition of $\R_{\lambda,\mu}^{\alpha,\beta}(\emptyset)$. Hence we may
  assume $\mu\subseteq\lambda$. Thus, either $\lambda_k = \mu_k$ for some $1 \le
  k \le n$, or $\mu < \lambda$.

  Suppose that $\lambda_k=\mu_k$ for some $1\le k\le n$. Then for $k\le i\le n$
  and $1\le j\le k$, we have $\lambda_i-i-\mu_j+j\le \lambda_k-k-\mu_k+k=0$,
  where the equality holds if and only if $i=j=k$. Thus
  \[
    h^{\alpha,\beta}_{\lambda,\mu}(i,j) =
    h_{\lambda_i-i-\mu_j+j}[X_{(\alpha_j,\beta_i]}+T_{i-1}-T_{j-1}] = \chi(i=j=k).
  \]
  By Lemma~\ref{lem:det=det1det}, this implies
  \[
    H_{\lambda,\mu}^{\alpha,\beta} = H_{\lambda^{(1)},\mu^{(1)}}^{\alpha^{(1)},\beta^{(1)}}
    \phi^k\left(H_{\lambda^{(2)},\mu^{(2)}}^{\alpha^{(2)},\beta^{(2)}}\right),
  \]
  where $\gamma^{(1)}=(\gamma_1,\dots,\gamma_{k-1})$ and
  $\gamma^{(2)}=(\gamma_{k+1},\dots,\gamma_{n})$ for each
  $\gamma\in\{\alpha,\beta,\lambda,\mu\}$. The definition of
  $\R_{\lambda,\mu}^{\alpha,\beta}(\emptyset)$ immediately gives
  \[
    \R_{\lambda,\mu}^{\alpha,\beta}(\emptyset) =
    \R_{\lambda^{(1)},\mu^{(1)}}^{\alpha,\beta}(\emptyset)
    \R_{\delta^k(\lambda^{(2)}),\delta^k(\mu^{(2)})}^{\alpha,\beta}(\emptyset)
    =\R_{\lambda^{(1)},\mu^{(1)}}^{\alpha^{(1)},\beta^{(1)}}(\emptyset)
    \phi^k\left(\R_{\lambda^{(2)},\mu^{(2)}}^{\alpha^{(2)},\beta^{(2)}}(\emptyset)\right)
  \]
  because $\lm$ is the disjoint union of $\lambda^{(1)}/\mu^{(1)}$ and
  $\delta^k(\lambda^{(2)}/\mu^{(2)})$. Hence, by induction, it suffices to
  consider the case $\mu<\lambda$.

  Suppose that there is an integer $k\in[n-1]$ such that $\mu_k\ge
  \lambda_{k+1}$. Then we have
  \[
    \R_{\lambda,\mu}^{\alpha,\beta}(\emptyset) =
    \R_{\lambda^{(1)},\mu^{(1)}}^{\alpha,\beta}(\emptyset)
    \R_{\delta^k(\lambda^{(2)}),\delta^k(\mu^{(2)})}^{\alpha,\beta}(\emptyset)=
    \R_{\lambda^{(1)},\mu^{(1)}}^{\alpha^{(1)},\beta^{(1)}}(\emptyset)
    \phi^k\left(\R_{\lambda^{(2)},\mu^{(2)}}^{\alpha^{(2)},\beta^{(2)}}(\emptyset)\right),
  \]
  where $\gamma^{(1)}=(\gamma_1,\dots,\gamma_k)$ and
  $\gamma^{(2)}=(\gamma_{k+1},\dots,\gamma_n)$ for each
  $\gamma\in\{\alpha,\beta,\lambda,\mu\}$, because $\lm$ is the disjoint union
  of $\lambda^{(1)}/\mu^{(1)}$ and $\delta^k(\lambda^{(2)}/\mu^{(2)})$. For all
  $k+1\le i\le n$ and $1\le j\le k$, we have
  $h^{\alpha,\beta}_{\lambda,\mu}(i,j)=0$ because
 \[
    \lambda_i-i-\mu_j+j \le \lambda_{k+1}-(k+1)-\mu_k+k <0.
  \]
  By Lemma~\ref{lem:det=detdet}, this implies
  \[
    H_{\lambda,\mu}^{\alpha,\beta} = H_{\lambda^{(1)},\mu^{(1)}}^{\alpha^{(1)},\beta^{(1)}}
    \phi^k\left(H_{\lambda^{(2)},\mu^{(2)}}^{\alpha^{(2)},\beta^{(2)}}\right).
  \]
  Thus, by induction, we may assume $\mu_k<\lambda_{k+1}$ for all $k\in[n-1]$.
  In this case by assumption we have $\alpha,\beta\in\RPar_n$.

  Suppose that $\alpha_k\ge\beta_k$ for some $1\le k\le n$. Then by
  Lemma~\ref{lem:HC=0 mu<la} with $C=\emptyset$ we have
  $H_{\lambda,\mu}^{\alpha,\beta}=0$. Again, by definition,
  $\R_{\lambda,\mu}^{\alpha,\beta}(\emptyset)=0$.

  The remaining case is that $\alpha,\beta\in\RPar_n$, $\mu<\lambda$, and
  $\alpha<\beta$. In this case, by Lemma~\ref{lem:H(1..r)=H} and
  Proposition~\ref{prop:main h},
  \[
    H_{\lambda,\mu}^{\alpha,\beta} = H_{\lambda,\mu}^{\alpha,\beta}([n]) =
    \ovH_{\lambda,\mu}^{\alpha,\beta} = 
    \HH_{\lambda,\mu}^{\alpha,\beta}(\emptyset) = 
    \R_{\lambda,\mu}^{\alpha,\beta}(\emptyset),
  \]
which completes the proof. 
\end{proof}

\section*{Acknowledgments}
The author is grateful to Darij Grinberg for providing his conjecture,
Theorem~\ref{thm:col_flag3}, for fruitful discussions, and for his thorough
reading of the manuscript and providing many useful comments, which
significantly improved the presentation of this paper. The author is
particularly grateful to Darij Grinberg for the idea that improved
Theorem~\ref{thm:col_flag2}. He also thanks Travis Scrimshaw for helpful
discussions.

This work was initiated while the author was participating the 2020 program in
Algebraic and Enumerative Combinatorics at Institut Mittag-Leffler. The author
would like to thank the institute for the hospitality and Sara Billey, Petter
Br\"and\'en, Sylvie Corteel, and Svante Linusson for organizing the program.

This material is based upon work supported by the Swedish Research
Council under grant no. 2016-06596 while the author was in residence at Institut
Mittag-Leffler in Djursholm, Sweden during the winter of 2020.

The author was supported by NRF grants \#2019R1F1A1059081 and \#2016R1A5A1008055.

\end{document}